\documentclass[11pt,oneside]{amsart}
\PassOptionsToPackage{nameinlink}{cleveref}
\usepackage[T1]{fontenc}
\UseRawInputEncoding
\usepackage{color}
\usepackage{colortbl}
\usepackage{array}
\usepackage{longtable}
\usepackage{booktabs}
\usepackage{mathtools}
\usepackage{amsbsy}
\usepackage{amstext}
\usepackage{amsthm}
\usepackage{amssymb}
\usepackage{geometry}
\usepackage[bookmarks=true,bookmarksnumbered=false,bookmarksopen=false,
 breaklinks=false,pdfborder={0 0 1},backref=false,colorlinks=true]
 {hyperref}
\hypersetup{
 allcolors={[rgb]{0,0.2,0.6}}}

\makeatletter

\providecommand{\tabularnewline}{\\}

\numberwithin{equation}{section}
\numberwithin{figure}{section}

\usepackage{listings}
\usepackage{shuffle}

\usepackage{crossreftools}

\AddToHook{begindocument/end}{%
  \crefname{thm}{Theorem}{Theorems}%
  \Crefname{thm}{Theorem}{Theorems}%

  \crefname{lem}{Lemma}{Lemmas}%
  \Crefname{lem}{Lemma}{Lemmas}%
}

\makeatother

\theoremstyle{plain}
\newtheorem{thm}{\protect\theoremname}
\newtheorem{cor}[thm]{\protect\corollaryname}
\theoremstyle{definition}
\newtheorem{example}[thm]{\protect\examplename}
\newtheorem{defn}[thm]{\protect\definitionname}
\theoremstyle{plain}
\newtheorem{lem}[thm]{\protect\lemmaname}
\newtheorem{prop}[thm]{\protect\propositionname}
\newtheorem{conjecture}[thm]{\protect\conjecturename}
\theoremstyle{remark}
\newtheorem{rem}[thm]{\protect\remarkname}
\theoremstyle{plain}
\newtheorem*{thm*}{\protect\theoremname}
\newtheorem*{lem*}{\protect\lemmaname}

\usepackage{cleveref}
\AddToHook{env/cor/begin}{\crefalias{thm}{cor}}
\AddToHook{env/example/begin}{\crefalias{thm}{example}}
\AddToHook{env/defn/begin}{\crefalias{thm}{defn}}
\AddToHook{env/lem/begin}{\crefalias{thm}{lem}}
\AddToHook{env/prop/begin}{\crefalias{thm}{prop}}
\AddToHook{env/conjecture/begin}{\crefalias{thm}{conjecture}}
\AddToHook{env/rem/begin}{\crefalias{thm}{rem}}
\crefname{conjecture}{conjecture}{conjectures}
\Crefname{conjecture}{Conjecture}{Conjectures}
\crefname{cor}{corollary}{corollaries}
\Crefname{cor}{Corollary}{Corollaries}
\crefname{defn}{definition}{definitions}
\Crefname{defn}{Definition}{Definitions}
\crefname{example}{example}{examples}
\Crefname{example}{Example}{Examples}
\crefname{lem}{lemma}{lemmas}
\Crefname{lem}{Lemma}{Lemmas}
\crefname{prop}{proposition}{propositions}
\Crefname{prop}{Proposition}{Propositions}
\crefname{rem}{remark}{remarks}
\Crefname{rem}{Remark}{Remarks}
\crefname{thm}{theorem}{theorems}
\Crefname{thm}{Theorem}{Theorems}
\providecommand{\conjecturename}{Conjecture}
\providecommand{\corollaryname}{Corollary}
\providecommand{\definitionname}{Definition}
\providecommand{\examplename}{Example}
\providecommand{\lemmaname}{Lemma}
\providecommand{\propositionname}{Proposition}
\providecommand{\remarkname}{Remark}
\providecommand{\theoremname}{Theorem}

\begin{document}
\address[Minoru Hirose]{Graduate School of Science and Engineering, Kagoshima University, 1-21-35 Korimoto, Kagoshima, Kagoshima 890-0065, Japan}
\email{hirose@sci.kagoshima-u.ac.jp}
\subjclass[2020]{Primary 11M32; Secondary 05A30, 11G55, 33E20, 39A13}
\keywords{multiple $q$-zeta values, multiple $q$-polylogarithms,
confluence relations, stuffle product relations, duality relations,
$q$-difference equations, Jackson integrals}
\title{Confluence relations for $q$-analogues of multiple zeta values}
\author{Minoru Hirose}
\begin{abstract}
In this paper, we construct confluence relations for $q$-analogues
of multiple zeta values by adapting the classical construction to
the $q$-setting. Our construction uses $q$-analogues of multiple
polylogarithms depending on an auxiliary variable $z$, functional
relations arising from their $q$-differential equations, and the
(regularized) limit as $z$ tends to $1$. We prove that the resulting
family of relations contains the stuffle product relations and, assuming
a certain conjectural identity, the duality relations for both the
Bradley--Zhao and Schlesinger--Zudilin models.
\end{abstract}

\maketitle

\section{Introduction}

Understanding the relations among multiple zeta values is one of the
central problems in their study. The analogous problem for $q$-analogues
of multiple zeta values ($q$MZVs) is likewise an important theme.
Although several families of $q$-series can be used as generators
for the space of $q$MZVs (see, e.g., \cite{Bachmann_Kuehn_dimension_qMZV,Brindle_unified_qMZV,Burmester_balanced_qMZV,Singer_qMZV}),
in this paper we work with the $q$MZVs of the Schlesinger--Zudilin
(SZ) model \cite{Schlesinger_qMPL,Zudilin_MZV_relations}. The algebraic
relations and duality structures associated with this and related
models have been investigated, for example, in \cite{SZ_stuffle_duality,EbrahimiFard_Manchon_Singer_duality,Zhao_qMZV_duality}.

The confluence relations for classical multiple zeta values introduced
in \cite{Hirose_Sato_confluence} were originally formulated in terms
of iterated integrals with a moving puncture. We describe their basic
mechanism here in terms of multiple polylogarithms (MPLs). Note that
the auxiliary variable $z$ used in the present paper is the inverse
of the variable denoted by $z$ in \cite{Hirose_Sato_confluence}.
With this convention, one considers MPLs depending on $z$ and uses
their differential equations to express them as linear combinations
of MPLs of the form
\[
\operatorname{Li}_{k_{1},\ldots,k_{d}}(1,\ldots,1,z)
\]
with coefficients given by MZVs. Letting $z$ tend to $1$, with a
suitable regularization when necessary, then yields relations among
MZVs.

In this paper, we construct a $q$-analogue of this procedure. The
MPLs are replaced by $q$-analogues of multiple polylogarithms (qMPLs),
and their differential equations by $q$-differential equations. In
the same way, the relevant qMPLs are expressed in a corresponding
standard form with coefficients given by qMZVs, and letting $z$ tend
to $1$, again with regularization when necessary, yields relations
among qMZVs. We call the relations obtained in this way confluence
relations for qMZVs.

We further investigate how the confluence relations are related to
known families of relations among $q$MZVs. We prove that the stuffle
product relations are contained in the confluence relations. We also
formulate a certain conjectural identity concerning Yamamoto's $q$-duality
\cite{Yamamoto_qMPL_duality}. Assuming this identity, we show that
the duality relations for both the Bradley--Zhao (BZ) and SZ models
are contained in $I_{\mathrm{CF}}$. It is conjectured that all $\mathbb{Q}$-linear
relations among SZ $q$MZVs are obtained by the stuffle product and
SZ duality (cf. \cite[Conjecture 1.1]{SZ_stuffle_duality}). Hence,
these results provide strong evidence that the confluence relations
give all $\mathbb{Q}$-linear relations among SZ $q$MZVs.

The remainder of this paper is organized as follows. In \Cref{sec:basic_idea},
we explain the basic mechanism of confluence relations, first in the
classical setting and then in the $q$-setting. In \Cref{sec:qMZVs},
we introduce the SZ $q$MZVs and their stuffle product, and establish
a residue description of stuffle regularization. In \Cref{sec:Review_qMPL},
we review $q$MPLs and their $q$-difference formulas. In \Cref{sec:formulation_conf},
we formulate the confluence relations using formal $q$MPLs. In \Cref{sec:Conf_and_Stuffle},
we prove that the stuffle product relations are contained in the confluence
relations. In \Cref{sec:Conf_and_Duality}, we formulate a conjectural
identity and show that it implies the inclusion of the BZ and SZ duality
relations in $I_{\mathrm{CF}}$. In \Cref{sec:Further_questions},
we discuss several questions arising from our construction. \Cref{app:notation-summary}
summarizes the notation for the spaces of formal $q$MPLs, \Cref{app:Proof_of_some_claims}
contains proofs of several technical claims, and \Cref{app:Tables}
presents the table related to the conjecture stated in \Cref{sec:Conf_and_Duality}.

\subsection*{Notation}

For a subset $X\subset\mathbb{Z}$, we denote by $q^{X}$ the set
$\{q^{n}\,\mid\,n\in X\}$. We denote by $\mathcal{M}(\mathbb{C})$
the field of meromorphic functions in $z$ on $\mathbb{C}$.

Throughout this paper, we fix a complex parameter $q$ with $0<|q|<1$.
We define the $q$-differential operator
\[
D_{q}\colon\mathcal{M}(\mathbb{C})\longrightarrow\mathcal{M}(\mathbb{C})
\]
by
\[
(D_{q}f)(z)\coloneqq\frac{f(z)-f(zq)}{z}.
\]
For a meromorphic function $f(z)\in\mathcal{M}(\mathbb{C})$ having
no pole at $z=0$, we define the Jackson integral by
\[
\int f(z)\,d_{q}z\coloneqq\sum^{\infty}_{n=0}zq^{n}f(zq^{n}).
\]
Then we have
\[
D_{q}\left(\int f(z)\,d_{q}z\right)=f(z),\qquad\int(D_{q}f)(z)\,d_{q}z=f(z)-f(0)
\]
for every meromorphic function $f(z)\in\mathcal{M}(\mathbb{C})$ having
no pole at $z=0$.

\subsection*{Acknowledgements}

The author thanks Hidekazu Furusho for helpful comments on the manuscript.
This work was supported by JSPS KAKENHI Grant Number JP22K03244.

\section{\label{sec:basic_idea}The basic idea of confluence relations}

In this section, we illustrate the basic mechanism of the confluence
relations, first in the classical setting and then in the $q$-setting.

\subsection{The classical case}

We first recall the classical construction of \cite{Hirose_Sato_confluence}
in terms of MPLs. For positive integers $k_{1},\dots,k_{d}$, write

\[
\operatorname{Li}_{k_{1},\dots,k_{d}}(x_{1},\dots,x_{d})\coloneqq\sum_{0<m_{1}<\cdots<m_{d}}\frac{x^{m_{1}}_{1}\cdots x^{m_{d}}_{d}}{m^{k_{1}}_{1}\cdots m^{k_{d}}_{d}}.
\]
The iterated integrals occurring in \cite{Hirose_Sato_confluence}
can be expressed as MPLs of this form, where
\[
x_{i}\in\{1,z,z^{-1}\},\qquad x_{i}\cdots x_{d}\in\{1,z\}\quad(1\leq i\leq d)
\]
and $(k_{d},x_{d})\neq(1,1)$. Their differential equations with respect
to $z$ express their derivatives in terms of MPLs for which the sum
$k_{1}+\cdots+k_{d}$ is smaller. By solving the resulting differential
equations recursively, one can express them as linear combinations
of MPLs of the form
\[
\operatorname{Li}_{k_{1},\dots,k_{d}}(1,\dots,1,z)
\]
with coefficients given by MZVs. In the terminology of \cite{Hirose_Sato_confluence},
the relations in the word algebra underlying these functional identities
are called standard relations. Their suitably regularized limits as
$z\to1$ give rise to the confluence relations among MZVs.

We illustrate this procedure by a simple example for which the final
limiting step requires no regularization. Consider 
\[
\mathrm{Li}_{2,2}(z,1).
\]
We first recursively differentiate the $z$-dependent MPLs that appear
until no $z$-dependent MPLs remain. The relevant differential equations
are 
\begin{align*}
\frac{d}{dz}\mathrm{Li}_{2,2}(z,1) & =\frac{\mathrm{Li}_{1,2}(z,1)}{z},\\
\frac{d}{dz}\mathrm{Li}_{1,2}(z,1) & =\frac{\zeta(2)}{1-z}-\frac{\mathrm{Li}_{2}(z)}{z(1-z)},\\
\frac{d}{dz}\mathrm{Li}_{2}(z) & =\frac{\mathrm{Li}_{1}(z)}{z},\\
\frac{d}{dz}\mathrm{Li}_{1}(z) & =\frac{1}{1-z}.
\end{align*}
We now integrate these equations back in reverse order. Since all
the MPLs occurring on the left-hand sides vanish at $z=0$, we obtain
\begin{align*}
\mathrm{Li}_{1}(z) & =\int^{z}_{0}\frac{dt}{1-t}=\mathrm{Li}_{1}(z),\\
\mathrm{Li}_{2}(z) & =\int^{z}_{0}\frac{\mathrm{Li}_{1}(t)}{t}\,dt=\mathrm{Li}_{2}(z),\\
\mathrm{Li}_{1,2}(z,1) & =\int^{z}_{0}\left(\frac{\zeta(2)}{1-t}-\frac{\mathrm{Li}_{2}(t)}{t(1-t)}\right)dt\\
 & =\zeta(2)\mathrm{Li}_{1}(z)-\mathrm{Li}_{3}(z)-\mathrm{Li}_{2,1}(1,z),\\
\mathrm{Li}_{2,2}(z,1) & =\int^{z}_{0}\frac{\mathrm{Li}_{1,2}(t,1)}{t}\,dt\\
 & =\zeta(2)\mathrm{Li}_{2}(z)-\mathrm{Li}_{4}(z)-\mathrm{Li}_{2,2}(1,z).
\end{align*}
Thus we have expressed the original MPL in terms of MPLs of the form
$\mathrm{Li}_{k_{1},\dots,k_{d}}(1,\dots,1,z)$ with MZV coefficients.
Letting $z\to1$, we obtain 
\[
\zeta(2,2)=\zeta(2)^{2}-\zeta(4)-\zeta(2,2).
\]

\subsection{The $q$-analogue}

We now describe the $q$-analogue of the preceding procedure. Among
the $q$MPLs 
\[
\mathrm{Li}_{q;k_{1},\dots,k_{d}}(x_{1},\dots,x_{d})
\]
reviewed in \Cref{sec:Review_qMPL}, we consider those for which each
argument $x_{i}$ is of the form $z^{h}q^{m}$ with $h\in\{0,\pm1\}$
and $m\in\mathbb{Z}$, and which satisfy 
\[
x_{i}\cdots x_{d}\in zq^{\mathbb{Z}}\cup q^{\mathbb{Z}_{>0}}\qquad(1\leq i\leq d).
\]
For the formulation of the confluence relations, we further restrict
this family to certain subclasses. We defer their precise definitions
to \Cref{sec:formulation_conf}. Their $q$-differential equations
with respect to $z$ express their $q$-derivatives in terms of $q$MPLs
for which the sum $k_{1}+\cdots+k_{d}$ is smaller. By solving the
resulting $q$-differential equations recursively, one can express
them as linear combinations of $q$MPLs of the form
\[
\mathrm{Li}_{q;k_{1},\dots,k_{d}}(q^{n_{1}},\dots,q^{n_{d-1}},zq^{n_{d}})
\]
with coefficients given by $z$-independent $q$MPL values. Taking
the limit as $z\to1$, with regularization when necessary, we obtain
relations among SZ $q$MZVs.

We illustrate this procedure by a simple example for which the final
limiting step requires no regularization. Consider 
\[
\mathrm{Li}_{q;2,2}(zq^{2},q^{2}).
\]
We first recursively apply $D_{q}$ to the $z$-dependent $q$MPLs
that appear until no $z$-dependent $q$MPLs remain. The relevant
$q$-differential equations are 
\begin{align*}
D_{q}\mathrm{Li}_{q;2,2}(zq^{2},q^{2}) & =\frac{\mathrm{Li}_{q;1,2}(zq^{2},q^{2})}{z},\\
D_{q}\mathrm{Li}_{q;1,2}(zq^{2},q^{2}) & =\frac{q^{2}}{1-zq^{2}}\mathrm{Li}_{q;2}(q^{2})-\frac{\mathrm{Li}_{q;2}(zq^{4})}{z(1-zq^{2})},\\
D_{q}\mathrm{Li}_{q;2}(zq^{4}) & =\frac{\mathrm{Li}_{q;1}(zq^{4})}{z},\\
D_{q}\mathrm{Li}_{q;1}(zq^{4}) & =\frac{q^{4}}{1-zq^{4}}.
\end{align*}
We now integrate these equations back in reverse order using the Jackson
integral. Since
\[
\int(D_{q}f)(z)\,d_{q}z=f(z)-f(0)
\]
and all the $q$MPLs occurring on the left-hand sides vanish at $z=0$,
we obtain 
\begin{align*}
\mathrm{Li}_{q;1}(zq^{4}) & =\int\frac{q^{4}}{1-zq^{4}}\,d_{q}z=\mathrm{Li}_{q;1}(zq^{4}),\\
\mathrm{Li}_{q;2}(zq^{4}) & =\int\frac{\mathrm{Li}_{q;1}(zq^{4})}{z}\,d_{q}z=\mathrm{Li}_{q;2}(zq^{4}),\\
\mathrm{Li}_{q;1,2}(zq^{2},q^{2}) & =\int\left(\frac{q^{2}}{1-zq^{2}}\mathrm{Li}_{q;2}(q^{2})-\frac{\mathrm{Li}_{q;2}(zq^{4})}{z(1-zq^{2})}\right)d_{q}z\\
 & =\mathrm{Li}_{q;2}(q^{2})\mathrm{Li}_{q;1}(zq^{2})-\mathrm{Li}_{q;3}(zq^{4})-\mathrm{Li}_{q;2,1}(q^{2},zq^{2}),\\
\mathrm{Li}_{q;2,2}(zq^{2},q^{2}) & =\int\frac{\mathrm{Li}_{q;1,2}(zq^{2},q^{2})}{z}\,d_{q}z\\
 & =\mathrm{Li}_{q;2}(q^{2})\mathrm{Li}_{q;2}(zq^{2})-\mathrm{Li}_{q;4}(zq^{4})-\mathrm{Li}_{q;2,2}(q^{2},zq^{2}).
\end{align*}
Thus we have expressed the original $q$MPL in terms of $q$MPLs of
the form 
\[
\mathrm{Li}_{q;k_{1},\dots,k_{d}}(q^{n_{1}},\dots,q^{n_{d-1}},zq^{n_{d}})
\]
with $z$-independent $q$MPL coefficients. Letting $z\to1$ and rewriting
the resulting special values in terms of SZ $q$MZVs (see \Cref{sec:qMZVs}
for the definition), we obtain 
\[
\zeta^{\mathrm{SZ}}_{q}(2,2)=\bigl(\zeta^{\mathrm{SZ}}_{q}(2)\bigr)^{2}-\zeta^{\mathrm{SZ}}_{q}(4)-\zeta^{\mathrm{SZ}}_{q}(2,2).
\]

\section{\label{sec:qMZVs}SZ $q$MZVs}

\subsection{Definition of SZ $q$MZVs\label{subsec:SZ-qMZVs}}

For $(k_{1},\dots,k_{d})\in\mathbb{Z}^{d}_{\geq0}$ with $d=0$ or
$k_{d}\geq1$, the SZ $q$MZVs are defined by 
\[
\zeta^{\mathrm{SZ}}_{q}(k_{1},\ldots,k_{d})\coloneqq\sum_{0<m_{1}<\cdots<m_{d}}\prod^{d}_{j=1}\left(\frac{q^{m_{j}}}{1-q^{m_{j}}}\right)^{k_{j}}\in\mathbb{C}.
\]
When $d=0$, we regard the right-hand side as $1$, so $\zeta^{\mathrm{SZ}}_{q}(\emptyset)=1$.

Let $\mathbb{Q}\langle\mathcal{B}\rangle$ be the free noncommutative
algebra generated by 
\[
\mathcal{B}\coloneqq\{b_{0},b_{1},b_{2},\ldots\}.
\]
We denote the empty word by $1$. Let $\mathbb{Q}\langle\mathcal{B}\rangle^{0}$
be the subspace spanned by $1$ and all words that do not end with
$b_{0}$. Define the $\mathbb{Q}$-linear map
\[
Z^{\mathrm{SZ}}_{q}\colon\mathbb{Q}\langle\mathcal{B}\rangle^{0}\to\mathbb{C}
\]
by
\[
Z^{\mathrm{SZ}}_{q}(b_{k_{1}}\cdots b_{k_{d}})\coloneqq\zeta^{\mathrm{SZ}}_{q}(k_{1},\ldots,k_{d}).
\]
The same series defines a formal map $Z^{\mathrm{SZ}}\colon\mathbb{Q}\langle\mathcal{B}\rangle^{0}\to\mathbb{Q}[[q]]$,
and its kernel is one of the main themes of the study of $q$MZVs.

\subsection{The stuffle product for SZ $q$MZVs}

The (SZ) stuffle product 
\[
*\colon\mathbb{Q}\langle\mathcal{B}\rangle\times\mathbb{Q}\langle\mathcal{B}\rangle\longrightarrow\mathbb{Q}\langle\mathcal{B}\rangle
\]
is the $\mathbb{Q}$-bilinear map defined by 
\begin{align*}
1*u & \coloneqq u*1\coloneqq u,\\
(b_{i}u)*(b_{j}v) & \coloneqq b_{i}(u*b_{j}v)+b_{j}(b_{i}u*v)+b_{i+j}(u*v),
\end{align*}
for $u,v\in\mathbb{Q}\langle\mathcal{B}\rangle$ and $i,j\geq0$.
By definition, $u*v\in\mathbb{Q}\langle\mathcal{B}\rangle^{0}$ for
$u,v\in\mathbb{Q}\langle\mathcal{B}\rangle^{0}$. Then, for all $u,v\in\mathbb{Q}\langle\mathcal{B}\rangle^{0}$,
we have 
\[
Z^{\mathrm{SZ}}_{q}(u*v)=Z^{\mathrm{SZ}}_{q}(u)Z^{\mathrm{SZ}}_{q}(v).
\]
Note that we have
\[
\bigl(\mathbb{Q}\langle\mathcal{B}\rangle,*\bigr)\simeq\bigl(\mathbb{Q}\langle\mathcal{B}\rangle^{0},*\bigr)[T]\quad;\quad b_{0}\mapsto T.
\]
Thus, every $u\in\mathbb{Q}\langle\mathcal{B}\rangle$ can be uniquely
expressed as
\[
u=u_{0}+u_{1}*b_{0}+\cdots+u_{k}*b^{*k}_{0}\qquad(u_{j}\in\mathbb{Q}\langle\mathcal{B}\rangle^{0}).
\]
We define the map
\[
\operatorname{reg}_{*}\colon\mathbb{Q}\langle\mathcal{B}\rangle\to\mathbb{Q}\langle\mathcal{B}\rangle^{0}
\]
by 
\[
\operatorname{reg}_{*}(u)\coloneqq u_{0}.
\]

\subsection{SZ $q$MPLs and the residue formula}

For $d\geq0$ and $k_{1},\dots,k_{d}\in\mathbb{Z}_{\geq0}$, define
the $q$MPLs of the SZ model by
\[
\mathrm{Li}^{\mathrm{SZ}}_{q;k_{1},\dots,k_{d}}(z)\coloneqq\sum_{0=m_{0}<m_{1}<\cdots<m_{d}}z^{m_{d}}\prod^{d}_{j=1}\left(\frac{q^{m_{j}}}{1-q^{m_{j}}}\right)^{k_{j}}.
\]
By definition, $\mathrm{Li}^{\mathrm{SZ}}_{q;\emptyset}(z)=1$. For
$d\geq1$, this defining series converges absolutely for $|z|<|q|^{-k_{d}}$.
Writing $\boldsymbol{k}=(k_{1},\dots,k_{d})\neq\emptyset$, these
functions satisfy the relations
\begin{align*}
\mathrm{Li}^{\mathrm{SZ}}_{q;\boldsymbol{k}_{\uparrow}}(z)-\mathrm{Li}^{\mathrm{SZ}}_{q;\boldsymbol{k}_{\uparrow}}(zq) & =\mathrm{Li}^{\mathrm{SZ}}_{q;\boldsymbol{k}}(zq),\\
\mathrm{Li}^{\mathrm{SZ}}_{q;\boldsymbol{k},0}(z) & =\frac{z}{1-z}\mathrm{Li}^{\mathrm{SZ}}_{q;\boldsymbol{k}}(z),
\end{align*}
where $\boldsymbol{k}_{\uparrow}\coloneqq(k_{1},\dots,k_{d-1},k_{d}+1)$.
These identities show that $\mathrm{Li}^{\mathrm{SZ}}_{q;k_{1},\dots,k_{d}}(z)$
admits a meromorphic continuation in $z$ to $\mathbb{C}$, with possible
poles at $z\in q^{\mathbb{Z}_{\leq0}}$. Define the $\mathbb{Q}$-linear
map
\[
L^{\mathrm{SZ}}_{q}\colon\mathbb{Q}\langle\mathcal{B}\rangle\to\mathcal{M}(\mathbb{C})
\]
by 
\[
L^{\mathrm{SZ}}_{q}(b_{k_{1}}\cdots b_{k_{d}})\coloneqq\mathrm{Li}^{\mathrm{SZ}}_{q;k_{1},\dots,k_{d}}(z).
\]
The following theorem may be viewed as a generalization of the identity
\[
\lim_{z\to1}L^{\mathrm{SZ}}_{q}(u)=Z^{\mathrm{SZ}}_{q}(u)
\]
for $u\in\mathbb{Q}\langle\mathcal{B}\rangle^{0}$, in which case
$L^{\mathrm{SZ}}_{q}(u)$ is holomorphic at $z=1$.
\begin{thm}
\label{thm:residue_and_stuffle_regularization}For $u\in\mathbb{Q}\langle\mathcal{B}\rangle$,
we have
\[
Z^{\mathrm{SZ}}_{q}\!\left(\operatorname{reg}_{*}(u)\right)=\operatorname*{Res}_{z=1}\frac{L^{\mathrm{SZ}}_{q}(u)(z)}{z(z-1)}\,dz.
\]
\end{thm}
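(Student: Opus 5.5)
The plan is to pass from $L^{\mathrm{SZ}}_{q}(u)$ to truncated $q$MZVs and use the fact that truncation respects the stuffle product. For a word $w=b_{k_{1}}\cdots b_{k_{d}}$ and an integer $M\geq0$, put
\[
Z_{M}(w)\coloneqq\sum_{0<m_{1}<\cdots<m_{d}\leq M}\prod_{j=1}^{d}\left(\frac{q^{m_{j}}}{1-q^{m_{j}}}\right)^{k_{j}},
\]
with $Z_{M}(1)=1$, and extend $\mathbb{Q}$-linearly to $\mathbb{Q}\langle\mathcal{B}\rangle$. Grouping the defining series of $\mathrm{Li}^{\mathrm{SZ}}_{q}$ according to the value of $m_{d}$ and multiplying by $\frac{1}{1-z}=\sum_{N\geq0}z^{N}$ gives, for $|z|<1$,
\[
\frac{L^{\mathrm{SZ}}_{q}(w)(z)}{1-z}=\sum_{M\geq0}Z_{M}(w)\,z^{M}.
\]
The usual proof of the stuffle product holds verbatim for truncated sums, so $Z_{M}(u*v)=Z_{M}(u)Z_{M}(v)$. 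Moreover $Z_{M}(b_{0})=M$.

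Next, write $u=\sum_{j=0}^{k}u_{j}*b_{0}^{*j}$ with $u_{j}\in\mathbb{Q}\langle\mathcal{B}\rangle^{0}$. The multiplicativity gives
\[
Z_{M}(u)=\sum_{j}M^{j}Z_{M}(u_{j}).
\]
For $u_{j}\in\mathbb{Q}\langle\mathcal{B}\rangle^{0}$, every word ends with $b_{k}$ where $k\geq1$. The tail $Z^{\mathrm{SZ}}_{q}(u_{j})-Z_{M}(u_{j})$ is therefore a sum over $m_{d}>M$ of terms containing at least one factor $q^{m_{d}}/(1-q^{m_{d}})$. A standard estimate gives
\[
\varepsilon_{j}(M)\coloneqq Z_{M}(u_{j})-Z^{\mathrm{SZ}}_{q}(u_{j})=O(|q|^{M}M^{c})
\]
for some $c$, where the polynomial factor comes from the $b_{0}$ letters inside $u_{j}$. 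Hence
\[
L^{\mathrm{SZ}}_{q}(u)(z)=(1-z)\sum_{j}\Bigl(Z^{\mathrm{SZ}}_{q}(u_{j})\sum_{M\geq0}M^{j}z^{M}+\sum_{M\geq0}M^{j}\varepsilon_{j}(M)z^{M}\Bigr).
\]
The second inner series converges on $|z|<|q|^{-1}$, so it defines a function $g_{j}$ that is holomorphic near $z=1$. By uniqueness of meromorphic continuation, this identity continues to hold in a neighbourhood of $z=1$ (punctured at $1$). This identification of the meromorphic continuation, together with the uniform exponential tail bound, is the step I expect to need the most care.

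Now compute the residue term by term. Dividing by $z(z-1)$, the contribution of $g_{j}$ is $-g_{j}(z)/z$, which is holomorphic at $z=1$ and has residue $0$. The remaining contribution is
\[
-Z^{\mathrm{SZ}}_{q}(u_{j})\sum_{M}M^{j}z^{M-1}.
\]
For $j=0$ this equals $-Z^{\mathrm{SZ}}_{q}(u_{0})\frac{1}{z(1-z)}$, whose residue at $z=1$ is $Z^{\mathrm{SZ}}_{q}(u_{0})$. For $j\geq1$, we have
\[
\sum_{M}M^{j}z^{M-1}=\frac{d}{dz}\sum_{M}M^{j-1}z^{M},
\]
the derivative of a rational function with a pole only at $z=1$. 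The residue of a derivative vanishes, so these terms contribute $0$. Summing over $j$ yields
\[
\operatorname*{Res}_{z=1}\frac{L^{\mathrm{SZ}}_{q}(u)(z)}{z(z-1)}\,dz=Z^{\mathrm{SZ}}_{q}(u_{0})=Z^{\mathrm{SZ}}_{q}(\operatorname{reg}_{*}(u)),
\]
which is the assertion of \Cref{thm:residue_and_stuffle_regularization}.
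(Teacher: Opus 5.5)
Your argument is correct, but it takes a different route from the paper's. The paper never introduces truncated sums. Instead it proves directly from the series, by counting the positions where $b_{0}$ can be inserted, the identity $L^{\mathrm{SZ}}_{q}(u*b_{0})(z)=z\frac{d}{dz}L^{\mathrm{SZ}}_{q}(u)(z)+\frac{z}{1-z}L^{\mathrm{SZ}}_{q}(u)(z)$. This is equivalent to $\frac{L^{\mathrm{SZ}}_{q}(u*b_{0})(z)}{z(z-1)}=\frac{d}{dz}\bigl(\frac{L^{\mathrm{SZ}}_{q}(u)(z)}{z-1}\bigr)$. Hence every summand $u_{j}*b_{0}^{*j}$ with $j\geq1$ has zero residue without any estimate, and the paper only needs that $L^{\mathrm{SZ}}_{q}(u_{0})$ is holomorphic at $z=1$ with value $Z^{\mathrm{SZ}}_{q}(u_{0})$.

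That identity is exactly the generating-function form of your relation $Z_{M}(u*b_{0})=M\,Z_{M}(u)$. Multiplying the coefficients of $\sum_{M}Z_{M}(u)z^{M}=L^{\mathrm{SZ}}_{q}(u)(z)/(1-z)$ by $M$ is the operator $z\frac{d}{dz}$. So your version replaces the paper's explicit counting by the standard finite quasi-shuffle identity.

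The cost is the tail bound and the continuation argument, and you handle both correctly. The bound $O(M^{c}|q|^{M})$ follows from $|q^{m}/(1-q^{m})|\leq|q|^{m}/(1-|q|)$ together with the polynomial number of index tuples. Agreement on $|z|<1$ then extends to the disc $|z|<|q|^{-1}$ by the identity theorem.

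In return you get more than the residue. You obtain the whole polar part, $L^{\mathrm{SZ}}_{q}(u)(z)=(1-z)\sum_{j}Z^{\mathrm{SZ}}_{q}(u_{j})\sum_{M}M^{j}z^{M}+(1-z)h(z)$ with $h$ holomorphic near $z=1$. Equivalently, $Z_{M}(u)=\sum_{j}Z^{\mathrm{SZ}}_{q}(u_{j})M^{j}+O(M^{c}|q|^{M})$. This characterizes $\operatorname{reg}_{*}(u)$ as the constant term of the polynomial asymptotics of the truncated sums, a $q$-analogue of the classical asymptotic description of harmonic regularization.
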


See \Cref{subsec:Proof_of_stuffle_residue} for the proof of \Cref{thm:residue_and_stuffle_regularization}.

\subsection{Relation to general $q$MPLs}

Let $k_{1},\dots,k_{d},n_{1},\dots,n_{d}\in\mathbb{Z}_{\geq0}$ with
$n_{i}\leq k_{i}$, and consider the series
\[
\sum_{0=m_{0}<m_{1}<\cdots<m_{d}}z^{m_{d}}\prod^{d}_{i=1}\frac{q^{m_{i}n_{i}}}{(1-q^{m_{i}})^{k_{i}}}.
\]
When $d\geq1$ and $k_{1},\dots,k_{d}\geq1$, this series is the specialization
\[
\mathrm{Li}_{q;k_{1},\dots,k_{d}}(q^{n_{1}},\dots,q^{n_{d-1}},zq^{n_{d}})
\]
of the general $q$MPLs reviewed in the next section. We now express
it in terms of the SZ $q$MPLs. For each $i$, the binomial identity
\begin{align*}
\frac{q^{m_{i}n_{i}}}{(1-q^{m_{i}})^{k_{i}}} & =\left(1+\frac{1-q^{m_{i}}}{q^{m_{i}}}\right)^{k_{i}-n_{i}}\left(\frac{q^{m_{i}}}{1-q^{m_{i}}}\right)^{k_{i}}\\
 & =\sum^{k_{i}-n_{i}}_{s_{i}=0}\binom{k_{i}-n_{i}}{s_{i}}\left(\frac{q^{m_{i}}}{1-q^{m_{i}}}\right)^{k_{i}-s_{i}}
\end{align*}
expresses the $i$-th factor as a $\mathbb{Q}$-linear combination
of the factors occurring in the SZ model. Accordingly, define
\[
F(k_{1},\dots,k_{d};n_{1},\dots,n_{d})\coloneqq\sum^{k_{1}-n_{1}}_{s_{1}=0}\cdots\sum^{k_{d}-n_{d}}_{s_{d}=0}\left(\prod^{d}_{i=1}\binom{k_{i}-n_{i}}{s_{i}}\right)b_{k_{1}-s_{1}}\cdots b_{k_{d}-s_{d}}.
\]
Then
\[
\sum_{0<m_{1}<\cdots<m_{d}}z^{m_{d}}\prod^{d}_{i=1}\frac{q^{m_{i}n_{i}}}{(1-q^{m_{i}})^{k_{i}}}=L^{\mathrm{SZ}}_{q}(F(k_{1},\dots,k_{d};n_{1},\dots,n_{d})).
\]

\section{\label{sec:Review_qMPL}Review of $q$MPLs}

\subsection{Definition of $q$MPLs}

Following \cite{Zhao_qMPL}, we define $q$MPLs as follows. For $d\geq0$,
complex variables $x_{1},\dots,x_{d}$ and positive integers $k_{1},\dots,k_{d}$,
define the $q$MPL by
\[
\mathrm{Li}_{q;k_{1},\dots,k_{d}}(x_{1},\dots,x_{d})\coloneqq\sum_{0<m_{1}<\cdots<m_{d}}\frac{x^{m_{1}}_{1}\cdots x^{m_{d}}_{d}}{(1-q^{m_{1}})^{k_{1}}\cdots(1-q^{m_{d}})^{k_{d}}}.
\]
By definition, $\mathrm{Li}_{q;\emptyset}(\emptyset)=1$. This series
converges absolutely for $|x_{i}\cdots x_{d}|<1$ ($1\leq i\leq d$)
and admits a meromorphic continuation to $\mathbb{C}^{d}$ with possible
poles along
\[
x_{i}\cdots x_{d}\in q^{\mathbb{Z}_{\leq0}}\qquad(1\leq i\leq d).
\]

\subsection{$q$-difference formulas}

Let us recall the $q$-difference formulas for $q$MPLs from \cite{Zhao_qMPL}.
\begin{thm}[\cite{Zhao_qMPL}]
\label{thm:q-difference_of_qMPL}Let $j\in\{1,\dots,d\}$. If $k_{j}>1$,
then we have
\begin{align*}
 & \mathrm{Li}_{q;k_{1},\dots,k_{d}}(x_{1},\dots,x_{d})-\mathrm{Li}_{q;k_{1},\dots,k_{d}}(x_{1},\dots,qx_{j},\dots,x_{d})\\
 & =\mathrm{Li}_{q;k_{1},\dots,k_{j}-1,\dots,k_{d}}(x_{1},\dots,x_{d}).
\end{align*}
If $k_{j}=1$, then, assuming that $x_{j}\neq1$, we have
\begin{align*}
 & \mathrm{Li}_{q;k_{1},\dots,k_{d}}(x_{1},\dots,x_{d})-\mathrm{Li}_{q;k_{1},\dots,k_{d}}(x_{1},\dots,qx_{j},\dots,x_{d})\\
 & =\frac{x_{j}}{1-x_{j}}\mathrm{Li}_{q;k_{1},\dots,\widehat{k_{j}},\dots,k_{d}}(x_{1},\dots,x_{j-1}x_{j},\dots,x_{d})\\
 & \quad-\frac{1}{1-x_{j}}\mathrm{Li}_{q;k_{1},\dots,\widehat{k_{j}},\dots,k_{d}}(x_{1},\dots,x_{j}x_{j+1},\dots,x_{d}).
\end{align*}
Here, we regard the first term as $\frac{x_{1}}{1-x_{1}}\mathrm{Li}_{q;k_{2},\dots,k_{d}}(x_{2},\dots,x_{d})$
when $j=1$, and the second term as $0$ when $j=d$.
\end{thm}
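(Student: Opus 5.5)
We prove the two identities of \Cref{thm:q-difference_of_qMPL} by a direct computation on the defining series in the domain of absolute convergence, and then extend them to all $x_{1},\dots,x_{d}$ by meromorphic continuation. Fix $j$ and work where both $(x_{1},\dots,x_{d})$ and $(x_{1},\dots,qx_{j},\dots,x_{d})$ lie in the region $|x_{i}\cdots x_{d}|<1$. Since $|q|<1$, replacing $x_{j}$ by $qx_{j}$ only shrinks the products $x_{i}\cdots x_{d}$ for $i\leq j$, so the first region suffices. In this region both sides are absolutely convergent power series, so termwise manipulation is legitimate. Both sides are meromorphic on $\mathbb{C}^{d}$, so the identities then hold identically.

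For the case $k_{j}>1$, subtract termwise. The summand indexed by $0<m_{1}<\cdots<m_{d}$ in the difference is
\[
\frac{x_{1}^{m_{1}}\cdots x_{d}^{m_{d}}\,(1-q^{m_{j}})}{(1-q^{m_{1}})^{k_{1}}\cdots(1-q^{m_{d}})^{k_{d}}}.
\]
The factor $1-q^{m_{j}}$ lowers the exponent $k_{j}$ by one, and since $k_{j}-1\geq1$ the result is again a $q$MPL of the stated form. This case is immediate.

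For the case $k_{j}=1$, the same subtraction cancels the denominator $(1-q^{m_{j}})$ entirely. The difference becomes
\[
\sum_{0<m_{1}<\cdots<m_{d}}\Bigl(\prod_{i\neq j}\frac{x_{i}^{m_{i}}}{(1-q^{m_{i}})^{k_{i}}}\Bigr)x_{j}^{m_{j}},
\]
with $m_{j}$ now appearing only through $x_{j}^{m_{j}}$, subject to $m_{j-1}<m_{j}<m_{j+1}$. Perform the inner geometric sum over $m_{j}$, with the conventions $m_{0}=0$ and the upper limit $\infty$ when $j=d$:
\[
\sum_{m_{j-1}<m_{j}<m_{j+1}}x_{j}^{m_{j}}=\frac{x_{j}^{m_{j-1}+1}-x_{j}^{m_{j+1}}}{1-x_{j}},
\]
which requires $x_{j}\neq1$. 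The first piece contributes $\frac{x_{j}}{1-x_{j}}x_{j}^{m_{j-1}}$, which merges $x_{j}$ into $x_{j-1}$. The second piece contributes $-\frac{1}{1-x_{j}}x_{j}^{m_{j+1}}$, which merges $x_{j}$ into $x_{j+1}$. This gives exactly the two stated $q$MPLs of depth $d-1$, and the boundary cases reproduce the stated conventions:
\begin{itemize}
\item for $j=1$, $x_{j}^{m_{0}}=1$, giving $\frac{x_{1}}{1-x_{1}}\mathrm{Li}_{q;k_{2},\dots,k_{d}}(x_{2},\dots,x_{d})$;
\item for $j=d$, the term $x_{d}^{m_{d+1}}$ tends to $0$ because $|x_{d}|<1$, so the second term vanishes.
\end{itemize}

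The only point needing care is justifying the rearrangement and the geometric summation. When $j<d$ we may have $|x_{j}|\geq1$; the finite geometric sum is still valid as long as $x_{j}\neq1$. Absolute convergence of the two resulting series is the point to check: it follows because the products $x_{i}\cdots x_{d}$ for the new argument lists coincide with products already assumed to have modulus less than $1$. For example, $(x_{j-1}x_{j})x_{j+1}\cdots x_{d}=x_{j-1}\cdots x_{d}$. After this check, the identity of meromorphic functions extends from this open region to all of $\mathbb{C}^{d}$ off the polar loci, by uniqueness of meromorphic continuation.
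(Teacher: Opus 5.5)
Your proof is correct and complete. The termwise subtraction, the inner geometric sum over $m_j$, the tail-product check for absolute convergence, and the extension by uniqueness of meromorphic continuation all go through. One detail is worth stating explicitly: the closed form $\frac{x_j^{m_{j-1}+1}-x_j^{m_{j+1}}}{1-x_j}$ also returns $0$ in the empty case $m_{j+1}=m_{j-1}+1$, which is what lets each of your two pieces run over the full index set $m_{j-1}<m_{j+1}$ of a depth-$(d-1)$ $q$MPL. The paper gives no proof of this statement and only cites Zhao, so there is nothing to compare against; your direct computation on the defining series is the standard argument for it.
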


\begin{cor}
\label{cor:Jackson_integral_qMPL}Let $d\geq1$. When $x_{1},\dots,x_{d}$
are constant with respect to $z$, we have
\begin{align*}
\int\frac{\mathrm{Li}_{q;k_{1},\dots,k_{d}}(x_{1},\dots,x_{d-1},x_{d}z)}{z}d_{q}z & =\mathrm{Li}_{q;k_{1},\dots,k_{d-1},k_{d}+1}(x_{1},\dots,x_{d-1},x_{d}z),\\
\int\frac{\mathrm{Li}_{q;k_{1},\dots,k_{d}}(x_{1},\dots,x_{d-1},x_{d}z)}{z-a^{-1}}d_{q}z & =-\mathrm{Li}_{q;k_{1},\dots,k_{d},1}(x_{1},\dots,x_{d-1},x_{d}a^{-1},az),\\
\int\frac{1}{z-a^{-1}}d_{q}z & =-\mathrm{Li}_{q;1}(az),
\end{align*}
where $a\in\mathbb{C}^{\times}$.
\end{cor}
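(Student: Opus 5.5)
The plan is to reduce all three identities to the fundamental property of the Jackson integral recalled in the Notation section,
\[
\int(D_{q}f)(z)\,d_{q}z=f(z)-f(0),
\]
valid for $f\in\mathcal{M}(\mathbb{C})$ with no pole at $z=0$. For each identity, let $f$ denote the right-hand side. We verify three things: (i) $f$ is meromorphic and holomorphic at $z=0$ with $f(0)=0$; (ii) $D_{q}f$ equals the integrand; (iii) the integrand has no pole at $z=0$, so that its Jackson integral is defined.

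For (i), every $q$MPL appearing on a right-hand side has last argument proportional to $z$. Its defining series $\sum z^{m}(\cdots)$ runs over $m\geq1$, so it is holomorphic near $z=0$ and vanishes there. Meromorphy on $\mathbb{C}$ comes from the continuation recalled in \Cref{sec:Review_qMPL}. Condition (iii) then follows from (ii): $D_{q}f=(f(z)-f(qz))/z$ is holomorphic at $0$ because $f$ vanishes at $0$. One can also see it directly: $\mathrm{Li}_{q;\boldsymbol{k}}(\dots,x_{d}z)/z$ is holomorphic at $0$ by the same vanishing, and $1/(z-a^{-1})$ is holomorphic at $0$ since $a\neq0$.

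The core of the argument is (ii), which is a direct application of \Cref{thm:q-difference_of_qMPL} with $j$ equal to the last index, so that the second term on its right-hand side vanishes.

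\emph{First identity.} Apply the theorem with $j=d$ to $\mathrm{Li}_{q;k_{1},\dots,k_{d}+1}$ with last argument $x_{d}z$. Since $k_{d}+1>1$, the difference $f(z)-f(qz)$ equals $\mathrm{Li}_{q;k_{1},\dots,k_{d}}(x_{1},\dots,x_{d}z)$. Dividing by $z$ gives the integrand.

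\emph{Second identity.} Apply the theorem to the depth-$(d+1)$ function with $j=d+1$, $k_{d+1}=1$ and $x_{d+1}=az$. The first term merges the last two arguments: $x_{d}a^{-1}\cdot az=x_{d}z$. Hence
\[
f(z)-f(qz)=-\frac{az}{1-az}\,\mathrm{Li}_{q;k_{1},\dots,k_{d}}(x_{1},\dots,x_{d-1},x_{d}z),
\]
and dividing by $z$ gives $-\frac{a}{1-az}\mathrm{Li}_{q;\boldsymbol{k}}=\frac{1}{z-a^{-1}}\mathrm{Li}_{q;\boldsymbol{k}}$.

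\emph{Third identity.} This is the depth-one case with $\mathrm{Li}_{q;\emptyset}=1$. The theorem gives $\mathrm{Li}_{q;1}(az)-\mathrm{Li}_{q;1}(qaz)=\frac{az}{1-az}$, hence $D_{q}(-\mathrm{Li}_{q;1}(az))=\frac{1}{z-a^{-1}}$.

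The hypothesis $x_{j}\neq1$, here $az\neq1$, excludes only a discrete set of $z$. Both sides are meromorphic in $z$, so the difference identities hold as identities in $\mathcal{M}(\mathbb{C})$.

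I expect no serious obstacle. The only point requiring care is the bookkeeping in the $k_{j}=1$ case of \Cref{thm:q-difference_of_qMPL}: correctly identifying the merged argument $x_{d}a^{-1}\cdot az$, and noting that the second term is absent because $j$ is the last index. A minor additional point is that the Jackson integral, defined as a series, agrees with the meromorphic identity. This is exactly what the cited property $\int D_{q}f\,d_{q}z=f-f(0)$ provides, since the series $\sum_{n}(f(zq^{n})-f(zq^{n+1}))$ telescopes to $f(z)-\lim_{n\to\infty}f(zq^{n})=f(z)-f(0)$ by continuity of $f$ at $0$.
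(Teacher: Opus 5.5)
Your proof is correct and is essentially the argument the paper intends: the statement is recorded as an immediate corollary of \Cref{thm:q-difference_of_qMPL}, applied at the last position so that the second term drops out. Combining this with $\int (D_q f)\,d_q z = f(z)-f(0)$ and the vanishing of each right-hand side at $z=0$ gives all three identities, and your computations, including $-\frac{a}{1-az}=\frac{1}{z-a^{-1}}$, check out.
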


\begin{example}
We give several examples illustrating \Cref{thm:q-difference_of_qMPL}.
\begin{align*}
 & \mathrm{Li}_{q;k_{1},k_{2},k_{3},k_{4},k_{5}}(x_{1},x_{2},x_{3},x_{4},x_{5})-\mathrm{Li}_{q;k_{1},k_{2},k_{3},k_{4},k_{5}}(x_{1},x_{2},qx_{3},x_{4},x_{5})\\
 & =\mathrm{Li}_{q;k_{1},k_{2},k_{3}-1,k_{4},k_{5}}(x_{1},x_{2},x_{3},x_{4},x_{5})\qquad(k_{3}>1).
\end{align*}
\begin{align*}
 & \mathrm{Li}_{q;k_{1},k_{2},1,k_{4},k_{5}}(x_{1},x_{2},x_{3},x_{4},x_{5})-\mathrm{Li}_{q;k_{1},k_{2},1,k_{4},k_{5}}(x_{1},x_{2},qx_{3},x_{4},x_{5})\\
 & =\frac{x_{3}}{1-x_{3}}\mathrm{Li}_{q;k_{1},k_{2},k_{4},k_{5}}(x_{1},x_{2}x_{3},x_{4},x_{5})-\frac{1}{1-x_{3}}\mathrm{Li}_{q;k_{1},k_{2},k_{4},k_{5}}(x_{1},x_{2},x_{3}x_{4},x_{5}).
\end{align*}
\begin{align*}
 & \mathrm{Li}_{q;1,k_{2},k_{3},k_{4},k_{5}}(x_{1},x_{2},x_{3},x_{4},x_{5})-\mathrm{Li}_{q;1,k_{2},k_{3},k_{4},k_{5}}(qx_{1},x_{2},x_{3},x_{4},x_{5})\\
 & =\frac{x_{1}}{1-x_{1}}\mathrm{Li}_{q;k_{2},k_{3},k_{4},k_{5}}(x_{2},x_{3},x_{4},x_{5})-\frac{1}{1-x_{1}}\mathrm{Li}_{q;k_{2},k_{3},k_{4},k_{5}}(x_{1}x_{2},x_{3},x_{4},x_{5}).
\end{align*}
\begin{align*}
 & \mathrm{Li}_{q;k_{1},k_{2},k_{3},k_{4},1}(x_{1},x_{2},x_{3},x_{4},x_{5})-\mathrm{Li}_{q;k_{1},k_{2},k_{3},k_{4},1}(x_{1},x_{2},x_{3},x_{4},qx_{5})\\
 & =\frac{x_{5}}{1-x_{5}}\mathrm{Li}_{q;k_{1},k_{2},k_{3},k_{4}}(x_{1},x_{2},x_{3},x_{4}x_{5}).
\end{align*}
\end{example}

\begin{example}
\label{exa:q_diff}In this paper, we consider one-variable specializations
of $q$MPLs, such as
\[
f(z)\coloneqq\mathrm{Li}_{q;3,5}(z^{-1},zq^{2}).
\]
The $q$-difference $f(z)-f(zq)$ can be computed by applying \Cref{thm:q-difference_of_qMPL}
successively to the two arguments. If we replace the first argument
before the second, then
\begin{align*}
f(z)-f(zq) & =\mathrm{Li}_{q;3,5}(z^{-1},zq^{2})-\mathrm{Li}_{q;3,5}(z^{-1}q^{-1},zq^{3})\\
 & =\left(\mathrm{Li}_{q;3,5}(z^{-1},zq^{2})-\mathrm{Li}_{q;3,5}(z^{-1}q^{-1},zq^{2})\right)\\
 & \quad+\left(\mathrm{Li}_{q;3,5}(z^{-1}q^{-1},zq^{2})-\mathrm{Li}_{q;3,5}(z^{-1}q^{-1},zq^{3})\right)\\
 & =-\mathrm{Li}_{q;2,5}(z^{-1}q^{-1},zq^{2})+\mathrm{Li}_{q;3,4}(z^{-1}q^{-1},zq^{2}).
\end{align*}
If we replace the second argument before the first, then
\begin{align*}
f(z)-f(zq) & =\left(\mathrm{Li}_{q;3,5}(z^{-1},zq^{2})-\mathrm{Li}_{q;3,5}(z^{-1},zq^{3})\right)\\
 & \quad+\left(\mathrm{Li}_{q;3,5}(z^{-1},zq^{3})-\mathrm{Li}_{q;3,5}(z^{-1}q^{-1},zq^{3})\right)\\
 & =\mathrm{Li}_{q;3,4}(z^{-1},zq^{2})-\mathrm{Li}_{q;2,5}(z^{-1}q^{-1},zq^{3}).
\end{align*}
\end{example}

\begin{example}
\label{exa:invalid_intermediate_term}When computing a $q$-difference
by repeated application of \Cref{thm:q-difference_of_qMPL}, one must
ensure that each intermediate $q$MPL is well defined, either by its
defining series or by meromorphic continuation. For example, the following
calculation is invalid:
\begin{align*}
\mathrm{Li}_{q;1,1}(z^{-1},zq)-\mathrm{Li}_{q;1,1}(z^{-1}q^{-1},zq^{2}) & =\left(\mathrm{Li}_{q;1,1}(z^{-1},zq)-\mathrm{Li}_{q;1,1}(z^{-1}q^{-1},zq)\right)\\
 & \quad+\left(\mathrm{Li}_{q;1,1}(z^{-1}q^{-1},zq)-\mathrm{Li}_{q;1,1}(z^{-1}q^{-1},zq^{2})\right)\\
 & =-\frac{z^{-1}q^{-1}}{1-z^{-1}q^{-1}}\mathrm{Li}_{q;1}(zq)+\frac{1}{1-z^{-1}q^{-1}}\mathrm{Li}_{q;1}(1)\\
 & \quad+\frac{zq}{1-zq}\mathrm{Li}_{q;1}(1)\\
 & =\frac{1}{1-zq}\mathrm{Li}_{q;1}(zq).
\end{align*}
Indeed, the intermediate term
\[
\mathrm{Li}_{q;1,1}(z^{-1}q^{-1},zq)
\]
lies on the polar locus $x_{1}x_{2}=1$, and $\mathrm{Li}_{q;1}(1)$
is not defined. In fact, this calculation leads to an incorrect result.
A valid computation is obtained by replacing the second argument first:
\begin{align*}
\mathrm{Li}_{q;1,1}(z^{-1},zq)-\mathrm{Li}_{q;1,1}(z^{-1}q^{-1},zq^{2}) & =\left(\mathrm{Li}_{q;1,1}(z^{-1},zq)-\mathrm{Li}_{q;1,1}(z^{-1},zq^{2})\right)\\
 & \quad+\left(\mathrm{Li}_{q;1,1}(z^{-1},zq^{2})-\mathrm{Li}_{q;1,1}(z^{-1}q^{-1},zq^{2})\right)\\
 & =\frac{zq}{1-zq}\mathrm{Li}_{q;1}(q)+\frac{1}{1-zq}\mathrm{Li}_{q;1}(zq^{2})-\frac{zq}{1-zq}\mathrm{Li}_{q;1}(q)\\
 & =\frac{1}{1-zq}\mathrm{Li}_{q;1}(zq^{2})\neq\frac{1}{1-zq}\mathrm{Li}_{q;1}(zq).
\end{align*}
\end{example}

\section{\label{sec:formulation_conf}Formulation of the confluence relations}

\subsection{Formal $q$MPLs and formal single-position $q$-difference}
\begin{defn}[Formal $q$MPLs]
\label{def:formal_qMPL}We denote by $\hat{\mathcal{F}}$ the set
of formal symbols
\[
\mathbb{L}_{k_{1},\dots,k_{d}}(x_{1},\dots,x_{d}),
\]
where $d\geq0$, $k_{1},\dots,k_{d}\in\mathbb{Z}_{\geq1}$, and
\[
x_{i}\in\{z^{h}q^{m}\mid h\in\{0,\pm1\},m\in\mathbb{Z}\}\qquad(1\leq i\leq d),
\]
satisfying
\begin{equation}
x_{i}\cdots x_{d}\in zq^{\mathbb{Z}}\cup q^{\mathbb{Z}_{>0}}\qquad(1\leq i\leq d).\label{eq:condition_qMPL}
\end{equation}
We write the case $d=0$ as $\mathbb{L}(\emptyset)$. We call the
elements of $\hat{\mathcal{F}}$ formal $q$MPLs. We call $k_{1}+\cdots+k_{d}$
the weight. We also call $k_{i}$ the weight of the $i$-th position.
In this section, we introduce several subsets of $\hat{\mathcal{F}}$.
Although each subset is defined when it first appears, their definitions
and inclusion relations are collected in \Cref{app:notation-summary}
for the reader's convenience. For $\mathcal{X}\subset\hat{\mathcal{F}}$,
we denote by $\mathbb{Q}\mathcal{X}$ the $\mathbb{Q}$-vector space
with basis $\mathcal{X}$.
\end{defn}

\begin{defn}[Evaluation map]
Define a $\mathbb{Q}$-linear map
\[
E\colon\mathbb{Q}\hat{\mathcal{F}}\longrightarrow\mathcal{M}(\mathbb{C})
\]
by
\[
E\!\left(\mathbb{L}_{k_{1},\dots,k_{d}}(x_{1},\dots,x_{d})\right)(z)\coloneqq\mathrm{Li}_{q;k_{1},\dots,k_{d}}\bigl(x_{1},\dots,x_{d}\bigr).
\]
\end{defn}

Since
\[
E\!\left(\mathbb{L}_{k_{1},\dots,k_{d}}(x_{1},\dots,x_{d})\right)(z)=\sum_{0=m_{0}<m_{1}<\cdots<m_{d}}\frac{\prod^{d}_{i=1}(x_{i}\cdots x_{d})^{m_{i}-m_{i-1}}}{(1-q^{m_{1}})^{k_{1}}\cdots(1-q^{m_{d}})^{k_{d}}},
\]
the condition \eqref{eq:condition_qMPL} ensures convergence when
$|z|$ is sufficiently small. Furthermore, 
\begin{equation}
E\!\left(\mathbb{L}_{k_{1},\dots,k_{d}}(x_{1},\dots,x_{d})\right)(0)=0\label{eq:eval_at_zero}
\end{equation}
unless all $x_{i}$ are $z$-independent. 

We say that $\mathbb{L}_{k_{1},\dots,k_{d}}(x_{1},\dots,x_{d})\in\hat{\mathcal{F}}$
and $\mathbb{L}_{k_{1},\dots,k_{d}}(x_{1}',\dots,x_{d}')\in\hat{\mathcal{F}}$
are adjacent if there exists $j\in\{1,\dots,d\}$ such that
\begin{itemize}
\item $x_{i}=x_{i}'$ for all $i\in\{1,\dots,d\}\setminus\{j\}$,
\item $x_{j}\in zq^{\mathbb{Z}}\cup z^{-1}q^{\mathbb{Z}}$,
\item $x_{j}/x_{j}'\in\{q^{\pm1}\}$.
\end{itemize}
Let
\[
\Omega\coloneqq\bigoplus_{b\in\{0\}\cup q^{\mathbb{Z}}}\mathbb{Q}e_{b}
\]
be the $\mathbb{Q}$-module generated by formal symbols $e_{b}$ with
$b\in\{0\}\cup q^{\mathbb{Z}}$. Motivated by \Cref{thm:q-difference_of_qMPL},
we make the following definition.
\begin{defn}[Formal single-position $q$-difference]
\label{def:D}For adjacent $u,v\in\hat{\mathcal{F}}$, define
\[
D(u,v)\in\mathbb{Q}\hat{\mathcal{F}}\otimes\Omega
\]
as follows. When
\[
u=\mathbb{L}_{k_{1},\dots,k_{d}}(x_{1},\dots,x_{d}),v=\mathbb{L}_{k_{1},\dots,k_{d}}(x_{1},\dots,qx_{j},\dots,x_{d}),
\]
we put
\[
D(u,v)\coloneqq\mathbb{L}_{k_{1},\dots,k_{j}-1,\dots,k_{d}}(x_{1},\dots,x_{d})\otimes e_{0}
\]
for $k_{j}>1$, and
\begin{align*}
D(u,v) & \coloneqq-\mathbb{L}_{k_{1},\dots,\widehat{k_{j}},\dots,k_{d}}(x_{1},\dots,x_{j-1}x_{j},\dots,x_{d})\otimes e_{q^{-m}}\\
 & \qquad+\mathbb{L}_{k_{1},\dots,\widehat{k_{j}},\dots,k_{d}}(x_{1},\dots,x_{j}x_{j+1},\dots,x_{d})\otimes(e_{q^{-m}}-e_{0})
\end{align*}
for $k_{j}=1$ and $x_{j}=q^{m}z$, and
\begin{align*}
D(u,v) & \coloneqq\mathbb{L}_{k_{1},\dots,\widehat{k_{j}},\dots,k_{d}}(x_{1},\dots,x_{j-1}x_{j},\dots,x_{d})\otimes(e_{q^{-m}}-e_{0})\\
 & \qquad-\mathbb{L}_{k_{1},\dots,\widehat{k_{j}},\dots,k_{d}}(x_{1},\dots,x_{j}x_{j+1},\dots,x_{d})\otimes e_{q^{-m}}
\end{align*}
for $k_{j}=1$ and $x_{j}=q^{-m}z^{-1}$. Here, we understand that
\[
\mathbb{L}_{k_{1},\dots,\widehat{k_{j}},\dots,k_{d}}(x_{1},\dots,x_{j-1}x_{j},\dots,x_{d})=\mathbb{L}_{k_{2},\dots,k_{d}}(x_{2},\dots,x_{d})
\]
when $j=1$, and
\[
\mathbb{L}_{k_{1},\dots,\widehat{k_{j}},\dots,k_{d}}(x_{1},\dots,x_{j}x_{j+1},\dots,x_{d})=0
\]
when $j=d$. When
\[
u=\mathbb{L}_{k_{1},\dots,k_{d}}(x_{1},\dots,x_{d}),v=\mathbb{L}_{k_{1},\dots,k_{d}}(x_{1},\dots,q^{-1}x_{j},\dots,x_{d}),
\]
we define $D(u,v)$ by
\[
D(u,v)\coloneqq-D(v,u).
\]
\end{defn}

\begin{defn}
Define the $\mathbb{Q}$-linear map
\[
E_{\Omega}\colon\mathbb{Q}\hat{\mathcal{F}}\otimes\Omega\longrightarrow\mathcal{M}(\mathbb{C})
\]
by
\[
E_{\Omega}(u\otimes e_{b})(z)\coloneqq\frac{z}{z-b}E(u)(z)
\]
for $u\in\mathbb{Q}\hat{\mathcal{F}}$ and $b\in\{0\}\cup q^{\mathbb{Z}}$.
\end{defn}

\begin{lem}
\label{lem:E_adjacent_diff}For adjacent $u,v\in\hat{\mathcal{F}}$,
we have
\[
E(u)-E(v)=E_{\Omega}\bigl(D(u,v)\bigr).
\]
\end{lem}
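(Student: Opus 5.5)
By the antisymmetry $D(u,v)=-D(v,u)$ and linearity of $E_\Omega$, it suffices to treat the case
\[
u=\mathbb{L}_{k_{1},\dots,k_{d}}(x_{1},\dots,x_{d}),\qquad v=\mathbb{L}_{k_{1},\dots,k_{d}}(x_{1},\dots,qx_{j},\dots,x_{d}),
\]
where $x_j\in zq^{\mathbb{Z}}\cup z^{-1}q^{\mathbb{Z}}$. The plan is to apply \Cref{thm:q-difference_of_qMPL} at position $j$ and check that $E_\Omega(D(u,v))$ reproduces its right-hand side term by term. Both sides are meromorphic in $z$, so I only need to establish the identity on a small punctured disc around $z=0$ and then extend by the identity theorem.

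\emph{Validity of the specialization.} \Cref{thm:q-difference_of_qMPL} is an identity of meromorphic functions in $(x_1,\dots,x_d)$, and I want to restrict it to the curve $x_i=x_i(z)$. For this I must check that no term has this curve inside its polar locus, which is the concern raised in \Cref{exa:invalid_intermediate_term}.
\begin{itemize}
\item Since $u,v\in\hat{\mathcal{F}}$, condition \eqref{eq:condition_qMPL} holds for both, so each tail product $x_i\cdots x_d$ lies in $zq^{\mathbb{Z}}\cup q^{\mathbb{Z}_{>0}}$. These are either nonconstant in $z$ or equal to $q^{n}$ with $n>0$, hence never identically in $q^{\mathbb{Z}_{\le 0}}$.
\item On the right-hand side, the tail products of the merged arguments (whether $x_{j-1}x_j$ or $x_jx_{j+1}$ is merged) are exactly the tail products of $u$ with index $i\neq j$. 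So they too satisfy \eqref{eq:condition_qMPL}, and the merged symbols belong to $\hat{\mathcal{F}}$.
\item The condition $x_j\neq 1$ holds identically, since $x_j$ depends on $z$.
\end{itemize}
Therefore, for generic small $z$ (indeed for $|z|$ small, using the convergent series for $u$ and $v$), the specialized identity holds.

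\emph{Matching coefficients.} If $k_j>1$, then $E_\Omega(w\otimes e_0)=\frac{z}{z}E(w)=E(w)$, which agrees with the first formula of \Cref{thm:q-difference_of_qMPL}. If $k_j=1$, I compute the coefficients directly.
\begin{itemize}
\item For $x_j=q^m z$ and $b=q^{-m}$:
\[
\frac{z}{z-b}=\frac{q^m z}{q^m z-1}=-\frac{x_j}{1-x_j},\qquad
\frac{z}{z-b}-1=\frac{b}{z-b}=\frac{1}{q^mz-1}=-\frac{1}{1-x_j}.
\]
Consequently $-E_\Omega(\cdot\otimes e_{q^{-m}})$ gives the coefficient $\frac{x_j}{1-x_j}$ and $E_\Omega(\cdot\otimes(e_{q^{-m}}-e_0))$ gives $-\frac{1}{1-x_j}$, exactly as in the theorem.
\item For $x_j=q^{-m}z^{-1}$, again with $b=q^{-m}$:
\[
\frac{x_j}{1-x_j}=\frac{1}{q^m z-1}=\frac{b}{z-b},\qquad
-\frac{1}{1-x_j}=\frac{q^mz}{1-q^mz}=-\frac{z}{z-b}.
\]
These are the coefficients appearing in the second case of \Cref{def:D}.
\end{itemize}
The conventions for $j=1$ and $j=d$ are the same in \Cref{thm:q-difference_of_qMPL} and \Cref{def:D}, so those boundary cases need no separate argument.

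The main obstacle is the validity step. The theorem is stated for generic $x$, and I must be sure that restricting to the one-parameter family does not land on a polar locus. As shown above, this reduces entirely to the observation that the merged tail products coincide with the original tail products, so \eqref{eq:condition_qMPL} excludes the polar set.
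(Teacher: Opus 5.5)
Your proposal is correct and follows essentially the same route as the paper. It reduces to the case $v=\mathbb{L}_{k_1,\dots,k_d}(x_1,\dots,qx_j,\dots,x_d)$ by antisymmetry, applies \Cref{thm:q-difference_of_qMPL} at position $j$, and rewrites the coefficients $\frac{x_j}{1-x_j}$ and $-\frac{1}{1-x_j}$ in terms of $\frac{z}{z-q^{-m}}$ and $\frac{z}{z-q^{-m}}-1$; the paper does the same computation after dividing by $z$.

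Your extra check that the specialization avoids the polar locus is a detail the paper leaves implicit, and it is sound. There is one small slip: for the merge $x_jx_{j+1}$, the surviving tail products are those of $u$ with index $i\neq j+1$, not $i\neq j$. They are still tails of $u$, so your conclusion is unaffected.
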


\begin{proof}
It is enough to consider the case
\[
u=\mathbb{L}_{k_{1},\dots,k_{d}}(x_{1},\dots,x_{d}),v=\mathbb{L}_{k_{1},\dots,k_{d}}(x_{1},\dots,qx_{j},\dots,x_{d}).
\]
When $k_{j}>1$, by \Cref{thm:q-difference_of_qMPL}, we have
\begin{align*}
E(u)-E(v) & =\mathrm{Li}_{q;k_{1},\dots,k_{j}-1,\dots,k_{d}}(x_{1},\dots,x_{d})\\
 & =E_{\Omega}(\mathbb{L}_{k_{1},\dots,k_{j}-1,\dots,k_{d}}(x_{1},\dots,x_{d})\otimes e_{0}).
\end{align*}
When $k_{j}=1$, by \Cref{thm:q-difference_of_qMPL}, we have
\begin{align*}
\frac{1}{z}\left(E(u)-E(v)\right) & =\frac{1}{z}\frac{x_{j}}{1-x_{j}}\mathrm{Li}_{q;k_{1},\dots,\widehat{k_{j}},\dots,k_{d}}(x_{1},\dots,x_{j-1}x_{j},\dots,x_{d})\\
 & \quad-\frac{1}{z}\frac{1}{1-x_{j}}\mathrm{Li}_{q;k_{1},\dots,\widehat{k_{j}},\dots,k_{d}}(x_{1},\dots,x_{j}x_{j+1},\dots,x_{d}).
\end{align*}
Furthermore, when $x_{j}=q^{m}z$, we have
\[
\frac{1}{z}\frac{x_{j}}{1-x_{j}}=-\frac{1}{z-q^{-m}},\quad-\frac{1}{z}\frac{1}{1-x_{j}}=\frac{1}{z-q^{-m}}-\frac{1}{z}
\]
and when $x_{j}=q^{-m}z^{-1}$, we have
\[
\frac{1}{z}\frac{x_{j}}{1-x_{j}}=\frac{1}{z-q^{-m}}-\frac{1}{z},\quad-\frac{1}{z}\frac{1}{1-x_{j}}=-\frac{1}{z-q^{-m}},
\]
which completes the proof.
\end{proof}

\subsection{Definition of $\mathcal{F}^{n}$ and standard paths}
\begin{defn}
For $n\in\mathbb{Z}$, we denote by $\mathcal{F}^{n}$ the subset
of $\hat{\mathcal{F}}$ consisting of the elements 
\[
\mathbb{L}_{k_{1},\dots,k_{d}}(x_{1},\dots,x_{d})\in\hat{\mathcal{F}}
\]
such that 
\[
x_{i}\in\{vq^{s}\,\mid\,v\in\{1,zq^{n},1/(zq^{n})\},\,0\leq s\leq k_{i}\}
\]
for $i=1,\dots,d$.
\end{defn}

We put 
\[
\mathcal{F}\coloneqq\bigcup_{n\in\mathbb{Z}}\mathcal{F}^{n}\subset\hat{\mathcal{F}}.
\]

Although elements of $\hat{\mathcal{F}}$ are formal symbols, we sometimes
write 
\[
u(z)=\mathbb{L}_{k_{1},\dots,k_{d}}(x_{1}(z),\dots,x_{d}(z))
\]
to emphasize the dependence of the arguments on $z$. In this notation,
we write 
\[
u(zq)\coloneqq\mathbb{L}_{k_{1},\dots,k_{d}}(x_{1}(zq),\dots,x_{d}(zq)).
\]

\begin{defn}[Standard path]
Let 
\[
u(z)\coloneqq\mathbb{L}_{k_{1},\dots,k_{d}}(x_{1}(z),\dots,x_{d}(z))\in\mathcal{F}.
\]
For each $z$-dependent position $i$, there is a unique integer $r$
such that
\[
x_{i}(z)=zq^{r}\qquad\text{or}\qquad x_{i}(z)=z^{-1}q^{1-r}.
\]
We call $r$ the level of position $i$. The levels are determined
by the original word $u(z)$ and remain fixed throughout the following
procedure. Starting from $u(z)$, we replace all $z$-dependent entries
one at a time, in increasing order of their levels. Entries having
the same level are replaced from right to left. At each step, the
replacement is 
\[
x_{i}(z)\quad\longmapsto\quad x_{i}(zq).
\]
The resulting finite sequence consists of formal $q$MPLs (\Cref{thm:Fn_StandardPath})
and is denoted by 
\[
\operatorname{Std}(u)=(u_{0},u_{1},\dots,u_{l}),
\]
where $u_{0}=u(z)$ and $u_{l}=u(zq)$. We call it the standard path
from $u(z)$ to $u(zq)$.
\end{defn}

\begin{example}
Let
\[
u(z)\coloneqq\mathbb{L}_{1,1}(z^{-1},zq)\in\mathcal{F}.
\]
Both positions have level $1$. Therefore, the second entry is replaced
first, and then the first entry is replaced. Hence
\[
\operatorname{Std}(u)=\bigl(\mathbb{L}_{1,1}(z^{-1},zq),\ \mathbb{L}_{1,1}(z^{-1},zq^{2}),\ \mathbb{L}_{1,1}(z^{-1}q^{-1},zq^{2})\bigr).
\]
\end{example}

The standard path has the following property, which will be useful
in defining the confluence relations.
\begin{thm}
\label{thm:Fn_StandardPath}Let $n\in\mathbb{Z}$ and $u\in\mathcal{F}^{n}$.
Let 
\[
\operatorname{Std}(u)=(u_{0},u_{1},\dots,u_{l})
\]
be the standard path from $u(z)$ to $u(zq)$. Then $u_{0},\dots,u_{l}$
are formal $q$MPLs. Furthermore, for $i=1,\dots,l$, the elements
$u_{i-1}$ and $u_{i}$ are adjacent and 
\[
D(u_{i-1},u_{i})\in\mathbb{Q}\mathcal{F}^{n}\otimes(\mathbb{Q}e_{0}\oplus\mathbb{Q}e_{q^{-n}})+\mathbb{Q}\mathcal{F}^{n+1}\otimes(\mathbb{Q}e_{0}\oplus\mathbb{Q}e_{q^{-(n+1)}}).
\]
\end{thm}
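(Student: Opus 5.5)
The plan is to give a direct combinatorial verification. It has two parts: first, that every word on $\operatorname{Std}(u)$ lies in $\hat{\mathcal{F}}$, i.e.\ satisfies \eqref{eq:condition_qMPL}; second, a case check of $D(u_{i-1},u_{i})$ against \Cref{def:D}. Write $u=\mathbb{L}_{k_1,\dots,k_d}(x_1,\dots,x_d)\in\mathcal{F}^{n}$. Each $z$-dependent entry is $x_i=zq^{n+s_i}$, which has level $r_i=n+s_i$, or $x_i=z^{-1}q^{-n+s_i}$, which has level $r_i=n+1-s_i$; in both cases $0\le s_i\le k_i$. In this notation $x_i=zq^{r_i}$, or $x_i=z^{-1}q^{1-r_i}$. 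An intermediate word $u_t$ is obtained by replacing a ``prefix'' of the $z$-dependent positions in the total order (level increasing, then position decreasing). Replacing a $z$-entry multiplies it by $q$, and replacing a $z^{-1}$-entry multiplies it by $q^{-1}$. Adjacency of $u_{t-1}$ and $u_t$ is then immediate from the definition, since exactly one $z$-dependent entry moves by $q^{\pm1}$.

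\emph{Step 1 (membership in $\hat{\mathcal{F}}$).} Fix a tail $x_i\cdots x_d$. Its $z$-exponent does not change along the path. If the exponent is $1$, the tail lies in $zq^{\mathbb{Z}}$ automatically. If the exponent is $0$, the tail contains equally many $z$- and $z^{-1}$-entries, and I must show that its $q$-exponent stays positive. This $q$-exponent equals
\[
\sum_{\text{const}}(\text{exponent}\ge 0)+\sum_{z\text{-entries}}(r_a+\epsilon_a)+\sum_{z^{-1}\text{-entries}}(1-r_b-\epsilon_b),
\]
where $\epsilon\in\{0,1\}$ indicates whether the entry has already been replaced.

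Pair the $z$-entries and the $z^{-1}$-entries of the tail after sorting both groups in the total order. If a $z^{-1}$-entry $b$ has been replaced but its partner $a$ has not, then $b$ precedes $a$ in the order, so $r_b\le r_a$. The pair then contributes $r_a-r_b\ge 0$, with equality only when the levels are equal and $b$ lies to the right of $a$. In all other cases the pair contributes at least $r_a-r_b+1$, and the sorted pairing gives $\sum(r_a-r_b+1)>0$ in the original word. The obstacle is to upgrade ``each pair contributes $\ge 0$'' to a strictly positive total. I would do this by an exchange argument on the sorted pairing, using the right-to-left tie-break: a tail whose last $z$-dependent entry is a $z^{-1}$ cannot have tail product $q^{\le 0}$, because of the original condition applied to the shorter tails. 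This step, which is exactly what fails in \Cref{exa:invalid_intermediate_term}, is where I expect the main difficulty.

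\emph{Step 2 (the shape of $D$).} The key observation is a threshold dichotomy. The lowest possible levels are $n$ for $z$-entries with $s=0$ and $n+1-k_i$ for $z^{-1}$-entries with $s=k_i$. After the replacement, an entry lies in the $\mathcal{F}^{n+1}$ range precisely when it is a replaced entry, or an unreplaced one other than $zq^{n}$ or $z^{-1}q^{-n+k_i}$. It lies in the $\mathcal{F}^{n}$ range precisely when it is unreplaced, or a replaced one other than $zq^{n+k_i}$ or $z^{-1}q^{-n}$. Since levels are processed in increasing order, every intermediate word fails at most one of these two descriptions. I would check this by comparing the level of the entry being replaced with those of the offending entries.

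For $k_j>1$, the output $\mathbb{L}_{\dots,k_j-1,\dots}(x_1,\dots,x_d)$ carries the entry $x_j$ of $u_{i-1}$ with weight $k_j-1$. Here I would verify that the position being moved has its shift $s$ in $[0,k_j-1]$ for the appropriate $n$ or $n+1$, which again follows from its level being the current threshold.

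For $k_j=1$, we have $x_j=zq^{m}$ with $m\in\{n,n+1\}$, or $x_j=z^{-1}q^{-m}$ with $m\in\{n,n+1\}$, since $s\in\{0,1\}$. The letter $e_{q^{-m}}$ is therefore $e_{q^{-n}}$ or $e_{q^{-(n+1)}}$. I would then check that the merged entries $x_{j-1}x_j$ and $x_jx_{j+1}$ lie in the matching $\mathcal{F}^{m}$. A product of a constant $q^{s'}$ with $zq^{m}$ has the form $zq^{m+s'}$ with $s'\le k$ of the neighbour. A product $z\cdot z^{-1}$ is a constant, whose exponent must lie in $[0,k]$, and this follows from Step 1 together with the level bounds. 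This is a finite, mechanical case analysis once the threshold dichotomy is in place.
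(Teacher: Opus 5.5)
\textbf{There is a genuine gap in Step~1, exactly where you flag it.} With your pairing, the $z$-entries and the $z^{-1}$-entries of the tail are sorted separately in the replacement order. Termwise nonnegativity does hold: every pair has original exponent $r_a-r_b+1=s_a+s_b\ge 0$, and a mixed pair ($b$ replaced, $a$ not) has $r_b\le r_a$. But a mixed pair of equal level drops from $1$ to $0$. Nothing in your plan recovers the strict positivity of the original tail. The ingredient you name for the exchange argument is vacuous: in any word of $\hat{\mathcal{F}}$, the rightmost $z$-dependent entry of a tail is always a $z$-entry, since otherwise the tail starting at it would have $z$-exponent $-1$. So the first assertion of the theorem, which carries the real content, is not yet proved.

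\textbf{The missing idea is to pair by position rather than by replacement order.} Every tail of $u$ has $z$-exponent $0$ or $1$. Hence, in a tail of $z$-exponent $0$, the $z$-dependent entries read from the right alternate $z,z^{-1},z,z^{-1},\dots$. Their positions are therefore $b_1<a_1<\cdots<b_T<a_T$, with $x_{a_t}$ a $z$-entry and $x_{b_t}$ a $z^{-1}$-entry; pair $a_t$ with $b_t$. A mixed pair now requires $b_t$ to precede $a_t$ in the replacement order. Since $b_t$ lies to the left of $a_t$, the right-to-left tie-break excludes equal levels. So $r_{b_t}<r_{a_t}$, and the pair contributes $r_{a_t}-r_{b_t}\ge 1$. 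If no pair is mixed, every pair contributes at least its original exponent. Then the tail exponent is at least the original one, which is positive because $u\in\hat{\mathcal{F}}$. This is the paper's argument. It is where the tie-break rule is essential, and it is exactly what fails in \Cref{exa:invalid_intermediate_term}.

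\textbf{Two smaller points in Step~2.}
First, ``each intermediate word lies in $\mathcal{F}^{n}$ or $\mathcal{F}^{n+1}$'' is weaker than what you need. For the shape of $D$ you need $u_{i-1}$ and $u_i$ to lie in a common $\mathcal{F}^{m}$. Your level comparison does give this:
\begin{itemize}
\item if the moved position has level $\le n$, no entry replaced so far was originally $zq^{n+k_i}$ or $z^{-1}q^{-n}$ (those have level $\ge n+1$), so both words are in $\mathcal{F}^{n}$;
\item if it has level $\ge n+1$, no still-unreplaced entry is $zq^{n}$ or $z^{-1}q^{-n+k_i}$ (those have level $\le n$), so both words are in $\mathcal{F}^{n+1}$.
\end{itemize}
Second, for a $z^{-1}$-move, $D(u_{i-1},u_i)=-D(u_i,u_{i-1})$ is built from the entries of $u_i$, that is, from $q^{-1}x_j=z^{-1}q^{-r}$ with $r$ the level. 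So for $k_j=1$ the label is $e_{q^{-r}}$. Reading it off $x_j=z^{-1}q^{-n+s}$, as you do, would give $m\in\{n-1,n\}$.
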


We give a proof of this theorem in \Cref{subsec:Proof_of_fn_standard}.

\subsection{Formal $q$-differential and Jackson integral operators}

\begin{defn}[Formal $q$-differential operator]
Define
\[
\partial\colon\mathcal{F}\to\mathbb{Q}\mathcal{F}\otimes\Omega
\]
by
\[
\partial(u)\coloneqq\sum^{l}_{i=1}D(u_{i-1},u_{i}),
\]
where 
\[
\operatorname{Std}(u)=(u_{0},u_{1},\dots,u_{l}).
\]
We extend $\partial$ uniquely to a $\mathbb{Q}$-linear map
\[
\partial\colon\mathbb{Q}\mathcal{F}\longrightarrow\mathbb{Q}\mathcal{F}\otimes\Omega.
\]
\end{defn}

Then, by the definitions of $\partial$ and \Cref{lem:E_adjacent_diff},
we have
\begin{equation}
E_{\Omega}(\partial u)(z)=E(u)(z)-E(u)(zq)\qquad(=zD_{q}E(u)).\label{eq:evaluation_partial}
\end{equation}

Define the sets of almost constant formal $q$MPLs by
\[
\hat{\mathcal{C}}_{z}\coloneqq\left\{ \mathbb{L}_{k_{1},\dots,k_{d}}(q^{n_{1}},\dots,q^{n_{d-1}},zq^{n_{d}})\,\middle|\,d\geq1,\,n_{i}\in\mathbb{Z}\ (1\leq i\leq d)\right\} \cup\{\mathbb{L}(\emptyset)\}
\]
and
\[
\mathcal{C}^{n}_{z}\coloneqq\left\{ \mathbb{L}_{k_{1},\dots,k_{d}}(q^{n_{1}},\dots,q^{n_{d-1}},zq^{n+n_{d}})\,\middle|\,d\geq1,\,0\leq n_{i}\leq k_{i}\ (1\leq i\leq d)\right\} \cup\{\mathbb{L}(\emptyset)\}
\]
for $n\in\mathbb{Z}$. Then $\mathcal{C}^{n}_{z}=\hat{\mathcal{C}}_{z}\cap\mathcal{F}^{n}$.
Motivated by \Cref{cor:Jackson_integral_qMPL}, we make the following
definition.

\begin{defn}[Formal Jackson integral operator]
Define the $\mathbb{Q}$-linear map $S\colon\mathbb{Q}\hat{\mathcal{C}}_{z}\otimes\Omega\to\mathbb{Q}\hat{\mathcal{C}}_{z}$
by 
\[
S(\mathbb{L}_{k_{1},\dots,k_{d}}(q^{n_{1}},\dots,q^{n_{d-1}},zq^{n_{d}})\otimes e_{0})\coloneqq\mathbb{L}_{k_{1},\dots,k_{d-1},k_{d}+1}(q^{n_{1}},\dots,q^{n_{d-1}},zq^{n_{d}}),
\]
\[
S(\mathbb{L}_{k_{1},\dots,k_{d}}(q^{n_{1}},\dots,q^{n_{d-1}},zq^{n_{d}})\otimes e_{q^{-r}})\coloneqq-\mathbb{L}_{k_{1},\dots,k_{d},1}(q^{n_{1}},\dots,q^{n_{d-1}},q^{n_{d}-r},zq^{r}),
\]
\[
S(\mathbb{L}(\emptyset)\otimes e_{q^{-r}})\coloneqq-\mathbb{L}_{1}(zq^{r}),
\]
\[
S(\mathbb{L}(\emptyset)\otimes e_{0})\coloneqq0.
\]
The last assignment is made by convention, and this exceptional value
is not used in the recursive construction of $\varphi$ introduced
later.
\end{defn}

By \Cref{cor:Jackson_integral_qMPL}, we have the following.
\begin{lem}
\label{lem:evaluation_S}Let 
\[
\xi\in\mathbb{Q}\hat{\mathcal{C}}_{z}\otimes\Omega
\]
and suppose that the coefficient of $\mathbb{L}(\emptyset)\otimes e_{0}$
in $\xi$ is zero. Then
\[
\int\frac{E_{\Omega}(\xi)(z)}{z}\,d_{q}z=E(S(\xi))(z).
\]
\end{lem}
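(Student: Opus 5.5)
By linearity of both sides in $\xi$, it suffices to treat basis elements $\xi=w\otimes e_{b}$ with $w\in\hat{\mathcal{C}}_{z}$, $b\in\{0\}\cup q^{\mathbb{Z}}$, and $(w,b)\neq(\mathbb{L}(\emptyset),0)$. Write $w=\mathbb{L}_{k_{1},\dots,k_{d}}(q^{n_{1}},\dots,q^{n_{d-1}},zq^{n_{d}})$ when $d\geq1$. By definition of $E_{\Omega}$, the integrand becomes
\[
\frac{E_{\Omega}(w\otimes e_{b})(z)}{z}=\frac{E(w)(z)}{z-b}.
\]
One subtlety needs care: the Jackson integral is only defined for integrands without a pole at $z=0$. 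So, before each case, we check that $E(w)(z)/(z-b)$ is regular at $0$. For $b\neq0$ this is clear. For $b=0$ with $d\geq1$, the function $E(w)$ vanishes at $z=0$ by \eqref{eq:eval_at_zero}, because the last argument depends on $z$. Hence $E(w)(z)/z$ is holomorphic at $0$. This is exactly where the excluded case $\mathbb{L}(\emptyset)\otimes e_{0}$, with integrand $1/z$, would fail.

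The proof then proceeds by cases, each of which is an instance of \Cref{cor:Jackson_integral_qMPL} with $x_{i}=q^{n_{i}}$ for $i<d$ and $x_{d}=q^{n_{d}}$.

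\emph{Case $b=0$, $d\geq1$.} The first formula of the corollary gives
\[
\int\frac{E(w)}{z}\,d_{q}z=\mathrm{Li}_{q;k_{1},\dots,k_{d}+1}(q^{n_{1}},\dots,zq^{n_{d}}),
\]
which is $E(S(w\otimes e_{0}))$.

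\emph{Case $b=q^{-r}$, $d\geq1$.} Take $a=q^{r}$ in the second formula, so that $a^{-1}=b$. It gives
\[
-\mathrm{Li}_{q;k_{1},\dots,k_{d},1}(q^{n_{1}},\dots,q^{n_{d-1}},q^{n_{d}-r},zq^{r}),
\]
which is precisely $E(S(w\otimes e_{q^{-r}}))$.

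\emph{Case $w=\mathbb{L}(\emptyset)$, $b=q^{-r}$.} The third formula with $a=q^{r}$ gives $-\mathrm{Li}_{q;1}(zq^{r})=E(S(\mathbb{L}(\emptyset)\otimes e_{q^{-r}}))$.

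It remains to confirm that the outputs of $S$ are legitimate elements of $\hat{\mathcal{C}}_{z}$, so that $E$ applies. In each case the new arguments are $q$-powers followed by a final argument in $zq^{\mathbb{Z}}$, which is the required shape.

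The main point to be careful about is that \Cref{cor:Jackson_integral_qMPL} is used as an identity of meromorphic functions. Both sides are defined by convergent series for $|z|$ small, and there the Jackson integral sum converges. The identity therefore holds near $0$, and by meromorphic continuation it holds everywhere. No other obstacle is expected; the lemma is a direct translation of the corollary into the formal notation.
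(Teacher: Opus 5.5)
Your proposal is correct and follows the paper's approach: the paper derives this lemma directly from \Cref{cor:Jackson_integral_qMPL}, applied case by case to the defining cases of $S$, exactly as you do. Your extra checks (regularity of the integrand at $z=0$ via \eqref{eq:eval_at_zero}, why $\mathbb{L}(\emptyset)\otimes e_{0}$ must be excluded, and that the outputs of $S$ lie in $\hat{\mathcal{C}}_{z}$) only spell out what the paper leaves implicit.
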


\subsection{Definition of the confluence relations}

We denote by $\hat{\mathcal{C}}\subset\hat{\mathcal{F}}$ the subset
consisting of the elements which are independent of $z$. We also
put $\mathcal{C}\coloneqq\hat{\mathcal{C}}\cap\mathcal{F}$.
\begin{lem}
\label{lem:partial_no_c_e0}For $u\in\mathcal{F}$, we have
\[
\partial(u)\in\mathbb{Q}(\mathcal{F}\setminus\mathcal{C})\otimes\Omega+\mathbb{Q}\mathcal{C}\otimes\bigoplus_{b\in q^{\mathbb{Z}}}\mathbb{Q}e_{b}.
\]
\end{lem}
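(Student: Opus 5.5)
The plan is to prove the statement term by term, working directly from \Cref{def:D}. By definition, $\partial(u)$ is a sum of terms $D(u_{i-1},u_{i})$ over consecutive elements of $\operatorname{Std}(u)$. By \Cref{thm:Fn_StandardPath}, every $u_{i}$ is a genuine formal $q$MPL, so each $u_{i}$ satisfies \eqref{eq:condition_qMPL}. It therefore suffices to show the following for each single step $D(w,w')$, where $w,w'$ are adjacent and differ in a $z$-dependent position $j$: every term of $D(w,w')$ carrying $e_{0}$ has its formal $q$MPL in $\mathcal{F}\setminus\mathcal{C}$. Terms carrying $e_{b}$ with $b\in q^{\mathbb{Z}}$ are allowed in either summand, so they need no attention. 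I do not plan to look for any cancellation between different steps of the path; the claim should already hold step by step.

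Write $w=\mathbb{L}_{k_{1},\dots,k_{d}}(x_{1},\dots,x_{d})$ and go through the cases of \Cref{def:D}. For $x_{j}=z^{-1}q^{\ast}$ the step is $x_{j}\mapsto q^{-1}x_{j}$, and I handle it via $D(w,w')=-D(w',w)$; this only changes signs, and the formal $q$MPLs paired with $e_{0}$ are the same symbols as in the defining formula.

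If $k_{j}>1$, the only term is $\mathbb{L}_{\dots,k_{j}-1,\dots}(x_{1},\dots,x_{d})\otimes e_{0}$. Its $j$-th argument is still $z$-dependent, so this symbol is not in $\mathcal{C}$.

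If $k_{j}=1$ and $x_{j}=zq^{m}$, the $e_{0}$ appears only on the right-merged symbol with arguments $(\dots,x_{j}x_{j+1},\dots)$, or the term is $0$ when $j=d$. Suppose this symbol were $z$-independent. Then every $x_{i}$ with $i\neq j,j+1$ is constant, and $x_{j+1}\in z^{-1}q^{\mathbb{Z}}$. But then $x_{j+1}\cdots x_{d}\in z^{-1}q^{\mathbb{Z}}$, which contradicts \eqref{eq:condition_qMPL} for $w$ at index $j+1$.

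If $k_{j}=1$ and $x_{j}\in z^{-1}q^{\mathbb{Z}}$, the $e_{0}$ appears only on the left-merged symbol with arguments $(\dots,x_{j-1}x_{j},\dots)$. When $j=1$ this is $\mathbb{L}_{k_{2},\dots,k_{d}}(x_{2},\dots,x_{d})$. Suppose it were $z$-independent. Then all $x_{i}$ with $i\geq j+1$ are constant, so $x_{j}\cdots x_{d}\in z^{-1}q^{\mathbb{Z}}$. This again contradicts \eqref{eq:condition_qMPL}, now at index $j$, whether or not $j=1$.

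In all cases, every $e_{0}$-term has its symbol in $\mathcal{F}\setminus\mathcal{C}$. These symbols lie in $\mathcal{F}$ by \Cref{thm:Fn_StandardPath}. Summing over the steps of the path gives the lemma.

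The only delicate point is applying \eqref{eq:condition_qMPL} to the intermediate elements $u_{i-1}$ rather than only to $u$; this is exactly what \Cref{thm:Fn_StandardPath} supplies. I would also double-check the sign and merging conventions of the $z^{-1}$ case through $D(w,w')=-D(w',w)$, to confirm that $e_{0}$ indeed sits on the left-merged term.
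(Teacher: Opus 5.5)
Your proof is correct and is essentially the paper's argument. The paper observes that for any adjacent pair in $\hat{\mathcal{F}}$, the definition of $D$ never pairs a $z$-independent symbol with $e_{0}$; your case analysis via \eqref{eq:condition_qMPL} is exactly the verification of that observation. One small point: for the $z^{-1}$-steps, the condition is applied to $u_{i}$ rather than $u_{i-1}$, which is harmless since both are formal $q$MPLs.
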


\begin{proof}
For adjacent $v,v'\in\hat{\mathcal{F}}$, by the definition of $D(v,v')$,
we have
\[
D(v,v')\in\mathbb{Q}(\hat{\mathcal{F}}\setminus\hat{\mathcal{C}})\otimes\Omega+\mathbb{Q}\hat{\mathcal{C}}\otimes\bigoplus_{b\in q^{\mathbb{Z}}}\mathbb{Q}e_{b}.
\]
The lemma is an immediate consequence of this.
\end{proof}

Define $\varphi\colon\mathbb{Q}\mathcal{F}\to\mathbb{Q}\mathcal{C}\otimes\mathbb{Q}\hat{\mathcal{C}}_{z}$
recursively by
\[
\varphi\coloneqq(\mathrm{id}_{\mathbb{Q}\mathcal{C}}\otimes S)\circ(\varphi\otimes\mathrm{id}_{\Omega})\circ\partial+\mathrm{Const},
\]
where $(\mathrm{id}_{\mathbb{Q}\mathcal{C}}\otimes S)\circ(\varphi\otimes\mathrm{id}_{\Omega})\circ\partial$
is the composite map
\[
\mathbb{Q}\mathcal{F}\xrightarrow{\partial}\mathbb{Q}\mathcal{F}\otimes\Omega\xrightarrow{\varphi\otimes\mathrm{id}_{\Omega}}\mathbb{Q}\mathcal{C}\otimes\mathbb{Q}\hat{\mathcal{C}}_{z}\otimes\Omega\xrightarrow{\mathrm{id}_{\mathbb{Q}\mathcal{C}}\otimes S}\mathbb{Q}\mathcal{C}\otimes\mathbb{Q}\hat{\mathcal{C}}_{z}
\]
and $\mathrm{Const}$ is the $\mathbb{Q}$-linear map defined by
\[
\mathrm{Const}(u)\coloneqq\begin{cases}
u\otimes\mathbb{L}(\emptyset) & \text{if }u\in\mathcal{C}\\
0 & \text{if }u\in\mathcal{F}\setminus\mathcal{C}
\end{cases}
\]
for $u\in\mathcal{F}$. This recursion terminates because $\partial$
lowers the weight by one. Here, by \Cref{lem:partial_no_c_e0}, the
term $S(\mathbb{L}(\emptyset)\otimes e_{0})$ does not occur in the
recursive computation of $\varphi$.

By \Cref{thm:Fn_StandardPath} and the definition of $S$, we have
\[
\partial(\mathcal{F}^{n})\subset\mathbb{Q}\mathcal{F}^{n}\otimes(\mathbb{Q}e_{0}\oplus\mathbb{Q}e_{q^{-n}})+\mathbb{Q}\mathcal{F}^{n+1}\otimes(\mathbb{Q}e_{0}\oplus\mathbb{Q}e_{q^{-(n+1)}})
\]
and
\[
S(\mathbb{Q}\mathcal{C}^{n}_{z}\otimes(\mathbb{Q}e_{0}\oplus\mathbb{Q}e_{q^{-n}})+\mathbb{Q}\mathcal{C}^{n+1}_{z}\otimes(\mathbb{Q}e_{0}\oplus\mathbb{Q}e_{q^{-(n+1)}}))\subset\mathbb{Q}\mathcal{C}^{n}_{z}
\]
for $n\in\mathbb{Z}$. These inclusions imply
\begin{equation}
\varphi(\mathbb{Q}\mathcal{F}^{n})\subset\mathbb{Q}\mathcal{C}\otimes\mathbb{Q}\mathcal{C}^{n}_{z}\label{eq:phi_Fn_image}
\end{equation}
for $n\in\mathbb{Z}$.

We denote by $E^{(2)}$ the $\mathbb{Q}$-linear map
\[
\mathbb{Q}\hat{\mathcal{F}}\otimes\mathbb{Q}\hat{\mathcal{F}}\longrightarrow\mathcal{M}(\mathbb{C})\qquad;\qquad u\otimes v\mapsto E(u)E(v).
\]
 
\begin{prop}
\label{prop:evaluation_phi}For $u\in\mathbb{Q}\mathcal{F}$, we have
$E^{(2)}(\varphi(u))=E(u)$.
\end{prop}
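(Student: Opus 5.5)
We prove $E^{(2)}(\varphi(u))=E(u)$ by induction on the weight, working with a single basis element $u\in\mathcal{F}$; the general statement then follows by linearity. The recursion for $\varphi$ strictly lowers the weight, because $\partial$ lowers it by one. So we may assume the identity for every element of $\mathbb{Q}\mathcal{F}$ of smaller weight.

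\emph{Case $u\in\mathcal{C}$.} Each position of $u$ is $z$-independent, so $\operatorname{Std}(u)=(u)$ and $\partial(u)=0$. Hence
\[
\varphi(u)=u\otimes\mathbb{L}(\emptyset),\qquad E^{(2)}(\varphi(u))=E(u)\cdot 1=E(u).
\]

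\emph{Case $u\in\mathcal{F}\setminus\mathcal{C}$.} Here $\mathrm{Const}(u)=0$. Write
\[
\partial(u)=\sum_{j}v_{j}\otimes e_{b_{j}}
\]
with $v_{j}$ of weight one less than $u$. By induction, $\varphi(v_{j})=\sum_{k}c_{jk}\otimes w_{jk}$ with $c_{jk}\in\mathbb{Q}\mathcal{C}$, $w_{jk}\in\mathbb{Q}\hat{\mathcal{C}}_{z}$, and
\[
\sum_{k}E(c_{jk})E(w_{jk})=E(v_{j}).
\]
By definition,
\[
\varphi(u)=\sum_{j,k}c_{jk}\otimes S(w_{jk}\otimes e_{b_{j}}).
\]
Fix a constant $c$ and let $\xi_{c}\in\mathbb{Q}\hat{\mathcal{C}}_{z}\otimes\Omega$ collect the corresponding terms $w_{jk}\otimes e_{b_{j}}$. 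We want to apply \Cref{lem:evaluation_S} to each $\xi_{c}$ and use linearity of the Jackson integral, which is legitimate because each $E(c)$ is a $z$-independent constant. This gives
\[
E^{(2)}(\varphi(u))=\int\frac{1}{z}\sum_{j,k}E(c_{jk})E(w_{jk})\frac{z}{z-b_{j}}\,d_{q}z=\int\frac{E_{\Omega}(\partial u)(z)}{z}\,d_{q}z.
\]
By \eqref{eq:evaluation_partial}, the last integrand equals $D_{q}E(u)$. Therefore
\[
\int\frac{E_{\Omega}(\partial u)(z)}{z}\,d_{q}z=E(u)(z)-E(u)(0)=E(u)(z),
\]
where $E(u)(0)=0$ by \eqref{eq:eval_at_zero}, since $u\notin\mathcal{C}$ has some $z$-dependent argument.

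\emph{The main obstacle} is making these manipulations legitimate. There are three points to check.

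First, \Cref{lem:evaluation_S} needs the coefficient of $\mathbb{L}(\emptyset)\otimes e_{0}$ in $\xi_{c}$ to vanish. Such a term arises from some $v_{j}\otimes e_{0}$ for which $\varphi(v_{j})$ has a component $c\otimes\mathbb{L}(\emptyset)$. A second-tensor factor $\mathbb{L}(\emptyset)$ can only come from $\mathrm{Const}$, since $S$ never outputs $\mathbb{L}(\emptyset)$. So it occurs only when $v_{j}\in\mathcal{C}$. But \Cref{lem:partial_no_c_e0} says elements of $\mathcal{C}$ appear in $\partial u$ only paired with $e_{b}$, $b\in q^{\mathbb{Z}}$, so this case cannot occur.

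Second, the Jackson integral requires the integrand to have no pole at $z=0$. For this I would check directly that $E(w)$ is holomorphic near $0$ for $w\in\hat{\mathcal{C}}_{z}$, and that the factor $1/(z-b)$ with $b\neq0$ is harmless there. An alternative route avoids the issue: show that both $E^{(2)}(\varphi(u))$ and $E(u)$ vanish at $0$ and have the same $q$-difference. Then their difference $g$ satisfies $g(z)=g(zq)$, so
\[
g(z)=\lim_{n\to\infty}g(zq^{n})=g(0)=0.
\]

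Third, the equality of meromorphic functions is first obtained near $z=0$, where all the series converge. It then extends to all of $\mathbb{C}$ by the identity theorem for meromorphic continuation.
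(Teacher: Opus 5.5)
Your proposal is correct and follows essentially the same route as the paper: induction on weight, unwinding the recursive definition of $\varphi$, applying \Cref{lem:evaluation_S} with \Cref{lem:partial_no_c_e0} ruling out the term $\mathbb{L}(\emptyset)\otimes e_{0}$, and then using \eqref{eq:evaluation_partial} together with $\int D_{q}f\,d_{q}z=f(z)-f(0)$ and \eqref{eq:eval_at_zero}. The only cosmetic difference is that you treat the case $u\in\mathcal{C}$ separately, whereas the paper handles both cases at once via $E^{(2)}(\mathrm{Const}(u))(z)=E(u)(0)$.
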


\begin{proof}
By $\mathbb{Q}$-linearity, it is enough to consider the case $u\in\mathcal{F}$.
We prove the claim by induction on the weight of $u$. Write
\[
\partial u=\sum_{i}u_{i}\otimes\omega_{i}\qquad(u_{i}\in\mathcal{F},\,\omega_{i}\in\Omega)
\]
and
\[
\varphi(u_{i})=\sum_{j}u_{i,j}\otimes v_{i,j}\qquad(u_{i,j}\in\mathbb{Q}\mathcal{C},\,v_{i,j}\in\mathbb{Q}\hat{\mathcal{C}}_{z}).
\]
By the induction hypothesis, we have
\begin{equation}
\sum_{j}E(u_{i,j})E(v_{i,j})=E(u_{i}).\label{eq:E_ui}
\end{equation}
Then,
\begin{align*}
E^{(2)}(\varphi(u))(z) & =\sum_{i,j}E(u_{i,j})E(S(v_{i,j}\otimes\omega_{i}))+E^{(2)}(\mathrm{Const}(u))(z)\qquad(\text{by the recursive definition of \ensuremath{\varphi}})\\
 & =\sum_{i,j}\int\frac{E(u_{i,j})E_{\Omega}(v_{i,j}\otimes\omega_{i})}{z}d_{q}z+E^{(2)}(\mathrm{Const}(u))(z)\qquad(\text{by \cref{lem:evaluation_S} and \cref{lem:partial_no_c_e0}})\\
 & =\int\frac{E_{\Omega}(\partial u)}{z}d_{q}z+E^{(2)}(\mathrm{Const}(u))(z)\qquad(\text{by \eqref{eq:E_ui}}).
\end{align*}
By \eqref{eq:evaluation_partial}, we have
\[
\int\frac{E_{\Omega}(\partial u)}{z}\,d_{q}z=\int D_{q}E(u)\,d_{q}z=E(u)(z)-E(u)(0).
\]
Moreover,
\[
E^{(2)}(\mathrm{Const}(u))(z)=E(u)(0)
\]
by \eqref{eq:eval_at_zero} and the definition of $\mathrm{Const}$.
Combining these identities gives $E^{(2)}(\varphi(u))=E(u)$.
\end{proof}

By \Cref{prop:evaluation_phi}, for $u\in\mathbb{Q}\mathcal{F}$,
we have
\begin{equation}
\operatorname*{Res}_{z=1}\frac{E^{(2)}(\varphi(u))(z)}{z(z-1)}\,dz=\operatorname*{Res}_{z=1}\frac{E(u)(z)}{z(z-1)}\,dz.\label{eq:pre_conf}
\end{equation}
Let us express both sides of \eqref{eq:pre_conf} in terms of SZ $q$MZVs
for some $u$.

\begin{defn}
Define $\pi\colon\mathbb{Q}\mathcal{C}\to\mathbb{Q}\langle\mathcal{B}\rangle^{0}$
and $\pi_{z}\colon\mathbb{Q}\mathcal{C}^{0}_{z}\to\mathbb{Q}\langle\mathcal{B}\rangle$
by
\begin{align*}
\pi(\mathbb{L}_{k_{1},\dots,k_{d}}(q^{n_{1}},\dots,q^{n_{d}})) & \coloneqq\pi_{z}(\mathbb{L}_{k_{1},\dots,k_{d}}(q^{n_{1}},\dots,q^{n_{d-1}},q^{n_{d}}z))\\
 & \coloneqq F(k_{1},\dots,k_{d};n_{1},\dots,n_{d})
\end{align*}
and
\[
\pi(\mathbb{L}(\emptyset))\coloneqq\pi_{z}(\mathbb{L}(\emptyset))\coloneqq1.
\]
\end{defn}

\begin{defn}
Define the map
\[
\varphi^{\pi}\colon\mathbb{Q}\mathcal{F}^{0}\longrightarrow\mathbb{Q}\langle\mathcal{B}\rangle^{0}\otimes\mathbb{Q}\langle\mathcal{B}\rangle
\]
by
\[
\varphi^{\pi}\coloneqq(\pi\otimes\pi_{z})\circ\varphi.
\]
This is well-defined by \eqref{eq:phi_Fn_image}.
\end{defn}

We denote by $\left(Z^{\mathrm{SZ}}_{q}\right)^{(2)}$ the $\mathbb{Q}$-linear
map
\[
\mathbb{Q}\langle\mathcal{B}\rangle^{0}\otimes\mathbb{Q}\langle\mathcal{B}\rangle^{0}\longrightarrow\mathbb{C}\qquad;\qquad u\otimes v\mapsto Z^{\mathrm{SZ}}_{q}(u)Z^{\mathrm{SZ}}_{q}(v).
\]

\begin{lem}
\label{lem:rescal_1}For $u\in\mathbb{Q}\mathcal{F}^{0}$, we have
\[
\operatorname*{Res}_{z=1}\frac{E^{(2)}(\varphi(u))(z)}{z(z-1)}\,dz=\left(Z^{\mathrm{SZ}}_{q}\right)^{(2)}\circ(\mathrm{id}\otimes\operatorname{reg}_{*})\circ\varphi^{\pi}(u).
\]
\end{lem}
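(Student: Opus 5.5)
By linearity it suffices to treat a single pure tensor in the image of $\varphi$. By \eqref{eq:phi_Fn_image}, for $u\in\mathbb{Q}\mathcal{F}^{0}$ we can write $\varphi(u)=\sum_{j}c_{j}\otimes w_{j}$ with $c_{j}\in\mathcal{C}$ and $w_{j}\in\mathcal{C}^{0}_{z}$. Then
\[
E^{(2)}(\varphi(u))(z)=\sum_{j}E(c_{j})\,E(w_{j})(z),
\]
where each $E(c_{j})$ is a constant. Hence the residue on the left-hand side equals
\[
\sum_{j}E(c_{j})\operatorname*{Res}_{z=1}\frac{E(w_{j})(z)}{z(z-1)}\,dz.
\]
The right-hand side equals $\sum_{j}Z^{\mathrm{SZ}}_{q}(\pi(c_{j}))\,Z^{\mathrm{SZ}}_{q}(\operatorname{reg}_{*}\pi_{z}(w_{j}))$. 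The statement therefore reduces to two separate identities, one for constants and one for the $z$-dependent factors:
\[
E(c)=Z^{\mathrm{SZ}}_{q}(\pi(c))\quad(c\in\mathcal{C}),\qquad
\operatorname*{Res}_{z=1}\frac{E(w)(z)}{z(z-1)}\,dz=Z^{\mathrm{SZ}}_{q}\bigl(\operatorname{reg}_{*}\pi_{z}(w)\bigr)\quad(w\in\mathcal{C}^{0}_{z}).
\]

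For the second identity, I will use the expansion at the end of \Cref{sec:qMZVs}. For $w=\mathbb{L}_{k_{1},\dots,k_{d}}(q^{n_{1}},\dots,q^{n_{d-1}},zq^{n_{d}})$ with $0\le n_{i}\le k_{i}$, the binomial identity gives
\[
E(w)(z)=L^{\mathrm{SZ}}_{q}\bigl(F(k_{1},\dots,k_{d};n_{1},\dots,n_{d})\bigr)(z)=L^{\mathrm{SZ}}_{q}(\pi_{z}(w))(z).
\]
This holds as meromorphic functions, since both sides agree near $z=0$ and continue meromorphically. For $w=\mathbb{L}(\emptyset)$ both sides equal $1$. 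Applying \Cref{thm:residue_and_stuffle_regularization} to $\pi_{z}(w)\in\mathbb{Q}\langle\mathcal{B}\rangle$ then gives the second identity directly.

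For the first identity, take $c=\mathbb{L}_{k_{1},\dots,k_{d}}(q^{n_{1}},\dots,q^{n_{d}})\in\mathcal{C}$. Here $n_{i}\ge0$ and $n_{d}\ge1$, by \eqref{eq:condition_qMPL} and the definition of $\mathcal{F}$. The same binomial expansion at $z=1$ gives $E(c)=L^{\mathrm{SZ}}_{q}(F(\boldsymbol{k};\boldsymbol{n}))(1)$. It remains to check that $F(\boldsymbol{k};\boldsymbol{n})\in\mathbb{Q}\langle\mathcal{B}\rangle^{0}$. This holds because $n_{d}\ge1$ forces the last letter to be $b_{k_{d}-s_{d}}$ with $s_{d}\le k_{d}-n_{d}$, so its index is at least $1$ and no word ends in $b_{0}$.

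The final step is to evaluate $L^{\mathrm{SZ}}_{q}(F(\boldsymbol{k};\boldsymbol{n}))$ at $z=1$. For words not ending in $b_{0}$ the defining series converges at $z=1$, so the value equals $Z^{\mathrm{SZ}}_{q}$ of the word, as noted before \Cref{thm:residue_and_stuffle_regularization}. Alternatively, one can apply \Cref{thm:residue_and_stuffle_regularization} with $\operatorname{reg}_{*}=\mathrm{id}$ on $\mathbb{Q}\langle\mathcal{B}\rangle^{0}$, since the residue of a function holomorphic at $1$ divided by $z(z-1)$ is its value at $1$. The empty case $c=\mathbb{L}(\emptyset)$ is trivial.

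The only point I expect to need care is the meromorphic-continuation bookkeeping: $E(w)$ is defined by continuation, and $E(w)=L^{\mathrm{SZ}}_{q}(\pi_{z}(w))$ as meromorphic functions by the identity theorem. Everything else is a direct assembly of \Cref{thm:residue_and_stuffle_regularization} with the binomial identity.
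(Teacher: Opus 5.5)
Your proposal is correct and takes the same route as the paper: expand $\varphi(u)$ using \eqref{eq:phi_Fn_image}, identify $E(c)=Z^{\mathrm{SZ}}_{q}(\pi(c))$ and $E(w)=L^{\mathrm{SZ}}_{q}(\pi_{z}(w))$ through the binomial identity, and apply \Cref{thm:residue_and_stuffle_regularization} to each $z$-dependent factor. You also spell out two points the paper leaves implicit, and both are right: $n_{d}\geq1$ forces $\pi(c)\in\mathbb{Q}\langle\mathcal{B}\rangle^{0}$, and the identity theorem makes $E(w)=L^{\mathrm{SZ}}_{q}(\pi_{z}(w))$ hold as meromorphic functions.
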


\begin{proof}
By the definitions of $\pi$, $\pi_{z}$, and $F$, we have
\[
E(a)=Z^{\mathrm{SZ}}_{q}(\pi(a)),\qquad E(v)=L^{\mathrm{SZ}}_{q}(\pi_{z}(v))
\]
for $a\in\mathbb{Q}\mathcal{C}$ and $v\in\mathbb{Q}\mathcal{C}^{0}_{z}$,
respectively. Write
\[
\varphi(u)=\sum_{j}a_{j}\otimes v_{j},\qquad a_{j}\in\mathbb{Q}\mathcal{C},\quad v_{j}\in\mathbb{Q}\mathcal{C}^{0}_{z}.
\]
Then \Cref{thm:residue_and_stuffle_regularization} gives
\begin{align*}
\operatorname*{Res}_{z=1}\frac{E^{(2)}(\varphi(u))(z)}{z(z-1)}\,dz & =\sum_{j}Z^{\mathrm{SZ}}_{q}(\pi(a_{j}))\operatorname*{Res}_{z=1}\frac{L^{\mathrm{SZ}}_{q}(\pi_{z}(v_{j}))(z)}{z(z-1)}\,dz\\
 & =\sum_{j}Z^{\mathrm{SZ}}_{q}(\pi(a_{j}))Z^{\mathrm{SZ}}_{q}\!\left(\operatorname{reg}_{*}(\pi_{z}(v_{j}))\right)\\
 & =\left(Z^{\mathrm{SZ}}_{q}\right)^{(2)}\circ(\mathrm{id}\otimes\operatorname{reg}_{*})\circ\varphi^{\pi}(u).\qedhere
\end{align*}
\end{proof}

We put
\[
\mathcal{G}\coloneqq\{\mathbb{L}_{k_{1},\dots,k_{d}}(x_{1},\dots,x_{d})\in\mathcal{F}^{0}\,\mid\,d=0\text{ or }x_{d}\neq z\}.
\]
For $u\in\mathbb{Q}\mathcal{G}$, we write $\left.u\right|_{z=1}\in\mathbb{Q}\mathcal{C}$
for the element obtained by setting $z=1$ in $u$.

\begin{lem}
\label{lem:rescal_2}For $u\in\mathbb{Q}\mathcal{G}$, we have
\[
\operatorname*{Res}_{z=1}\frac{E(u)(z)}{z(z-1)}\,dz=Z^{\mathrm{SZ}}_{q}(\pi(\left.u\right|_{z=1})).
\]
\end{lem}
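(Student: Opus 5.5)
We plan to show that for every $u\in\mathcal{G}$ the function $E(u)(z)$ is holomorphic at $z=1$ and that its value there equals $E(\left.u\right|_{z=1})$, which in turn equals $Z^{\mathrm{SZ}}_{q}(\pi(\left.u\right|_{z=1}))$. Granting holomorphy, the residue is easy to compute. Since $1/(z(z-1))$ has a simple pole at $z=1$ with residue $1$, we get
\[
\operatorname*{Res}_{z=1}\frac{E(u)(z)}{z(z-1)}\,dz=E(u)(1).
\]
Moreover, $E(\left.u\right|_{z=1})=Z^{\mathrm{SZ}}_{q}(\pi(\left.u\right|_{z=1}))$ follows from the identity relating $F$ to SZ series (as in the proof of \Cref{lem:rescal_1}), applied with $z=1$. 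By linearity it therefore suffices to treat a single $u=\mathbb{L}_{k_{1},\dots,k_{d}}(x_{1},\dots,x_{d})\in\mathcal{G}$. The case $d=0$ is trivial.

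For holomorphy, we use the series representation
\[
E(u)(z)=\sum_{0=m_{0}<m_{1}<\cdots<m_{d}}\frac{\prod_{i}(x_{i}\cdots x_{d})^{m_{i}-m_{i-1}}}{\prod_{i}(1-q^{m_{i}})^{k_{i}}}
\]
and analyze the partial products $y_{i}\coloneqq x_{i}\cdots x_{d}$. By \eqref{eq:condition_qMPL}, each $y_{i}$ lies in $zq^{\mathbb{Z}}\cup q^{\mathbb{Z}_{>0}}$. Because $u\in\mathcal{F}^{0}$, each $x_{i}$ has the form $vq^{s}$ with $v\in\{1,z,z^{-1}\}$ and $s\ge0$. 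Hence every $y_{i}$ has the shape $zq^{t}$ with $t\ge0$ or $q^{t}$ with $t\ge1$. In particular $y_{d}=x_{d}$, and the hypothesis $x_{d}\neq z$ is exactly what excludes $y_{d}=z$.

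The delicate point is to show that $y_{i}\ne z$ for every $i$, not only for $i=d$. If some $y_{i}=zq^{0}$, the series has a factor $z^{m_{i}-m_{i-1}}$ with no $q$-damping, which would cause a pole at $z=1$. I would first try a combinatorial argument: every $x_{j}$ with $j>i$ contributes a nonnegative power of $q$ besides its $z$-power, and the total $z$-exponent of $y_{i}$ is $1$. So $y_{i}=z$ would force all $s_{j}=0$ for $j\ge i$ and the $z$-parts to telescope. That makes $x_{d}\in\{1,z,z^{-1}\}$, and condition \eqref{eq:condition_qMPL} applied to $y_{d}$ forces $x_{d}=z$, a contradiction.

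However, the exponent of $q$ in $y_{i}$ may not equal $\sum s_{j}$, since $v=z^{-1}$ contributes no $q$-power in $\mathcal{F}^{0}$. The argument as sketched is therefore plausible but not yet checked for $z^{-1}$ entries. If it fails, the fallback is the meromorphic continuation stated in \Cref{sec:Review_qMPL}: poles occur only along $y_{i}\in q^{\mathbb{Z}_{\le0}}$, so it is enough to check that no $y_{i}$ equals $z$ identically. The same combinatorial input is needed there, and I expect this step to be the main obstacle.

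Once all $y_{i}$ are of the form $zq^{t}$ with $t\ge1$ or constants $q^{t}$ with $t\ge1$, the series converges uniformly near $z=1$. Its value at $z=1$ is then the value of the constant formal $q$MPL $\left.u\right|_{z=1}\in\mathcal{C}$, which completes the argument.
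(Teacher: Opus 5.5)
Your proposal is correct and is essentially the paper's proof: once $E(u)$ is holomorphic at $z=1$, the residue is $E(u)(1)=E(u|_{z=1})=Z^{\mathrm{SZ}}_{q}(\pi(u|_{z=1}))$, and the paper simply asserts the holomorphy that you justify through the partial products $y_i$. Your doubt about $z^{-1}$ entries is unfounded, and your combinatorial argument is already complete without the fallback: in $\mathcal{F}^{0}$ every entry has the form $z^{h_j}q^{s_j}$ with $h_j\in\{0,\pm1\}$ and $0\le s_j\le k_j$, so the $q$-exponent of $y_i$ is exactly $\sum_{j\ge i}s_j$, and then $y_i=z$ forces $s_d=0$, whence \eqref{eq:condition_qMPL} forces $x_d=z$ (a contradiction), which gives $y_i\in zq^{\mathbb{Z}_{\ge1}}\cup q^{\mathbb{Z}_{\ge1}}$ and hence uniform convergence for $|z|<|q|^{-1}$.
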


\begin{proof}
Since $u\in\mathbb{Q}\mathcal{G}$, the function $E(u)(z)$ is holomorphic
at $z=1$, and $E(u)(1)=E(\left.u\right|_{z=1})$. Hence, by the definition
of $\pi$, 
\[
\operatorname*{Res}_{z=1}\frac{E(u)(z)}{z(z-1)}\,dz=E(u)(1)=E(\left.u\right|_{z=1})=Z^{\mathrm{SZ}}_{q}\!\left(\pi\!\left(\left.u\right|_{z=1}\right)\right).\qedhere
\]
\end{proof}

We now define the confluence relations. In the following definition,
we denote by $1_{\mathbb{Q}\langle\mathcal{B}\rangle^{0}}$ the unit
of $\mathbb{Q}\langle\mathcal{B}\rangle^{0}$, and $1_{\mathcal{S}}$
the unit of the symmetric algebra of $\mathbb{Q}\langle\mathcal{B}\rangle^{0}$.

\begin{defn}[Confluence relations as algebraic relations]
\label{def:conf_alg}Define $I^{\mathrm{alg}}_{\mathrm{CF}}$ as
the ideal of the symmetric algebra $\mathcal{S}(\mathbb{Q}\langle\mathcal{B}\rangle^{0})$
over $\mathbb{Q}$ generated by $\{1_{\mathbb{Q}\langle\mathcal{B}\rangle^{0}}-1_{\mathcal{S}}\}$
and 
\[
\{m_{\odot}\circ(\mathrm{id}\otimes\operatorname{reg}_{*})\circ\varphi^{\pi}(u)-\pi(\left.u\right|_{z=1})\,\mid\,u\in\mathbb{Q}\mathcal{G}\},
\]
where
\[
m_{\odot}\colon\mathbb{Q}\langle\mathcal{B}\rangle^{0}\otimes\mathbb{Q}\langle\mathcal{B}\rangle^{0}\to\mathcal{S}(\mathbb{Q}\langle\mathcal{B}\rangle^{0})
\]
is the $\mathbb{Q}$-linear map defined by $m_{\odot}(u\otimes v)\coloneqq u\odot v$
where $\odot$ denotes the product in the symmetric algebra.
\end{defn}

\begin{defn}[Confluence relations as linear relations]
We define the $\mathbb{Q}$-vector subspace $I_{\mathrm{CF}}\subset\mathbb{Q}\langle\mathcal{B}\rangle^{0}$
by
\[
I_{\mathrm{CF}}\coloneqq I^{\mathrm{alg}}_{\mathrm{CF}}\cap\mathbb{Q}\langle\mathcal{B}\rangle^{0}.
\]
\end{defn}

Let $\widetilde{Z}^{\mathrm{SZ}}_{q}\colon\mathcal{S}(\mathbb{Q}\langle\mathcal{B}\rangle^{0})\longrightarrow\mathbb{C}$
(resp. $\widetilde{Z}^{\mathrm{SZ}}\colon\mathcal{S}(\mathbb{Q}\langle\mathcal{B}\rangle^{0})\longrightarrow\mathbb{Q}[[q]]$)
be the unique $\mathbb{Q}$-algebra homomorphism extending $Z^{\mathrm{SZ}}_{q}$
(resp. $Z^{\mathrm{SZ}}$). 
\begin{thm}
We have 
\[
I^{\mathrm{alg}}_{\mathrm{CF}}\subset\ker\widetilde{Z}^{\mathrm{SZ}}
\]
 and 
\[
I_{\mathrm{CF}}\subset\ker Z^{\mathrm{SZ}}.
\]
\end{thm}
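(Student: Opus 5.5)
The plan is to first prove the analogous statement for the complex-valued maps $\widetilde{Z}^{\mathrm{SZ}}_{q}$ and $Z^{\mathrm{SZ}}_{q}$, with $q$ fixed and $0<|q|<1$, and then pass to the formal versions. Since $\widetilde{Z}^{\mathrm{SZ}}_{q}$ is a $\mathbb{Q}$-algebra homomorphism, its kernel is an ideal. It therefore suffices to check that each generator of $I^{\mathrm{alg}}_{\mathrm{CF}}$ lies in $\ker\widetilde{Z}^{\mathrm{SZ}}_{q}$.

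For the generator $1_{\mathbb{Q}\langle\mathcal{B}\rangle^{0}}-1_{\mathcal{S}}$, this is immediate from $\zeta^{\mathrm{SZ}}_{q}(\emptyset)=1$ and $\widetilde{Z}^{\mathrm{SZ}}_{q}(1_{\mathcal{S}})=1$.

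For a generator attached to $u\in\mathbb{Q}\mathcal{G}\subset\mathbb{Q}\mathcal{F}^{0}$, we have $\widetilde{Z}^{\mathrm{SZ}}_{q}\circ m_{\odot}=(Z^{\mathrm{SZ}}_{q})^{(2)}$ on $\mathbb{Q}\langle\mathcal{B}\rangle^{0}\otimes\mathbb{Q}\langle\mathcal{B}\rangle^{0}$, because $\widetilde{Z}^{\mathrm{SZ}}_{q}$ is multiplicative. Hence
\[
\widetilde{Z}^{\mathrm{SZ}}_{q}\bigl(m_{\odot}\circ(\mathrm{id}\otimes\operatorname{reg}_{*})\circ\varphi^{\pi}(u)\bigr)=\operatorname*{Res}_{z=1}\frac{E^{(2)}(\varphi(u))(z)}{z(z-1)}\,dz
\]
by \Cref{lem:rescal_1}. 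By \eqref{eq:pre_conf}, which follows from \Cref{prop:evaluation_phi}, this residue equals $\operatorname*{Res}_{z=1}E(u)(z)/(z(z-1))\,dz$. By \Cref{lem:rescal_2}, that in turn equals $Z^{\mathrm{SZ}}_{q}(\pi(u|_{z=1}))$, so the generator is killed. This gives $I^{\mathrm{alg}}_{\mathrm{CF}}\subset\ker\widetilde{Z}^{\mathrm{SZ}}_{q}$ for every $q$ with $0<|q|<1$.

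To pass to $\widetilde{Z}^{\mathrm{SZ}}$, take $w\in I^{\mathrm{alg}}_{\mathrm{CF}}$. Then $\widetilde{Z}^{\mathrm{SZ}}(w)\in\mathbb{Q}[[q]]$ is a finite $\mathbb{Q}$-combination of products of the power series $Z^{\mathrm{SZ}}(b_{k_1}\cdots b_{k_d})$. Each of these power series converges on $|q|<1$ to the holomorphic function $q\mapsto\zeta^{\mathrm{SZ}}_{q}(k_1,\dots,k_d)$, using absolute convergence when $k_d\ge1$. So $\widetilde{Z}^{\mathrm{SZ}}(w)$ is the Taylor expansion at $0$ of a holomorphic function on the unit disc. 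That function vanishes on the punctured disc by the previous step, so by the identity theorem all of its coefficients are zero.

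Finally, $I_{\mathrm{CF}}=I^{\mathrm{alg}}_{\mathrm{CF}}\cap\mathbb{Q}\langle\mathcal{B}\rangle^{0}$. On $\mathbb{Q}\langle\mathcal{B}\rangle^{0}$, viewed as degree-one elements of $\mathcal{S}$, the map $\widetilde{Z}^{\mathrm{SZ}}$ restricts to $Z^{\mathrm{SZ}}$, so $I_{\mathrm{CF}}\subset\ker Z^{\mathrm{SZ}}$.

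The analytic content is contained in \Cref{thm:residue_and_stuffle_regularization} and \Cref{prop:evaluation_phi}, which we may assume. The only step that needs genuine care is therefore the passage from fixed complex $q$ to formal $q$. There one must check that the convergent series really equals the formal power series, and that it is holomorphic in $q$ on the punctured disc. Both follow from locally uniform absolute convergence of the defining series in $q$.
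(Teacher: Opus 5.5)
Your proposal is correct and follows the paper's proof. Both check the generators via \eqref{eq:pre_conf}, \Cref{lem:rescal_1}, and \Cref{lem:rescal_2} to get $I^{\mathrm{alg}}_{\mathrm{CF}}\subset\ker\widetilde{Z}^{\mathrm{SZ}}_{q}$ for each fixed $q$, then pass to the formal map because $q$ is arbitrary with $0<|q|<1$; your identity-theorem argument just spells out the step the paper states in one line.
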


\begin{proof}
By \eqref{eq:pre_conf}, \Cref{lem:rescal_2} and \Cref{lem:rescal_1},
we have
\[
\widetilde{Z}^{\mathrm{SZ}}_{q}(u)=0
\]
for any $u\in I^{\mathrm{alg}}_{\mathrm{CF}}$. Since $q$ is an arbitrary
complex number satisfying $0<|q|<1$, we also have
\[
\widetilde{Z}^{\mathrm{SZ}}(u)=0.
\]
Hence, we have the first inclusion $I^{\mathrm{alg}}_{\mathrm{CF}}\subset\ker\widetilde{Z}^{\mathrm{SZ}}$.
The second inclusion is immediate by definition.
\end{proof}

\begin{conjecture}
We have 
\[
I^{\mathrm{alg}}_{\mathrm{CF}}=\ker\widetilde{Z}^{\mathrm{SZ}}
\]
 and 
\[
I_{\mathrm{CF}}=\ker Z^{\mathrm{SZ}}.
\]
\end{conjecture}

\begin{rem}
\label{rem:conf_alg_lin}Although $I_{\mathrm{CF}}$ is defined as
the linear part of $I^{\mathrm{alg}}_{\mathrm{CF}}$, the full algebraic
ideal can be recovered from it after identifying the symmetric product
with the stuffle product. More precisely, let 
\[
m_{*}\colon\mathcal{S}(\mathbb{Q}\langle\mathcal{B}\rangle^{0})\to(\mathbb{Q}\langle\mathcal{B}\rangle^{0},*)
\]
 be the unique $\mathbb{Q}$-algebra homomorphism whose restriction
to $\mathbb{Q}\langle\mathcal{B}\rangle^{0}$ is the identity. We
will prove in \Cref{cor:alg_lin_conf_by_stuffle} below that
\[
I^{\mathrm{alg}}_{\mathrm{CF}}=m^{-1}_{*}(I_{\mathrm{CF}}).
\]
\end{rem}

\section{\label{sec:Conf_and_Stuffle}Confluence relations and the stuffle
product}

Define the stuffle product
\[
*\colon\mathbb{Q}\mathcal{C}\times\mathbb{Q}\hat{\mathcal{F}}\to\mathbb{Q}\hat{\mathcal{F}}
\]
as the $\mathbb{Q}$-bilinear map determined by
\[
\mathbb{L}_{k_{1},\dots,k_{r}}(x_{1},\dots,x_{r})*\mathbb{L}_{l_{1},\dots,l_{s}}(y_{1},\dots,y_{s})=\sum^{r+s}_{t=\max(r,s)}\sum_{(f,g)\in\mathrm{Stu}_{t}(r,s)}\mathbb{L}_{m_{1},\dots,m_{t}}(z_{1},\dots,z_{t}),
\]
where $\mathrm{Stu}_{t}(r,s)$ is the set of pairs of strictly increasing
functions $f\colon\{1,\dots,r\}\to\{1,\dots,t\}$ and $g\colon\{1,\dots,s\}\to\{1,\dots,t\}$
such that
\[
\mathrm{Im}(f)\cup\mathrm{Im}(g)=\{1,\dots,t\},
\]
and
\[
(m_{i},z_{i})\coloneqq\begin{cases}
(k_{j},x_{j}) & \text{if }f^{-1}(\{i\})=\{j\}\text{ and }g^{-1}(\{i\})=\emptyset\\
(l_{j},y_{j}) & \text{if }f^{-1}(\{i\})=\emptyset\text{ and }g^{-1}(\{i\})=\{j\}\\
(k_{j}+l_{j'},x_{j}y_{j'}) & \text{if }f^{-1}(\{i\})=\{j\}\text{ and }g^{-1}(\{i\})=\{j'\}.
\end{cases}
\]
For example, 
\[
\mathbb{L}_{k}(x)*\mathbb{L}_{l}(y)=\mathbb{L}_{k,l}(x,y)+\mathbb{L}_{l,k}(y,x)+\mathbb{L}_{k+l}(xy).
\]
By definition, we also have
\[
u*\mathbb{L}(\emptyset)=u,\qquad\mathbb{L}(\emptyset)*v=v
\]
for $u\in\mathbb{Q}\mathcal{C}$ and $v\in\mathbb{Q}\mathcal{F}$.
This stuffle product is compatible with the stuffle product on $\mathbb{Q}\langle\mathcal{B}\rangle$;
that is, we have
\begin{equation}
\pi(u*v)=\pi(u)*\pi(v)\label{eq:pi_and_stuffle}
\end{equation}
for $u,v\in\mathbb{Q}\mathcal{C}$. Furthermore, for $u\in\mathbb{Q}\mathcal{C}$,
we write $l^{*}_{u}$ for the map $\mathbb{Q}\mathcal{F}\to\mathbb{Q}\mathcal{F}$
defined by $v\mapsto u*v$.
\begin{lem}
\label{lem:der_and_stuffle}For $u\in\mathbb{Q}\mathcal{C}$, we have
the following identity of maps from $\mathbb{Q}(\mathcal{F}\cap\hat{\mathcal{C}}_{z})$
to $\mathbb{Q}\mathcal{F}\otimes\Omega$:
\[
\partial\circ l^{*}_{u}=(l^{*}_{u}\otimes\mathrm{id}_{\Omega})\circ\partial.
\]
\end{lem}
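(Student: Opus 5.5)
The plan is to verify the identity directly on a basis element $v\in\mathcal{F}\cap\hat{\mathcal{C}}_{z}$ and a basis element $u=\mathbb{L}_{l_{1},\dots,l_{r}}(y_{1},\dots,y_{r})\in\mathcal{C}$, extending by bilinearity. Write $v=\mathbb{L}_{k_{1},\dots,k_{d}}(q^{n_{1}},\dots,q^{n_{d-1}},zq^{m})$. The case $v=\mathbb{L}(\emptyset)$ is trivial: then $u*v=u\in\mathcal{C}$, and both sides vanish because the standard path of a $z$-independent element has length $0$.

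\textbf{Step 1: structure of $u*v$.} Each term $w$ of $u*v$ has exactly one $z$-dependent position. At that position, either the last entry $zq^{m}$ of $v$ stands alone with weight $k_{d}$, or it is merged with some $y_{j}$, giving entry $y_{j}zq^{m}$ of weight $l_{j}+k_{d}\geq2$. All entries of $u$ placed after it are constants. First I would check that $w\in\mathcal{F}$. This holds because the exponent bounds $0\le s\le k$ are additive under merging, and condition \eqref{eq:condition_qMPL} holds because tails of constants of $u$ lie in $q^{\mathbb{Z}_{>0}}$. Since there is a single $z$-dependent position, $\operatorname{Std}(w)=(w,w(zq))$ and $\partial w=D(w,w(zq))$. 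Likewise $\partial v=D(v,v(zq))$. Thus the whole statement reduces to comparing single-position differences from \Cref{def:D}.

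\textbf{Step 2: the case $k_{d}>1$.} Then every $w$ has weight $>1$ at the $z$-position, so each $\partial w$ is $w$ with that weight lowered by one, tensored with $e_{0}$. Lowering the weight commutes with the stuffle bookkeeping: the pairs $(f,g)\in\mathrm{Stu}_{t}(r,d)$ are the same. Hence $\sum_{w}\partial w=\bigl(u*\mathbb{L}_{k_{1},\dots,k_{d}-1}(\dots,zq^{m})\bigr)\otimes e_{0}=(l^{*}_{u}\otimes\mathrm{id})\partial v$.

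\textbf{Step 3: the case $k_{d}=1$, the main obstacle.} Here $\partial v=-\mathbb{L}_{k_{1},\dots,k_{d-1}}(q^{n_{1}},\dots,q^{n_{d-1}}zq^{m})\otimes e_{q^{-m}}$. The right-hand term vanishes since $j=d$; if $d=1$, the convention gives $-\mathbb{L}(\emptyset)\otimes e_{q^{-m}}$. On the left, the terms $w$ split into two families.

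\emph{Type (a):} the $z$-entry stands alone with weight $1$. Its difference $\partial w$ produces a merge-left term $\otimes(-e_{q^{-m}})$ and a merge-right term $\otimes(e_{q^{-m}}-e_{0})$; the latter is present only if some $u$-entry follows.

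\emph{Type (b):} the $z$-entry is merged with $y_{j}$. Its difference $\partial w$ is a weight-lowered term $\otimes e_{0}$, whose $z$-entry is $y_{j}zq^{m}$ with weight $l_{j}$.

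I would organize a telescoping argument in three checks.

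\begin{enumerate}
\item The $e_{0}$-part of the merge-right terms of type (a) cancels exactly against the type (b) terms. Absorbing $zq^{m}$ into the following $y_{j}$ yields precisely the word of type (b) with that $y_{j}$ merged and lowered to weight $l_{j}$, and the signs are opposite. This requires a bijection between type (a) words with some $y_{j}$ immediately after the $z$-entry and type (b) words.
\item The $e_{q^{-m}}$-part of a merge-right term of a type (a) word $w$ cancels the merge-left term of the type (a) word $w'$ obtained by moving the $z$-entry one step past that $y_{j}$. Both produce the same word, namely $y_{j}zq^{m}$ in that slot, with opposite signs and the same index $q^{-m}$.
\item The surviving merge-left terms are those in which the $z$-entry absorbs an entry coming from $v$: either $q^{n_{d-1}}$ alone, or $q^{n_{d-1}}$ already merged with some $y_{i}$. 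Enumerating these recovers exactly the stuffle expansion of $u*\mathbb{L}_{k_{1},\dots,k_{d-1}}(\dots,q^{n_{d-1}}zq^{m})$, tensored with $-e_{q^{-m}}$. This matches $(l^{*}_{u}\otimes\mathrm{id})\partial v$.
\end{enumerate}

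The delicate points, which I expect to be the main work, are three. The first is the bookkeeping that these matchings are bijections on $\mathrm{Stu}_{t}$, including the boundary case $j=1$, where the convention replaces the merge-left term by deletion, and the case $d=1$. The second is that the $e$-index is always $q^{-m}$, independent of the merged constants, because only weight-one unmerged $z$-entries contribute $e$'s other than $e_{0}$. The third is that every intermediate word is a genuine formal $q$MPL in $\mathcal{F}$.
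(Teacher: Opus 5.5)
Your proposal is correct and follows essentially the same route as the paper: you reduce to single-step standard paths, handle $k_d>1$ by weight-lowering, and for $k_d=1$ split $\partial(u*v)$ into merge-left, merge-right and weight-lowered terms. Your three matchings are exactly the paper's identities $T_3=T_2=\sum_{\alpha}U(\alpha)$ and $T_1=u*\mathbb{L}_{\dots}(\dots,y_{s-1}y_s)+\sum_{\alpha}U(\alpha)$; you phrase them as pairwise bijective cancellations rather than going through the auxiliary sums $U(\alpha)$.
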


A proof is given in \Cref{subsec:proof_der_and_stuffle}.

\begin{lem}
\label{lem:phi_and_stuffle}For $u\in\mathbb{Q}\mathcal{C}$ and $v\in\mathbb{Q}(\mathcal{F}\cap\hat{\mathcal{C}}_{z})$,
we have
\[
\varphi(u*v)=u\otimes v.
\]
\end{lem}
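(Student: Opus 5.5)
We prove $\varphi(u*v)=u\otimes v$ by induction on the weight of $v$. By bilinearity we may assume $u\in\mathcal{C}$ and $v\in\mathcal{F}\cap\hat{\mathcal{C}}_{z}$. Implicit in the statement is that $u*v\in\mathbb{Q}\mathcal{F}$, so that $\varphi(u*v)$ is defined. We take this from the context of \Cref{lem:der_and_stuffle}, whose right-hand side already requires $\partial$ to be applied to $u*v$. Membership in $\mathcal{F}$ would anyway follow from the definition: every merged argument is $q^{a+b}$ or $zq^{a+b}$ with $0\le a+b\le k+l$, so at least the underlying $n$ can be chosen consistently.

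\emph{Base case $v=\mathbb{L}(\emptyset)$.} Here $u*v=u\in\mathcal{C}$. Since $u$ is $z$-independent, its standard path has length $0$, so $\partial u=0$. The recursion then gives $\varphi(u)=\mathrm{Const}(u)=u\otimes\mathbb{L}(\emptyset)$, which is the claim.

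\emph{Inductive step, $v\neq\mathbb{L}(\emptyset)$.} Every term of $u*v$ has its last argument equal either to the last argument of $v$ or to that argument times a constant. Hence every term is $z$-dependent, and $\mathrm{Const}(u*v)=0$. \Cref{lem:der_and_stuffle} then gives
\[
\varphi(u*v)=(\mathrm{id}\otimes S)\circ(\varphi\otimes\mathrm{id}_{\Omega})\bigl((l^{*}_{u}\otimes\mathrm{id}_{\Omega})(\partial v)\bigr).
\]
Since $v=\mathbb{L}_{k_1,\dots,k_d}(q^{n_1},\dots,zq^{n_d})$, the only $z$-dependent position is the last, so $\partial v=D(v,v(zq))$ is a single term of \Cref{def:D}. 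We split into two cases.

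If $k_d>1$, then $\partial v=v'\otimes e_0$ with $v'$ the element obtained by lowering $k_d$ by one. If $k_d=1$, then $\partial v=-v''\otimes e_{q^{-m}}$ with $v''=\mathbb{L}_{k_1,\dots,k_{d-1}}(q^{n_1},\dots,q^{n_{d-1}}\cdot q^{n_d}z)$ (or $\mathbb{L}(\emptyset)$ if $d=1$), where $m=n_d$. In both cases $v'$ and $v''$ lie in $\mathcal{F}\cap\hat{\mathcal{C}}_z$ and have smaller weight, so the induction hypothesis gives $\varphi(u*v')=u\otimes v'$, and similarly for $v''$. Applying $\mathrm{id}\otimes S$ then yields $u\otimes S(v'\otimes e_0)=u\otimes v$ in the first case. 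In the second case it yields $-u\otimes S(v''\otimes e_{q^{-m}})$, and the definition of $S$ gives $-S(v''\otimes e_{q^{-m}})=\mathbb{L}_{k_1,\dots,k_{d-1},1}(q^{n_1},\dots,q^{n_{d-1}},zq^{n_d})=v$. This uses that the splitting of $q^{n_{d-1}+n_d}z$ into $q^{n_{d-1}}$ and $zq^{n_d}$ recovers $v$; for $d=1$ we use $S(\mathbb{L}(\emptyset)\otimes e_{q^{-m}})=-\mathbb{L}_1(zq^m)$.

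The main obstacle is bookkeeping rather than analysis. One must make sure that the $\mathcal{F}$-membership and well-definedness issues behave, i.e.\ that $l^*_u$ maps the relevant elements into $\mathbb{Q}\mathcal{F}$ so that $\varphi$ can be applied. One must also check that the sign and index conventions of $D$ for $x_d=zq^{m}$ match those of $S$: the second term of $D$ vanishes when $j=d$, leaving $-(\cdot)\otimes e_{q^{-m}}$. Beyond that, all the structural work is done by \Cref{lem:der_and_stuffle}.
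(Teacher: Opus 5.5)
Your proof is correct and follows the paper's argument: induction on the weight of $v$, $\mathrm{Const}(u*v)=0$, \Cref{lem:der_and_stuffle}, the induction hypothesis, and finally $S(\partial v)=v$, which you verify case by case where the paper only cites the definitions of $\partial$ and $S$. One small slip: the last argument of a term of $u*v$ need not come from $v$ (for example, $\mathbb{L}_{l,k}(zq,q)$ occurs in $\mathbb{L}_{k}(q)*\mathbb{L}_{l}(zq)$), so the right reason every term is $z$-dependent is that each term contains the $z$-dependent entry of $v$, possibly merged with a constant entry of $u$.
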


\begin{proof}
By $\mathbb{Q}$-bilinearity, it suffices to consider the case $v\in\mathcal{F}\cap\hat{\mathcal{C}}_{z}$.
We proceed by induction on the weight of $v$. The case $v=\mathbb{L}(\emptyset)$
is obvious since $\varphi(u*\mathbb{L}(\emptyset))=\varphi(u)=u\otimes\mathbb{L}(\emptyset)$.
Suppose that $v\neq\mathbb{L}(\emptyset)$. Then, every term of $u*v$
depends on $z$, and hence $\operatorname{Const}(u*v)=0$. Furthermore,
$\partial v\in\mathbb{Q}(\mathcal{F}\cap\hat{\mathcal{C}}_{z})\otimes\Omega$.
Therefore, we have
\[
\begin{aligned} & \varphi(u*v)\\
 & =(\operatorname{id}_{\mathbb{Q}\mathcal{C}}\otimes S)\circ(\varphi\otimes\operatorname{id}_{\Omega})\circ\partial(u*v)\qquad\qquad(\text{by \ensuremath{\operatorname{Const}(u*v)=0} and the definition of \ensuremath{\varphi}})\\
 & =(\operatorname{id}_{\mathbb{Q}\mathcal{C}}\otimes S)\circ(\varphi\otimes\operatorname{id}_{\Omega})\circ(l^{*}_{u}\otimes\operatorname{id}_{\Omega})\circ\partial(v)\qquad\qquad(\text{by \cref{lem:der_and_stuffle}})\\
 & =u\otimes S(\partial(v))\qquad\qquad(\text{by the induction hypothesis})\\
 & =u\otimes v\qquad\qquad(\text{by definition of \ensuremath{\partial} and \ensuremath{S}}).
\end{aligned}
\]
This completes the induction.
\end{proof}

\begin{thm}
\label{thm:stuffle_in_conf}We have
\[
u\odot v-u*v\in I^{\mathrm{alg}}_{\mathrm{CF}}
\]
for $u,v\in\mathbb{Q}\langle\mathcal{B}\rangle^{0}$, where $\odot$
denotes the product in the symmetric algebra. 
\end{thm}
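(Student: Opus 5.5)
The plan is to realize $u*v$ as the image of a confluence generator built from an element of $\mathbb{Q}\mathcal{G}$ of product type. By bilinearity we may take $u,v$ to be words in $\mathbb{Q}\langle\mathcal{B}\rangle^{0}$ (the case $u=1$ or $v=1$ is handled by the generator $1_{\mathbb{Q}\langle\mathcal{B}\rangle^{0}}-1_{\mathcal{S}}$). First we lift $u$ and $v$ to formal $q$MPLs. Every word $b_{k_1}\cdots b_{k_d}$ with $k_d\ge1$ equals $\pi(\mathbb{L}_{k_1,\dots,k_d}(q^{k_1},\dots,q^{k_d}))$: indeed $F(k;k)=b_k$, so no binomial expansion occurs. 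A leading $b_0$ entry needs care, since $\mathbb{L}$ requires positive indices. We handle it by choosing $n_i=k_i$ with $k_i\ge1$ wherever possible, and by noting that $\pi$ is surjective onto $\mathbb{Q}\langle\mathcal{B}\rangle^{0}$ by a triangular argument: $F(k;n)=b_k+{}$(terms with smaller indices), and the word $b_0$ at an inner position arises from $k_i=1,n_i=0$ minus $b_1$. So we choose $a,c\in\mathbb{Q}\mathcal{C}$ with $\pi(a)=u$ and $\pi(c)=v$.

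Next we form $c_z\in\mathbb{Q}(\mathcal{F}^0\cap\hat{\mathcal{C}}_z)$ by replacing the last argument $q^{n_d}$ of each term of $c$ by $zq^{n_d}$. We then want $a*c_z\in\mathbb{Q}\mathcal{G}$. All arguments lie in $\{q^s: 0\le s\le k_i\}\cup\{zq^s\}$, so the terms lie in $\mathcal{F}^0$. The condition \eqref{eq:condition_qMPL} holds because the suffix products are either $z$ times a power of $q$, or a positive power of $q$. The only risk is that the last argument is exactly $z$. To avoid this, we arrange that $n_d\ge1$ in $c$ (so $zq^{n_d}\ne z$) and that the last entry of $a$ also has $n\ge1$; merged entries then have a $q$-exponent at least $1$. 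This is possible because the last letter $b_{k_d}$ with $k_d\ge1$ is lifted with $n_d=k_d\ge1$.

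By \Cref{lem:phi_and_stuffle}, $\varphi(a*c_z)=a\otimes c_z$, so $\varphi^{\pi}(a*c_z)=\pi(a)\otimes\pi_z(c_z)=u\otimes v$. Since $v\in\mathbb{Q}\langle\mathcal{B}\rangle^{0}$, we have $\operatorname{reg}_*(v)=v$, so $m_\odot\circ(\mathrm{id}\otimes\operatorname{reg}_*)\circ\varphi^{\pi}(a*c_z)=u\odot v$. On the other side, $(a*c_z)|_{z=1}=a*c$, and by \eqref{eq:pi_and_stuffle} we get $\pi(a*c)=\pi(a)*\pi(c)=u*v$. Thus the generator of $I^{\mathrm{alg}}_{\mathrm{CF}}$ attached to $a*c_z$ is exactly $u\odot v-u*v$.

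The main obstacle is the lifting step: producing lifts $a,c$ whose terms all have last argument exponent $\ge1$, so that $a*c_z$ avoids the excluded last argument $z$, while still spanning all of $\mathbb{Q}\langle\mathcal{B}\rangle^{0}$. This covers words with interior $b_0$'s and a final $b_k$ with $k\ge1$. I would prove it by induction on the word using the triangularity of $F$ in each position. At the last position only $n_d\ge1$ is needed, since $F(k_d;n_d)$ with $1\le n_d\le k_d$ spans $b_1,\dots,b_{k_d}$ and never produces $b_0$. A secondary check is that $a*c_z$ indeed lies in $\mathbb{Q}\mathcal{F}^{0}$: merged entries $q^{s}\cdot zq^{t}$ need $s+t\le k+l$, which holds.
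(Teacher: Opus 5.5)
Your proposal is correct and is essentially the paper's proof. You lift $u$ to $\mathbb{Q}\mathcal{C}$ and $v$ to $\mathbb{Q}(\mathcal{C}^{0}_{z}\cap\mathcal{G})$, then use \Cref{lem:phi_and_stuffle} to obtain $u\odot v$ and \eqref{eq:pi_and_stuffle} to obtain $u*v$. You also spell out two points the paper leaves implicit: that the lifts exist, and that $a*c_z\in\mathbb{Q}\mathcal{G}$. The last-entry conditions $n_d\geq1$ that you say you arrange are in fact automatic for elements of $\mathcal{C}$, since \eqref{eq:condition_qMPL} at $i=d$ forces $x_d\in q^{\mathbb{Z}_{>0}}$.
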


\begin{proof}
Since $\pi(\mathbb{Q}\mathcal{C})=\mathbb{Q}\langle\mathcal{B}\rangle^{0}$
and $\pi_{z}(\mathbb{Q}(\mathcal{C}^{0}_{z}\cap\mathcal{G}))=\mathbb{Q}\langle\mathcal{B}\rangle^{0}$,
there exist $\hat{u}\in\mathbb{Q}\mathcal{C}$ and $\hat{v}\in\mathbb{Q}(\mathcal{C}^{0}_{z}\cap\mathcal{G})$
such that $\pi(\hat{u})=u$ and $\pi_{z}(\hat{v})=v$. Then, 
\[
m_{\odot}\circ(\mathrm{id}\otimes\operatorname{reg}_{*})\circ\varphi^{\pi}(\hat{u}*\hat{v})-\pi(\left.\hat{u}*\hat{v}\right|_{z=1})\in I^{\mathrm{alg}}_{\mathrm{CF}}.
\]
By \Cref{lem:phi_and_stuffle}, we have
\[
m_{\odot}\circ(\mathrm{id}\otimes\operatorname{reg}_{*})\circ\varphi^{\pi}(\hat{u}*\hat{v})=u\odot v.
\]
Furthermore, by \eqref{eq:pi_and_stuffle}, we have
\[
\pi(\left.\hat{u}*\hat{v}\right|_{z=1})=u*v.
\]
Hence, the theorem is proved.
\end{proof}

\begin{cor}
\label{cor:alg_lin_conf_by_stuffle}Let $m_{*}$ be as in \Cref{rem:conf_alg_lin}.
Then $m_{*}$ induces a $\mathbb{Q}$-linear isomorphism
\[
I^{\mathrm{alg}}_{\mathrm{CF}}/\ker(m_{*})\simeq I_{\mathrm{CF}}.
\]
Equivalently,
\[
m_{*}(I^{\mathrm{alg}}_{\mathrm{CF}})=I_{\mathrm{CF}},\qquad I^{\mathrm{alg}}_{\mathrm{CF}}=m^{-1}_{*}(I_{\mathrm{CF}}).
\]
In particular, $I_{\mathrm{CF}}$ is an ideal of $(\mathbb{Q}\langle\mathcal{B}\rangle^{0},*)$.
\end{cor}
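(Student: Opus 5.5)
The plan is to combine \Cref{thm:stuffle_in_conf} with the fact that $I^{\mathrm{alg}}_{\mathrm{CF}}$ contains $1_{\mathbb{Q}\langle\mathcal{B}\rangle^{0}}-1_{\mathcal{S}}$, in order to first determine $\ker(m_{*})$ and then move everything into the linear part. The first step is to show
\[
\ker(m_{*})\subset I^{\mathrm{alg}}_{\mathrm{CF}}.
\]
Let $J$ be the ideal of $\mathcal{S}(\mathbb{Q}\langle\mathcal{B}\rangle^{0})$ generated by $1_{\mathbb{Q}\langle\mathcal{B}\rangle^{0}}-1_{\mathcal{S}}$ and by all $u\odot v-u*v$ with $u,v\in\mathbb{Q}\langle\mathcal{B}\rangle^{0}$. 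By \Cref{thm:stuffle_in_conf}, $J\subset I^{\mathrm{alg}}_{\mathrm{CF}}$.

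To prove $\ker(m_{*})\subset J$, work modulo $J$. Every monomial $w_{1}\odot\cdots\odot w_{r}$ with $r\ge 2$ is congruent to $w_{1}*\cdots*w_{r}$ by induction on $r$, using the generators $u\odot v-u*v$ repeatedly. The empty monomial $1_{\mathcal{S}}$ is congruent to $1_{\mathbb{Q}\langle\mathcal{B}\rangle^{0}}$. Hence every $x\in\mathcal{S}$ satisfies $x\equiv m_{*}(x)\pmod J$, where $m_{*}(x)$ is viewed as a degree-one element. In particular, if $m_{*}(x)=0$ then $x\in J$. This gives $\ker(m_{*})\subset J\subset I^{\mathrm{alg}}_{\mathrm{CF}}$, and also the congruence
\[
x-m_{*}(x)\in I^{\mathrm{alg}}_{\mathrm{CF}}\qquad\text{for all }x\in\mathcal{S}.
\]

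Next, show $m_{*}(I^{\mathrm{alg}}_{\mathrm{CF}})=I_{\mathrm{CF}}$. For $x\in I^{\mathrm{alg}}_{\mathrm{CF}}$, the congruence gives $m_{*}(x)\in I^{\mathrm{alg}}_{\mathrm{CF}}\cap\mathbb{Q}\langle\mathcal{B}\rangle^{0}=I_{\mathrm{CF}}$. Conversely, for $y\in I_{\mathrm{CF}}$ we have $m_{*}(y)=y$, since $m_{*}$ restricts to the identity on degree one. So $m_{*}$ restricted to $I^{\mathrm{alg}}_{\mathrm{CF}}$ surjects onto $I_{\mathrm{CF}}$. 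Its kernel is $\ker(m_{*})\cap I^{\mathrm{alg}}_{\mathrm{CF}}=\ker(m_{*})$, which gives $I^{\mathrm{alg}}_{\mathrm{CF}}/\ker(m_{*})\simeq I_{\mathrm{CF}}$.

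For $I^{\mathrm{alg}}_{\mathrm{CF}}=m_{*}^{-1}(I_{\mathrm{CF}})$, the inclusion $\subset$ was just shown. For $\supset$, take $x$ with $m_{*}(x)\in I_{\mathrm{CF}}$. Then
\[
x=(x-m_{*}(x))+m_{*}(x),
\]
and both summands lie in $I^{\mathrm{alg}}_{\mathrm{CF}}$.

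Finally, $I_{\mathrm{CF}}$ is an ideal of $(\mathbb{Q}\langle\mathcal{B}\rangle^{0},*)$. It is the image of the ideal $I^{\mathrm{alg}}_{\mathrm{CF}}$ under the surjective algebra homomorphism $m_{*}$, which is surjective because it is the identity on degree one. The image of an ideal under a surjective homomorphism is an ideal. The only point needing care is that $m_{*}$ is well defined: $(\mathbb{Q}\langle\mathcal{B}\rangle^{0},*)$ is commutative, associative and unital with unit $1$, which follows from the standard properties of the stuffle product recalled in \Cref{sec:qMZVs}. Everything else is formal, and the main step is the congruence $x\equiv m_{*}(x)$ modulo $J$.
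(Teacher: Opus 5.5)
Your proposal is correct and follows the same route as the paper, which simply asserts that $\ker(m_{*})$ is generated by $1_{\mathbb{Q}\langle\mathcal{B}\rangle^{0}}-1_{\mathcal{S}}$ and the elements $u\odot v-u*v$, and then concludes from \Cref{def:conf_alg} and \Cref{thm:stuffle_in_conf}. Your congruence $x\equiv m_{*}(x)\pmod{J}$ is exactly the justification of that generation claim, and your remaining deductions (image, kernel, preimage, and the ideal property) spell out what the paper leaves implicit.
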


Indeed, $\ker(m_{*})$ is generated by
\[
1_{\mathbb{Q}\langle\mathcal{B}\rangle^{0}}-1_{\mathcal{S}}\qquad\text{and}\qquad u\odot v-u*v\quad(u,v\in\mathbb{Q}\langle\mathcal{B}\rangle^{0}),
\]
so the assertions follow from \Cref{def:conf_alg} and \Cref{thm:stuffle_in_conf}.

\section{\label{sec:Conf_and_Duality}A conjectural approach to confluence
relations and dualities}

In this section, we show, assuming a certain conjectural identity,
that the duality relations for $q$MZVs in the BZ and SZ models are
contained in the confluence relations. In \Cref{subsec:BZ_SZ_duality},
after recalling the duality relations for $q$MZVs in the BZ and SZ
models, we define the corresponding $\mathbb{Q}$-vector subspaces
of $\mathbb{Q}\langle\mathcal{B}\rangle^{0}$. In \Cref{subsec:conjectural_duality},
we formulate the conjectural identity. In \Cref{subsec:conf_and_duality},
we derive the stated inclusion.

\subsection{\label{subsec:BZ_SZ_duality}BZ and SZ dualities}

For $a\in\{0,1\}$ and $r\geq0$, let $\{a\}^{r}$ denote $r$ consecutive
copies of $a$. For $\boldsymbol{k}=(k_{1},\ldots,k_{d})\in\mathbb{Z}^{d}_{\geq0}$
with $d\geq0$, put
\[
b_{\boldsymbol{k}}\coloneqq b_{k_{1}}\cdots b_{k_{d}}.
\]

A BZ-admissible index is a tuple $\boldsymbol{k}=(k_{1},\ldots,k_{d})\in\mathbb{Z}^{d}_{\geq1}$
such that $d=0$ or $k_{d}\geq2$. We denote the set of BZ-admissible
indices by $\operatorname{Ind}_{\mathrm{BZ}}$. Every BZ-admissible
index can be written uniquely as
\[
\boldsymbol{k}=\bigl(\{1\}^{d_{1}-1},c_{1}+1,\ldots,\{1\}^{d_{r}-1},c_{r}+1\bigr),\qquad c_{i},d_{i}\geq1.
\]
Define its BZ dual by
\[
\boldsymbol{k}^{\dagger}\coloneqq\bigl(\{1\}^{c_{r}-1},d_{r}+1,\ldots,\{1\}^{c_{1}-1},d_{1}+1\bigr)\in\operatorname{Ind}_{\mathrm{BZ}}.
\]

An SZ-admissible index is a tuple $\boldsymbol{k}=(k_{1},\ldots,k_{d})\in\mathbb{Z}^{d}_{\geq0}$
such that $d=0$ or $k_{d}\geq1$. We denote the set of SZ-admissible
indices by $\operatorname{Ind}_{\mathrm{SZ}}$. Every SZ-admissible
index can be written uniquely as
\[
\boldsymbol{k}=\bigl(\{0\}^{d_{1}-1},c_{1},\ldots,\{0\}^{d_{r}-1},c_{r}\bigr),\qquad c_{i},d_{i}\geq1.
\]
Define its SZ dual by
\[
\boldsymbol{k}^{\vee}\coloneqq\bigl(\{0\}^{c_{r}-1},d_{r},\ldots,\{0\}^{c_{1}-1},d_{1}\bigr)\in\operatorname{Ind}_{\mathrm{SZ}}.
\]

For $\boldsymbol{k}=(k_{1},\dots,k_{d})\in\operatorname{Ind}_{\mathrm{BZ}}$,
put
\[
F_{\mathrm{BZ}}(\boldsymbol{k})\coloneqq F(\boldsymbol{k};k_{1}-1,\dots,k_{d}-1)=(b_{k_{1}}+b_{k_{1}-1})\cdots(b_{k_{d}}+b_{k_{d}-1}).
\]
The BZ $q$MZV is defined by
\[
\begin{aligned}\zeta^{\mathrm{BZ}}_{q}(\boldsymbol{k}) & \coloneqq\sum_{0<m_{1}<\cdots<m_{d}}\prod^{d}_{i=1}\frac{q^{(k_{i}-1)m_{i}}}{(1-q^{m_{i}})^{k_{i}}}\\
 & =Z^{\mathrm{SZ}}_{q}\left(F_{\mathrm{BZ}}(\boldsymbol{k})\right)
\end{aligned}
\]
for $\boldsymbol{k}\in\operatorname{Ind}_{\mathrm{BZ}}$.

The BZ duality relation \cite{Bradley_qMZV} and the SZ duality relation
\cite{Zhao_qMZV_duality} state that
\[
\zeta^{\mathrm{BZ}}_{q}(\boldsymbol{k})=\zeta^{\mathrm{BZ}}_{q}(\boldsymbol{k}^{\dagger})\qquad\bigl(\boldsymbol{k}\in\operatorname{Ind}_{\mathrm{BZ}}\bigr)
\]
and
\[
\zeta^{\mathrm{SZ}}_{q}(\boldsymbol{k})=\zeta^{\mathrm{SZ}}_{q}(\boldsymbol{k}^{\vee})\qquad\bigl(\boldsymbol{k}\in\operatorname{Ind}_{\mathrm{SZ}}\bigr).
\]
Define
\[
V_{\mathrm{BZ}}\coloneqq\operatorname{span}_{\mathbb{Q}}\left\{ F_{\mathrm{BZ}}(\boldsymbol{k})\ \middle|\ \boldsymbol{k}\in\operatorname{Ind}_{\mathrm{BZ}}\right\} \subset\mathbb{Q}\langle\mathcal{B}\rangle^{0}.
\]
The displayed spanning family is linearly independent, since $b_{\boldsymbol{k}}$
is the unique term of $F_{\mathrm{BZ}}(\boldsymbol{k})$ having maximal
sum of subscripts. Therefore, the following $\mathbb{Q}$-linear maps
are well-defined involutions:
\[
\tau_{\mathrm{BZ}}\colon V_{\mathrm{BZ}}\longrightarrow V_{\mathrm{BZ}},\qquad\tau_{\mathrm{SZ}}\colon\mathbb{Q}\langle\mathcal{B}\rangle^{0}\longrightarrow\mathbb{Q}\langle\mathcal{B}\rangle^{0},
\]
defined by
\[
\tau_{\mathrm{BZ}}\left(F_{\mathrm{BZ}}(\boldsymbol{k})\right)\coloneqq F_{\mathrm{BZ}}(\boldsymbol{k}^{\dagger}),\qquad\tau_{\mathrm{SZ}}(b_{\boldsymbol{k}})\coloneqq b_{\boldsymbol{k}^{\vee}}.
\]
Define the spaces of duality relations by
\[
I^{\mathrm{BZ}}_{\mathrm{dual}}\coloneqq\operatorname{Im}\left(\operatorname{id}_{V_{\mathrm{BZ}}}-\tau_{\mathrm{BZ}}\right)
\]
and
\[
I^{\mathrm{SZ}}_{\mathrm{dual}}\coloneqq\operatorname{Im}\left(\operatorname{id}_{\mathbb{Q}\langle\mathcal{B}\rangle^{0}}-\tau_{\mathrm{SZ}}\right).
\]
The duality relations imply
\[
I^{\mathrm{BZ}}_{\mathrm{dual}}+I^{\mathrm{SZ}}_{\mathrm{dual}}\subset\ker Z^{\mathrm{SZ}}_{q}.
\]

\subsection{\label{subsec:conjectural_duality}A conjectural formula}

Following \cite{Yamamoto_qMPL_duality}, we introduce augmented indices,
writing their components in the order $(\mu,k)$.

An augmented positive integer is a pair
\[
\widetilde{k}=(\mu,k)\in\{0,1\}\times\mathbb{Z}_{\geq1}.
\]
An augmented index is a finite tuple
\[
\widetilde{\boldsymbol{k}}=((\mu_{1},k_{1}),\ldots,(\mu_{d},k_{d}))
\]
of augmented positive integers. It is called admissible if $d=0$
or $(\mu_{d},k_{d})\neq(1,1)$. We denote the set of admissible augmented
indices by $\operatorname{Ind}_{\mathrm{aug}}$.

For $\widetilde{\boldsymbol{k}}\in\operatorname{Ind}_{\mathrm{aug}}$,
put
\[
\omega(\widetilde{\boldsymbol{k}})\coloneqq y_{\mu_{1}}x^{k_{1}-1}\cdots y_{\mu_{d}}x^{k_{d}-1}\in\mathbb{Q}\langle x,y_{0},y_{1}\rangle.
\]
Let $\tau_{\mathrm{aug}}$ be the anti-automorphism of $\mathbb{Q}\langle x,y_{0},y_{1}\rangle$
defined by
\[
\tau_{\mathrm{aug}}(x)\coloneqq y_{1},\qquad\tau_{\mathrm{aug}}(y_{0})\coloneqq y_{0},\qquad\tau_{\mathrm{aug}}(y_{1})\coloneqq x.
\]
For $\widetilde{\boldsymbol{k}}\in\operatorname{Ind}_{\mathrm{aug}}$,
its dual $\widetilde{\boldsymbol{k}}^{\dagger}\in\operatorname{Ind}_{\mathrm{aug}}$
is defined by
\[
\tau_{\mathrm{aug}}\left(\omega(\widetilde{\boldsymbol{k}})\right)=\omega\left(\widetilde{\boldsymbol{k}}^{\dagger}\right).
\]

\begin{defn}
\label{def:theta}For
\[
\widetilde{\boldsymbol{k}}=((\mu_{1},k_{1}),\ldots,(\mu_{d},k_{d}))\in\operatorname{Ind}_{\mathrm{aug}},
\]
define its $\theta$-value by
\[
\theta(\widetilde{\boldsymbol{k}})\coloneqq\sum_{\substack{\varepsilon_{1},\dots,\varepsilon_{d}\in\{0,1\}\\
\varepsilon_{i}=1\ {\rm if}\ \mu_{i}=0
}
}(-1)^{\sum^{d}_{i=1}\mu_{i}\varepsilon_{i}}\mathbb{L}_{k_{1},\ldots,k_{d}}\left(q^{k_{1}-1+\varepsilon_{2}}z^{\varepsilon_{1}-\varepsilon_{2}},\ldots,q^{k_{d}-1+\varepsilon_{d+1}}z^{\varepsilon_{d}-\varepsilon_{d+1}}\right)\in\mathbb{Q}\mathcal{F}^{0},
\]
where $\varepsilon_{d+1}\coloneqq0$. 
\end{defn}

\begin{rem}
Let
\[
\widetilde{\boldsymbol{k}}=((\mu_{1},k_{1}),\ldots,(\mu_{d},k_{d}))\in\operatorname{Ind}_{\mathrm{aug}}
\]
be nonempty, and write
\[
\widetilde{\boldsymbol{k}}_{\mathrm{Y}}\coloneqq((k_{1},\mu_{1}),\ldots,(k_{d},\mu_{d})),\qquad\operatorname{wt}(\widetilde{\boldsymbol{k}})\coloneqq k_{1}+\cdots+k_{d},
\]
where $\widetilde{\boldsymbol{k}}_{\mathrm{Y}}$ is the corresponding
augmented index in Yamamoto's convention. In the notation of Definition
3.1 in \cite{Yamamoto_qMPL_duality}, \Cref{def:theta} gives
\[
\widetilde{\mathrm{Li}}^{(1)}_{q}\left(\widetilde{\boldsymbol{k}}_{\mathrm{Y}};z\right)=(1-q)^{\operatorname{wt}(\widetilde{\boldsymbol{k}})}E\left(\theta(\widetilde{\boldsymbol{k}})\right)(z).
\]
\end{rem}

Then, Yamamoto's $q$-duality \cite[Theorem 3.2]{Yamamoto_qMPL_duality}
can be reformulated as follows.
\begin{thm}[{\cite[Theorem 3.2]{Yamamoto_qMPL_duality}}]
\label{thm:Yamamoto_qdual_by_E}For $\widetilde{\boldsymbol{k}}\in\operatorname{Ind}_{\mathrm{aug}}$,
we have
\[
E\left(\theta(\widetilde{\boldsymbol{k}})\right)=E(\theta(\widetilde{\boldsymbol{k}}^{\dagger})).
\]
\end{thm}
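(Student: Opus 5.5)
This theorem is a reformulation of \cite[Theorem 3.2]{Yamamoto_qMPL_duality}, so the plan is a translation of Yamamoto's identity into the present notation rather than an independent proof. The tool is the preceding remark, which gives
\[
\widetilde{\mathrm{Li}}^{(1)}_{q}(\widetilde{\boldsymbol{k}}_{\mathrm{Y}};z)=(1-q)^{\operatorname{wt}(\widetilde{\boldsymbol{k}})}E(\theta(\widetilde{\boldsymbol{k}}))(z).
\]
Yamamoto's Theorem 3.2 asserts $\widetilde{\mathrm{Li}}^{(1)}_{q}(\widetilde{\boldsymbol{k}}_{\mathrm{Y}};z)=\widetilde{\mathrm{Li}}^{(1)}_{q}((\widetilde{\boldsymbol{k}}^{\dagger})_{\mathrm{Y}};z)$. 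The weight is preserved by duality: $\omega(\widetilde{\boldsymbol{k}})$ has $\operatorname{wt}(\widetilde{\boldsymbol{k}})$ letters in total, and $\tau_{\mathrm{aug}}$ is letter-to-letter, so $\operatorname{wt}(\widetilde{\boldsymbol{k}}^{\dagger})=\operatorname{wt}(\widetilde{\boldsymbol{k}})$. Hence the factors $(1-q)^{\operatorname{wt}}$ cancel, which gives the claim.

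The real work is checking the remark and the matching of conventions. First, I expand $E(\theta(\widetilde{\boldsymbol{k}}))$ using the series for $E$ written after the definition of the evaluation map. For each $\varepsilon$, the tail product $x_{i}\cdots x_{d}$ telescopes to $q^{\sum_{j\ge i}(k_{j}-1)+\varepsilon_{i+1}}\,z^{\varepsilon_{i}}$ up to the bookkeeping of $\varepsilon$-shifts. The summand is therefore
\[
\prod_{i} z^{\varepsilon_{i}(m_{i}-m_{i-1})}\,q^{(k_{i}-1)m_{i}}\,(\text{$q$-power from }\varepsilon)\,/\,(1-q^{m_{i}})^{k_{i}}.
\]
Summing over the $\varepsilon_{i}$ with signs $(-1)^{\mu_{i}\varepsilon_{i}}$ should factor position by position. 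Each position with $\mu_{i}=1$ contributes a difference of the form ``$1-z^{\cdots}q^{\cdots}$'', while each position with $\mu_{i}=0$ is frozen at $\varepsilon_{i}=1$. I will compare this with Yamamoto's Definition 3.1 of $\widetilde{\mathrm{Li}}^{(1)}_{q}$, which is built from $[m]_{q}=(1-q^{m})/(1-q)$; this comparison produces the factor $(1-q)^{\operatorname{wt}}$. In the same step I will check that the order convention $(\mu,k)$ versus $(k,\mu)$ and the admissibility condition $(\mu_{d},k_{d})\neq(1,1)$ correspond, the latter being what guarantees convergence near $z=0$.

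Second, I must confirm that Yamamoto's duality map on augmented indices, defined via his word encoding, agrees with the anti-automorphism $\tau_{\mathrm{aug}}$ ($x\leftrightarrow y_{1}$, $y_{0}$ fixed) applied to $\omega$. I will check a few small cases, for example $((1,2))$ and $((0,1),(1,2))$, and then argue that the two encodings differ only by the coordinate swap $\widetilde{\boldsymbol{k}}\mapsto\widetilde{\boldsymbol{k}}_{\mathrm{Y}}$.

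Finally, both sides of the theorem are meromorphic in $z$, and Yamamoto's identity holds on a domain of convergence (small $|z|$). The identity therefore extends to all of $\mathcal{M}(\mathbb{C})$ by uniqueness of meromorphic continuation.

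I expect the main obstacle to be the index bookkeeping in the first step: verifying that the $\varepsilon$-dependent powers of $q$ and $z$ in $\theta$ reproduce exactly the numerators of $\widetilde{\mathrm{Li}}^{(1)}_{q}$, including the boundary convention $\varepsilon_{d+1}=0$ and the sign $(-1)^{\mu_{i}\varepsilon_{i}}$. I would handle this by proving the single-position factorization identity first and then taking the product over positions.
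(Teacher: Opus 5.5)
Your plan matches the paper: the paper gives no separate argument and simply transports Yamamoto's Theorem 3.2 through the remark $\widetilde{\mathrm{Li}}^{(1)}_{q}(\widetilde{\boldsymbol{k}}_{\mathrm{Y}};z)=(1-q)^{\operatorname{wt}(\widetilde{\boldsymbol{k}})}E(\theta(\widetilde{\boldsymbol{k}}))(z)$ preceding the theorem, and the $(1-q)$-powers cancel because $\tau_{\mathrm{aug}}$ preserves word length, exactly as you say. One small correction to your bookkeeping: the $q$-exponent of the tail product is $\sum_{j\ge i}(k_j-1)+\varepsilon_{i+1}+\cdots+\varepsilon_d$, not $\sum_{j\ge i}(k_j-1)+\varepsilon_{i+1}$, and the per-position factor is $(z^{m_i-m_{i-1}}q^{m_{i-1}})^{\varepsilon_i}$ with $m_0=0$, which gives $1-z^{m_i-m_{i-1}}q^{m_{i-1}}$ for $\mu_i=1$ and $z^{m_i-m_{i-1}}q^{m_{i-1}}$ for $\mu_i=0$.
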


Combining \Cref{prop:evaluation_phi} and \Cref{thm:Yamamoto_qdual_by_E},
we have
\[
\varphi^{\pi}(\theta(\widetilde{\boldsymbol{k}}))-\varphi^{\pi}(\theta(\widetilde{\boldsymbol{k}}^{\dagger}))\in\ker((Z^{\mathrm{SZ}}_{q},L^{\mathrm{SZ}}_{q})),
\]
where $(Z^{\mathrm{SZ}}_{q},L^{\mathrm{SZ}}_{q})\colon\mathbb{Q}\langle\mathcal{B}\rangle^{0}\otimes\mathbb{Q}\langle\mathcal{B}\rangle\to\mathcal{M}(\mathbb{C})$
is the $\mathbb{Q}$-linear map defined by $(Z^{\mathrm{SZ}}_{q},L^{\mathrm{SZ}}_{q})(u\otimes v)\coloneqq Z^{\mathrm{SZ}}_{q}(u)L^{\mathrm{SZ}}_{q}(v)$. 

We now state the main conjecture of this section, which is suggested
by numerical computations.
\begin{conjecture}
\label{conj:phi_dual}For $\widetilde{\boldsymbol{k}}\in\operatorname{Ind}_{\mathrm{aug}}$,
we have
\[
\varphi^{\pi}(\theta(\widetilde{\boldsymbol{k}}))\in V_{\mathrm{BZ}}\otimes\mathbb{Q}\langle\mathcal{B}\rangle.
\]
Furthermore, for $\widetilde{\boldsymbol{k}}\in\operatorname{Ind}_{\mathrm{aug}}$,
we have
\[
\varphi^{\pi}(\theta(\widetilde{\boldsymbol{k}}))=(\tau_{\mathrm{BZ}}\otimes\mathrm{id})(\varphi^{\pi}(\theta(\widetilde{\boldsymbol{k}}^{\dagger}))).
\]
\end{conjecture}

In the next subsection, we prove that the duality relations are contained
in the confluence relations under the assumption that \Cref{conj:phi_dual}
holds whenever $\mu_{i}=1$ for all $i$. These cases can be stated
as follows.
\begin{conjecture}
\label{conj:phi_dual_special}For $\boldsymbol{k}=(k_{1},\ldots,k_{d})\in\operatorname{Ind}_{\mathrm{BZ}}$,
define
\[
\Theta(\boldsymbol{k})\coloneqq\sum_{\varepsilon_{1},\dots,\varepsilon_{d}\in\{0,1\}}(-1)^{\sum^{d}_{i=1}\varepsilon_{i}}\mathbb{L}_{k_{1},\ldots,k_{d}}\left(q^{k_{1}-1+\varepsilon_{2}}z^{\varepsilon_{1}-\varepsilon_{2}},\ldots,q^{k_{d}-1+\varepsilon_{d+1}}z^{\varepsilon_{d}-\varepsilon_{d+1}}\right)\in\mathbb{Q}\mathcal{F}^{0},
\]
where we put $\varepsilon_{d+1}\coloneqq0$. Then,
\[
\varphi^{\pi}(\Theta(\boldsymbol{k}))\in V_{\mathrm{BZ}}\otimes\mathbb{Q}\langle\mathcal{B}\rangle
\]
and
\[
\varphi^{\pi}(\Theta(\boldsymbol{k}))=(\tau_{\mathrm{BZ}}\otimes\mathrm{id})(\varphi^{\pi}(\Theta(\boldsymbol{k}^{\dagger}))).
\]
\end{conjecture}

By the recursive definition of $\varphi$, the contribution of $\mathrm{Const}(\Theta(\boldsymbol{k}))$
is $F_{\mathrm{BZ}}(\boldsymbol{k})\otimes1$, whereas all other contributions
have first tensor factors of smaller weight. Hence, by the triangularity
of the family $F_{\mathrm{BZ}}(\boldsymbol{a})$, the first assertion
is equivalent to

\begin{equation}
\varphi^{\pi}(\Theta(\boldsymbol{k}))=F_{\mathrm{BZ}}(\boldsymbol{k})\otimes1+\sum_{\substack{\boldsymbol{a}\in\operatorname{Ind}_{\mathrm{BZ}}\\
|\boldsymbol{a}|<|\boldsymbol{k}|
}
}F_{\mathrm{BZ}}(\boldsymbol{a})\otimes v_{\boldsymbol{k},\boldsymbol{a}}\label{eq:phi_theta_expansion}
\end{equation}
with $v_{\boldsymbol{k},\boldsymbol{a}}\in\mathbb{Q}\langle\mathcal{B}\rangle$.
By linear independence of the family $F_{\mathrm{BZ}}(\boldsymbol{a})$,
the elements $v_{\boldsymbol{k},\boldsymbol{a}}$ are uniquely determined,
and the second assertion is equivalent to
\begin{equation}
v_{\boldsymbol{k}^{\dagger},\boldsymbol{a}^{\dagger}}=v_{\boldsymbol{k},\boldsymbol{a}}.\label{eq:v_dual}
\end{equation}

\subsection{The confluence relations and the dualities\label{subsec:conf_and_duality}}

In this subsection, we show that \Cref{conj:phi_dual_special} implies
$I^{\mathrm{BZ}}_{\mathrm{dual}}+I^{\mathrm{SZ}}_{\mathrm{dual}}\subset I_{\mathrm{CF}}$.
Let us define a $\mathbb{Q}$-linear map
\[
T\colon\mathbb{Q}\hat{\mathcal{F}}\to\mathbb{Q}\hat{\mathcal{F}}
\]
by $T(u(z))=u(zq)$. Then we have
\[
\Theta(\boldsymbol{k}),T(\Theta(\boldsymbol{k}))\in\mathbb{Q}\mathcal{G}.
\]
We also put
\[
\mathbb{Q}\langle\mathcal{B}\rangle^{!}\coloneqq\mathbb{Q}+\bigoplus^{\infty}_{j=1}\mathbb{Q}\langle\mathcal{B}\rangle(b_{j}+b_{j-1})
\]
and define a $\mathbb{Q}$-linear map 
\[
T_{\mathcal{B}}\colon\mathbb{Q}\langle\mathcal{B}\rangle^{!}\to\mathbb{Q}\langle\mathcal{B}\rangle^{0}
\]
by $T_{\mathcal{B}}(1)=1$ and
\[
T_{\mathcal{B}}(u(b_{j}+b_{j-1}))=ub_{j}\qquad(u\in\mathbb{Q}\langle\mathcal{B}\rangle,\,j\geq1).
\]
The key to the proof is to consider the confluence relations
\begin{equation}
m_{\odot}\circ(\mathrm{id}\otimes\operatorname{reg}_{*})\circ\varphi^{\pi}(\Theta(\boldsymbol{k}))-\pi(\left.\Theta(\boldsymbol{k})\right|_{z=1})\in I^{\mathrm{alg}}_{\mathrm{CF}}\label{eq:conf_theta}
\end{equation}
and
\begin{equation}
m_{\odot}\circ(\mathrm{id}\otimes\operatorname{reg}_{*})\circ\varphi^{\pi}(T(\Theta(\boldsymbol{k})))-\pi(\left.T(\Theta(\boldsymbol{k}))\right|_{z=1})\in I^{\mathrm{alg}}_{\mathrm{CF}}.\label{eq:conf_T_theta}
\end{equation}
The following lemmas evaluate the terms of \eqref{eq:conf_theta}
and \eqref{eq:conf_T_theta}.
\begin{lem}
Let $\boldsymbol{k}\in\operatorname{Ind}_{\mathrm{BZ}}$. Assuming
\eqref{eq:phi_theta_expansion}, we have
\begin{equation}
m_{\odot}\circ(\mathrm{id}\otimes\operatorname{reg}_{*})\circ\varphi^{\pi}(\Theta(\boldsymbol{k}))=F_{\mathrm{BZ}}(\boldsymbol{k})\odot1_{\mathbb{Q}\langle\mathcal{B}\rangle^{0}}+\sum_{\substack{\boldsymbol{a}\in\operatorname{Ind}_{\mathrm{BZ}}\\
|\boldsymbol{a}|<|\boldsymbol{k}|
}
}F_{\mathrm{BZ}}(\boldsymbol{a})\odot\operatorname{reg}_{*}(v_{\boldsymbol{k},\boldsymbol{a}}).\label{eq:conf_theta_1}
\end{equation}
Furthermore, we have $v_{\boldsymbol{k},\boldsymbol{a}}\in\mathbb{Q}\langle\mathcal{B}\rangle^{!}$
and
\begin{equation}
m_{\odot}\circ(\mathrm{id}\otimes\operatorname{reg}_{*})\circ\varphi^{\pi}(T(\Theta(\boldsymbol{k})))=F_{\mathrm{BZ}}(\boldsymbol{k})\odot1_{\mathbb{Q}\langle\mathcal{B}\rangle^{0}}+\sum_{\substack{\boldsymbol{a}\in\operatorname{Ind}_{\mathrm{BZ}}\\
|\boldsymbol{a}|<|\boldsymbol{k}|
}
}F_{\mathrm{BZ}}(\boldsymbol{a})\odot T_{\mathcal{B}}(v_{\boldsymbol{k},\boldsymbol{a}}).\label{eq:conf_Ttheta_1}
\end{equation}
\end{lem}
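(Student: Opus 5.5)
The first identity \eqref{eq:conf_theta_1} needs no new input. We apply the $\mathbb{Q}$-linear map $m_{\odot}\circ(\mathrm{id}\otimes\operatorname{reg}_{*})$ to \eqref{eq:phi_theta_expansion} term by term. Since $\operatorname{reg}_{*}(1)=1_{\mathbb{Q}\langle\mathcal{B}\rangle^{0}}$, this gives \eqref{eq:conf_theta_1} directly. The real content is the second part. My plan is to relate $\varphi(T(u))$ to $\varphi(u)$ by a shift in the second tensor factor, and then to translate that shift through $\pi_{z}$ into the map $T_{\mathcal{B}}$.

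\emph{Step 1: $T$ intertwines $\partial$.} Let $\sigma\colon\Omega\to\Omega$ be defined by $\sigma(e_{0})=e_{0}$ and $\sigma(e_{b})=e_{b/q}$ for $b\in q^{\mathbb{Z}}$. I claim that for $u\in\mathcal{F}$,
\[
\partial(T(u))=(T\otimes\sigma)(\partial u).
\]
Passing from $u(z)$ to $u(zq)$ raises the level of every $z$-dependent position by exactly one. Hence the order of replacements defining $\operatorname{Std}$ is unchanged, and $\operatorname{Std}(T(u))=T(\operatorname{Std}(u))$ termwise. It then remains to check $D(Tv,Tv')=(T\otimes\sigma)D(v,v')$ for adjacent $v,v'$ by inspecting \Cref{def:D}. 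For instance, $x_{j}=q^{m}z$ becomes $q^{m+1}z$, and correspondingly $e_{q^{-m}}$ becomes $e_{q^{-m-1}}=\sigma(e_{q^{-m}})$. The case $x_{j}=q^{-m}z^{-1}$ is checked the same way.

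\emph{Step 2: $T$ intertwines $\varphi$.} On $\hat{\mathcal{C}}_{z}$ one has $S\circ(T\otimes\sigma)=T\circ S$. For example,
\[
T\bigl(-\mathbb{L}_{\boldsymbol{k},1}(\dots,q^{n_{d}-r},zq^{r})\bigr)=-\mathbb{L}_{\boldsymbol{k},1}(\dots,q^{(n_{d}+1)-(r+1)},zq^{r+1}),
\]
which is exactly $S\bigl(T(\,\cdot\,)\otimes e_{q^{-r-1}}\bigr)$. Moreover, $T(u)$ is constant if and only if $u$ is, so $\mathrm{Const}\circ T=(\mathrm{id}\otimes T)\circ\mathrm{Const}$. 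Induction on the weight through the recursive definition of $\varphi$ then gives
\[
\varphi(T(u))=(\mathrm{id}\otimes T)\varphi(u).
\]

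\emph{Step 3: translate through $\pi_{z}$.} We have $\Theta(\boldsymbol{k}),T(\Theta(\boldsymbol{k}))\in\mathbb{Q}\mathcal{G}\subset\mathbb{Q}\mathcal{F}^{0}$. By \eqref{eq:phi_Fn_image}, both $\varphi(\Theta(\boldsymbol{k}))$ and $(\mathrm{id}\otimes T)\varphi(\Theta(\boldsymbol{k}))$ therefore have second factors in $\mathbb{Q}\mathcal{C}^{0}_{z}$. Consequently every second-factor term $\mathbb{L}_{k_{1},\dots,k_{d}}(q^{n_{1}},\dots,zq^{n_{d}})$ of $\varphi(\Theta(\boldsymbol{k}))$ satisfies $n_{d}\le k_{d}-1$. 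For such a term, the Pascal identity gives
\[
\sum_{s}\binom{k_{d}-n_{d}}{s}b_{k_{d}-s}=\sum_{s}\binom{k_{d}-n_{d}-1}{s}(b_{k_{d}-s}+b_{k_{d}-s-1}).
\]
This shows that $\pi_{z}$ of the term lies in $\mathbb{Q}\langle\mathcal{B}\rangle^{!}$. It also shows that $T_{\mathcal{B}}$ of it equals $\pi_{z}$ of the shifted term, namely $F$ with $n_{d}+1$. The empty word is fixed by both $T$ and $T_{\mathcal{B}}$.

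\emph{Step 4: conclude.} The first factors are untouched by $\mathrm{id}\otimes T$, so we can read off \eqref{eq:phi_theta_expansion} for $T(\Theta(\boldsymbol{k}))$. It has the same first factors $F_{\mathrm{BZ}}(\boldsymbol{a})$, with second factors $T_{\mathcal{B}}(v_{\boldsymbol{k},\boldsymbol{a}})$, and leading term $F_{\mathrm{BZ}}(\boldsymbol{k})\otimes1$. This also shows $v_{\boldsymbol{k},\boldsymbol{a}}\in\mathbb{Q}\langle\mathcal{B}\rangle^{!}$, by uniqueness from linear independence of the $F_{\mathrm{BZ}}(\boldsymbol{a})$. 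Since $T_{\mathcal{B}}$ takes values in $\mathbb{Q}\langle\mathcal{B}\rangle^{0}$, $\operatorname{reg}_{*}$ acts trivially on these second factors, and \eqref{eq:conf_Ttheta_1} follows.

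The main obstacle is Step 1. We must verify that the standard path of $T(u)$ is literally the $T$-image of that of $u$, including the same-level right-to-left tie-breaking. We must also confirm that the membership $T(\Theta(\boldsymbol{k}))\in\mathcal{F}^{0}$ actually holds: it requires each shifted $q$-exponent to stay within $0\le s\le k_{i}$. I would verify this directly from the explicit arguments $q^{k_{i}-1+\varepsilon_{i+1}}z^{\varepsilon_{i}-\varepsilon_{i+1}}$.
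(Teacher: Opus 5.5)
Your proposal is correct and follows the paper's proof essentially step for step. The same intertwining identities $(T\otimes T_{\Omega})\circ\partial=\partial\circ T$, $T\circ S=S\circ(T\otimes T_{\Omega})$ (your $\sigma$ is the paper's $T_{\Omega}$) and $\mathrm{Const}\circ T=\mathrm{Const}$ give $\varphi\circ T=(\mathrm{id}\otimes T)\circ\varphi$, which is then transported via $\pi_{z}\circ T=T_{\mathcal{B}}\circ\pi_{z}$. The only small variation is how you get the bound $n_{d}\le k_{d}-1$ on the second factors of $\varphi(\Theta(\boldsymbol{k}))$: you use $T(\Theta(\boldsymbol{k}))\in\mathbb{Q}\mathcal{F}^{0}$ together with the intertwining (implicitly relying on $T$ being injective on basis elements), whereas the paper reads it off directly from $\Theta(\boldsymbol{k})\in\mathbb{Q}(\mathcal{F}^{-1}\cap\mathcal{F}^{0})$ and \eqref{eq:phi_Fn_image}.
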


\begin{proof}
Since
\[
\Theta(\boldsymbol{k})\in\mathbb{Q}(\mathcal{F}^{-1}\cap\mathcal{F}^{0}),\qquad T(\Theta(\boldsymbol{k}))\in\mathbb{Q}(\mathcal{F}^{0}\cap\mathcal{F}^{1})
\]
we have
\[
\varphi(\Theta(\boldsymbol{k}))\in\mathbb{Q}\mathcal{C}\otimes\mathbb{Q}(\mathcal{C}^{-1}_{z}\cap\mathcal{C}^{0}_{z}),\qquad\varphi(T(\Theta(\boldsymbol{k})))\in\mathbb{Q}\mathcal{C}\otimes\mathbb{Q}(\mathcal{C}^{0}_{z}\cap\mathcal{C}^{1}_{z}).
\]
Note that
\[
\pi_{z}(\mathbb{Q}(\mathcal{C}^{-1}_{z}\cap\mathcal{C}^{0}_{z}))=\mathbb{Q}\langle\mathcal{B}\rangle^{!},\qquad\pi_{z}(\mathbb{Q}(\mathcal{C}^{0}_{z}\cap\mathcal{C}^{1}_{z}))=\mathbb{Q}\langle\mathcal{B}\rangle^{0}.
\]
Thus,
\[
\varphi^{\pi}(\Theta(\boldsymbol{k}))\in\mathbb{Q}\langle\mathcal{B}\rangle^{0}\otimes\mathbb{Q}\langle\mathcal{B}\rangle^{!},\qquad\varphi^{\pi}(T(\Theta(\boldsymbol{k})))\in\mathbb{Q}\langle\mathcal{B}\rangle^{0}\otimes\mathbb{Q}\langle\mathcal{B}\rangle^{0}.
\]
The claim $v_{\boldsymbol{k},\boldsymbol{a}}\in\mathbb{Q}\langle\mathcal{B}\rangle^{!}$
follows from this.

Define a $\mathbb{Q}$-linear map $T_{\Omega}\colon\Omega\to\Omega$
by $T_{\Omega}(e_{b})=e_{bq^{-1}}$. Then,
\[
\mathrm{Const}=\mathrm{Const}\circ T,\qquad T\circ S=S\circ(T\otimes T_{\Omega}),\qquad(T\otimes T_{\Omega})\circ\partial=\partial\circ T,
\]
and thus,
\[
\varphi\circ T=(\mathrm{id}_{\mathbb{Q}\mathcal{C}}\otimes T)\circ\varphi
\]
as a map from $\mathbb{Q}(\mathcal{F}^{-1}\cap\mathcal{F}^{0})$ to
$\mathbb{Q}\mathcal{C}\otimes\mathbb{Q}(\mathcal{C}^{0}_{z}\cap\mathcal{C}^{1}_{z})$.
Furthermore, since
\[
\pi_{z}\circ T=T_{\mathcal{B}}\circ\pi_{z}
\]
as a map from $\mathbb{Q}(\mathcal{C}^{-1}_{z}\cap\mathcal{C}^{0}_{z})$
to $\mathbb{Q}\langle\mathcal{B}\rangle^{0}$, we have
\[
\varphi^{\pi}\circ T=(\mathrm{id}_{\mathbb{Q}\langle\mathcal{B}\rangle^{0}}\otimes T_{\mathcal{B}})\circ\varphi^{\pi}
\]
as a map from $\mathbb{Q}(\mathcal{F}^{-1}\cap\mathcal{F}^{0})$ to
$\mathbb{Q}\langle\mathcal{B}\rangle^{0}\otimes\mathbb{Q}\langle\mathcal{B}\rangle^{0}$.
Applying this to \eqref{eq:phi_theta_expansion}, we get
\begin{equation}
\varphi^{\pi}(T(\Theta(\boldsymbol{k})))=F_{\mathrm{BZ}}(\boldsymbol{k})\otimes1+\sum_{\substack{\boldsymbol{a}\in\operatorname{Ind}_{\mathrm{BZ}}\\
|\boldsymbol{a}|<|\boldsymbol{k}|
}
}F_{\mathrm{BZ}}(\boldsymbol{a})\otimes T_{\mathcal{B}}(v_{\boldsymbol{k},\boldsymbol{a}}).\label{eq:phi_Ttheta_expansion}
\end{equation}
Now \eqref{eq:conf_theta_1} and \eqref{eq:conf_Ttheta_1} follow
from \eqref{eq:phi_theta_expansion} and \eqref{eq:phi_Ttheta_expansion},
respectively.
\end{proof}

\begin{lem}
We have
\begin{equation}
\pi(\left.\Theta(\boldsymbol{k})\right|_{z=1})=0\qquad(\boldsymbol{k}\in\operatorname{Ind}_{\mathrm{BZ}}\setminus\{\emptyset\})\label{eq:conf_theta_2}
\end{equation}
and
\begin{equation}
\pi(\left.T(\Theta(\boldsymbol{k}))\right|_{z=1})=b_{\rho(\boldsymbol{k})}\qquad(\boldsymbol{k}\in\operatorname{Ind}_{\mathrm{BZ}}),\label{eq:conf_Ttheta_2}
\end{equation}
where $\rho\colon\operatorname{Ind}_{\mathrm{BZ}}\to\operatorname{Ind}_{\mathrm{SZ}}$
is the map defined by 
\[
\rho((k_{1},\dots,k_{d}))=(k_{1}-1,\dots,k_{d}-1).
\]
\end{lem}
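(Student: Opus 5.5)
Both identities come from substituting $z=1$ in the definition of $\Theta(\boldsymbol{k})$ (resp.\ $T(\Theta(\boldsymbol{k}))$) and organizing the $2^{d}$ terms by the choice of $\varepsilon_{1}$.

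\emph{First identity.} At $z=1$ the $i$-th argument of the term indexed by $\varepsilon=(\varepsilon_{1},\dots,\varepsilon_{d})$ is $q^{k_{i}-1+\varepsilon_{i+1}}$. It does not involve $\varepsilon_{1}$ at all, so the only dependence on $\varepsilon_{1}$ is the sign $(-1)^{\varepsilon_{1}}$. Hence the terms with $\varepsilon_{1}=0$ and $\varepsilon_{1}=1$ (the others fixed) give the same formal symbol $\mathbb{L}_{\boldsymbol{k}}(q^{k_{1}-1+\varepsilon_{2}},\dots,q^{k_{d}-1})$ with opposite signs. They cancel already in $\mathbb{Q}\mathcal{C}$, so $\left.\Theta(\boldsymbol{k})\right|_{z=1}=0$ for $d\geq1$ and therefore $\pi(\left.\Theta(\boldsymbol{k})\right|_{z=1})=0$. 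Before this I would check that each term really lies in $\mathcal{G}$, i.e.\ $x_{d}\neq z$. Indeed $x_{d}=q^{k_{d}-1}z^{\varepsilon_{d}}$, and $k_{d}\geq2$ gives $k_{d}-1\geq1$, so $x_{d}\neq z$; hence $z=1$ is a legitimate substitution.

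\emph{Second identity.} Applying $T$ multiplies each argument by $q^{\varepsilon_{i}-\varepsilon_{i+1}}$. Setting $z=1$ then gives the $i$-th argument $q^{k_{i}-1+\varepsilon_{i}}$, which depends on $\varepsilon_{i}$ alone. The sum therefore factorizes position by position, and by the definition of $F$ the image under $\pi$ is
\[
\sum_{\varepsilon}(-1)^{\sum\varepsilon_{i}}F(\boldsymbol{k};k_{1}-1+\varepsilon_{1},\dots,k_{d}-1+\varepsilon_{d}).
\]
Since $F(k;n)$ for a single position is $\sum_{s}\binom{k-n}{s}b_{k-s}$, we have $F(k;k-1)=b_{k}+b_{k-1}$ and $F(k;k)=b_{k}$. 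The product $F$ is multiplicative in positions (as a concatenation of words). So the total is
\[
\prod_{i}\bigl((b_{k_{i}}+b_{k_{i}-1})-b_{k_{i}}\bigr)=b_{k_{1}-1}\cdots b_{k_{d}-1}=b_{\rho(\boldsymbol{k})}.
\]
The empty case gives $1=b_{\emptyset}$ trivially.

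The only delicate point is bookkeeping of the shifted exponents under $T$, namely checking that $x_{i}(zq)=q^{k_{i}-1+\varepsilon_{i+1}}(zq)^{\varepsilon_{i}-\varepsilon_{i+1}}$ becomes $q^{k_{i}-1+\varepsilon_{i}}$ at $z=1$. One should also confirm that all arguments satisfy $0\leq n_{i}\leq k_{i}$, so that $F$ and $\pi$ apply. Both checks are immediate, since $\varepsilon_{i}\in\{0,1\}$ and $k_{i}\geq1$.
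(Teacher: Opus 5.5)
Your proposal is correct and follows the paper's proof. The first identity comes from the pairwise cancellation of the $\varepsilon_{1}=0$ and $\varepsilon_{1}=1$ terms after setting $z=1$. For the second, $\pi$ turns the signed sum into $\sum_{\varepsilon}(-1)^{\sum\varepsilon_{i}}F(\boldsymbol{k};k_{1}-1+\varepsilon_{1},\dots,k_{d}-1+\varepsilon_{d})$, which factorizes as $\prod_{i}\bigl((b_{k_{i}}+b_{k_{i}-1})-b_{k_{i}}\bigr)=b_{\rho(\boldsymbol{k})}$. Your extra checks (membership in $\mathcal{G}$ via $k_{d}\geq2$, and $0\leq n_{i}\leq k_{i}$) are details the paper leaves implicit.
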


\begin{proof}
For \eqref{eq:conf_theta_2}, the terms with $\varepsilon_{1}=0$
and $1$ cancel pairwise after setting $z=1$. For \eqref{eq:conf_Ttheta_2},
the definitions of $\pi$ and $F$ give
\begin{align*}
\pi(\left.T(\Theta(\boldsymbol{k}))\right|_{z=1}) & =\sum_{\varepsilon_{1},\dots,\varepsilon_{d}\in\{0,1\}}(-1)^{\sum^{d}_{i=1}\varepsilon_{i}}F(k_{1},\dots,k_{d};k_{1}-1+\varepsilon_{1},\dots,k_{d}-1+\varepsilon_{d})\\
 & =b_{\rho(\boldsymbol{k})}.\qedhere
\end{align*}
\end{proof}

\begin{thm}
\label{thm:BZ_dual_conf}If \Cref{conj:phi_dual_special} is true,
then
\[
I^{\mathrm{BZ}}_{\mathrm{dual}}\subset I_{\mathrm{CF}}.
\]
\end{thm}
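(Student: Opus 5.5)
The plan is to combine the confluence relation \eqref{eq:conf_theta} for $\boldsymbol{k}$ with the same relation for $\boldsymbol{k}^{\dagger}$, and then use \eqref{eq:v_dual} to cancel everything except $F_{\mathrm{BZ}}(\boldsymbol{k})-F_{\mathrm{BZ}}(\boldsymbol{k}^{\dagger})$ modulo lower-weight duality relations. These lower-weight terms will be absorbed by induction on the weight $|\boldsymbol{k}|$. Since $I^{\mathrm{BZ}}_{\mathrm{dual}}$ is spanned by the elements $F_{\mathrm{BZ}}(\boldsymbol{k})-F_{\mathrm{BZ}}(\boldsymbol{k}^{\dagger})$ with $\boldsymbol{k}\in\operatorname{Ind}_{\mathrm{BZ}}$, it suffices to show that each of them lies in $I_{\mathrm{CF}}=I^{\mathrm{alg}}_{\mathrm{CF}}\cap\mathbb{Q}\langle\mathcal{B}\rangle^{0}$. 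Each such element already lies in $\mathbb{Q}\langle\mathcal{B}\rangle^{0}$, so it is enough to show that it lies in $I^{\mathrm{alg}}_{\mathrm{CF}}$.

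I first record that the BZ dual preserves weight. If $\boldsymbol{k}=(\{1\}^{d_{1}-1},c_{1}+1,\ldots)$, then $|\boldsymbol{k}|=\sum_{i}(c_{i}+d_{i})=|\boldsymbol{k}^{\dagger}|$. Hence $\boldsymbol{a}\mapsto\boldsymbol{a}^{\dagger}$ is an involutive bijection of $\{\boldsymbol{a}\in\operatorname{Ind}_{\mathrm{BZ}}\mid|\boldsymbol{a}|<|\boldsymbol{k}|\}$ onto itself. Also $\emptyset^{\dagger}=\emptyset$, which settles the case $\boldsymbol{k}=\emptyset$.

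Now let $\boldsymbol{k}\neq\emptyset$, so $\boldsymbol{k}^{\dagger}\neq\emptyset$. Assume, as the induction hypothesis, that $F_{\mathrm{BZ}}(\boldsymbol{a})-F_{\mathrm{BZ}}(\boldsymbol{a}^{\dagger})\in I^{\mathrm{alg}}_{\mathrm{CF}}$ whenever $|\boldsymbol{a}|<|\boldsymbol{k}|$. By \eqref{eq:conf_theta}, \eqref{eq:conf_theta_1} and \eqref{eq:conf_theta_2}, both of the following lie in $I^{\mathrm{alg}}_{\mathrm{CF}}$:
\[
F_{\mathrm{BZ}}(\boldsymbol{k})\odot1_{\mathbb{Q}\langle\mathcal{B}\rangle^{0}}+\sum_{|\boldsymbol{a}|<|\boldsymbol{k}|}F_{\mathrm{BZ}}(\boldsymbol{a})\odot\operatorname{reg}_{*}(v_{\boldsymbol{k},\boldsymbol{a}})
\]
and the analogous element for $\boldsymbol{k}^{\dagger}$. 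In the second element I reindex the sum by $\boldsymbol{a}\mapsto\boldsymbol{a}^{\dagger}$ and use \eqref{eq:v_dual}, namely $v_{\boldsymbol{k}^{\dagger},\boldsymbol{a}^{\dagger}}=v_{\boldsymbol{k},\boldsymbol{a}}$. This rewrites it as
\[
F_{\mathrm{BZ}}(\boldsymbol{k}^{\dagger})\odot1_{\mathbb{Q}\langle\mathcal{B}\rangle^{0}}+\sum_{|\boldsymbol{a}|<|\boldsymbol{k}|}F_{\mathrm{BZ}}(\boldsymbol{a}^{\dagger})\odot\operatorname{reg}_{*}(v_{\boldsymbol{k},\boldsymbol{a}}).
\]
Subtracting the two elements shows that
\[
(F_{\mathrm{BZ}}(\boldsymbol{k})-F_{\mathrm{BZ}}(\boldsymbol{k}^{\dagger}))\odot1_{\mathbb{Q}\langle\mathcal{B}\rangle^{0}}+\sum_{|\boldsymbol{a}|<|\boldsymbol{k}|}(F_{\mathrm{BZ}}(\boldsymbol{a})-F_{\mathrm{BZ}}(\boldsymbol{a}^{\dagger}))\odot\operatorname{reg}_{*}(v_{\boldsymbol{k},\boldsymbol{a}})\in I^{\mathrm{alg}}_{\mathrm{CF}}.
\]

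Each summand in the sum lies in $I^{\mathrm{alg}}_{\mathrm{CF}}$, because $I^{\mathrm{alg}}_{\mathrm{CF}}$ is an ideal of $\mathcal{S}(\mathbb{Q}\langle\mathcal{B}\rangle^{0})$ and the first factor lies in it by the induction hypothesis. The term with $\boldsymbol{a}=\emptyset$ vanishes identically since $\emptyset^{\dagger}=\emptyset$. Hence $(F_{\mathrm{BZ}}(\boldsymbol{k})-F_{\mathrm{BZ}}(\boldsymbol{k}^{\dagger}))\odot1_{\mathbb{Q}\langle\mathcal{B}\rangle^{0}}\in I^{\mathrm{alg}}_{\mathrm{CF}}$.

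Finally, the generator $1_{\mathbb{Q}\langle\mathcal{B}\rangle^{0}}-1_{\mathcal{S}}$ lies in $I^{\mathrm{alg}}_{\mathrm{CF}}$. Multiplying it by $F_{\mathrm{BZ}}(\boldsymbol{k})-F_{\mathrm{BZ}}(\boldsymbol{k}^{\dagger})$ and subtracting allows me to replace $\odot1_{\mathbb{Q}\langle\mathcal{B}\rangle^{0}}$ by $\odot1_{\mathcal{S}}$. This gives $F_{\mathrm{BZ}}(\boldsymbol{k})-F_{\mathrm{BZ}}(\boldsymbol{k}^{\dagger})\in I^{\mathrm{alg}}_{\mathrm{CF}}\cap\mathbb{Q}\langle\mathcal{B}\rangle^{0}=I_{\mathrm{CF}}$, which completes the induction.

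I do not expect a genuine obstacle. The points to check carefully are that the reindexing is legitimate, which uses weight-preservation and involutivity of $\dagger$, and that \eqref{eq:phi_theta_expansion} and \eqref{eq:v_dual} (consequences of \Cref{conj:phi_dual_special}) are applied for both $\boldsymbol{k}$ and $\boldsymbol{k}^{\dagger}$. The relation \eqref{eq:conf_T_theta} is not needed here; it would enter only in treating $I^{\mathrm{SZ}}_{\mathrm{dual}}$.
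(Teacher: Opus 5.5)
Your proposal is correct and follows essentially the same route as the paper's proof. Both argue by induction on $|\boldsymbol{k}|$: subtract the confluence relations for $\Theta(\boldsymbol{k})$ and $\Theta(\boldsymbol{k}^{\dagger})$, reindex with \eqref{eq:v_dual}, and absorb the lower-weight terms by the induction hypothesis. You also make explicit two points the paper leaves implicit: that $\dagger$ preserves weight, and that the generator $1_{\mathbb{Q}\langle\mathcal{B}\rangle^{0}}-1_{\mathcal{S}}$ is used to replace $\odot1_{\mathbb{Q}\langle\mathcal{B}\rangle^{0}}$ by $1_{\mathcal{S}}$.
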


\begin{proof}
We prove
\[
F_{\mathrm{BZ}}(\boldsymbol{k})-F_{\mathrm{BZ}}(\boldsymbol{k}^{\dagger})\in I_{\mathrm{CF}}
\]
for $\boldsymbol{k}\in\operatorname{Ind}_{\mathrm{BZ}}$ by induction
on $|\boldsymbol{k}|\coloneqq k_{1}+\cdots+k_{d}$. The case $\boldsymbol{k}=\emptyset$
is trivial. Hence, assume that $\boldsymbol{k}\neq\emptyset$. By
\eqref{eq:conf_theta}, \eqref{eq:conf_theta_1}, and \eqref{eq:conf_theta_2},
we have
\[
F_{\mathrm{BZ}}(\boldsymbol{k})\equiv-\sum_{\substack{\boldsymbol{a}\in\operatorname{Ind}_{\mathrm{BZ}}\\
|\boldsymbol{a}|<|\boldsymbol{k}|
}
}F_{\mathrm{BZ}}(\boldsymbol{a})\odot\operatorname{reg}_{*}(v_{\boldsymbol{k},\boldsymbol{a}})\pmod{I^{\mathrm{alg}}_{\mathrm{CF}}}.
\]
Thus, 
\begin{align*}
F_{\mathrm{BZ}}(\boldsymbol{k})-F_{\mathrm{BZ}}(\boldsymbol{k}^{\dagger}) & \equiv-\sum_{\substack{\boldsymbol{a}\in\operatorname{Ind}_{\mathrm{BZ}}\\
|\boldsymbol{a}|<|\boldsymbol{k}|
}
}F_{\mathrm{BZ}}(\boldsymbol{a})\odot\operatorname{reg}_{*}(v_{\boldsymbol{k},\boldsymbol{a}})\\
 & \quad+\sum_{\substack{\boldsymbol{a}'\in\operatorname{Ind}_{\mathrm{BZ}}\\
|\boldsymbol{a}'|<|\boldsymbol{k}|
}
}F_{\mathrm{BZ}}(\boldsymbol{a}')\odot\operatorname{reg}_{*}(v_{\boldsymbol{k}^{\dagger},\boldsymbol{a}'})\\
 & =-\sum_{\substack{\boldsymbol{a}\in\operatorname{Ind}_{\mathrm{BZ}}\\
|\boldsymbol{a}|<|\boldsymbol{k}|
}
}(F_{\mathrm{BZ}}(\boldsymbol{a})-F_{\mathrm{BZ}}(\boldsymbol{a}^{\dagger}))\odot\operatorname{reg}_{*}(v_{\boldsymbol{k},\boldsymbol{a}})\qquad(\text{by \eqref{eq:v_dual}})\\
 & \equiv0\qquad(\text{by induction hypothesis}).\qedhere
\end{align*}
\end{proof}

\begin{thm}
If \Cref{conj:phi_dual_special} is true, then
\[
I^{\mathrm{SZ}}_{\mathrm{dual}}\subset I_{\mathrm{CF}}.
\]
\end{thm}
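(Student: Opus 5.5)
The plan is to repeat the argument of \Cref{thm:BZ_dual_conf}, but using the shifted confluence relation \eqref{eq:conf_T_theta} in place of \eqref{eq:conf_theta}. We then combine the result with \Cref{thm:BZ_dual_conf} itself, which is available under the same hypothesis.

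Fix $\boldsymbol{k}\in\operatorname{Ind}_{\mathrm{BZ}}$. Combining \eqref{eq:conf_T_theta}, \eqref{eq:conf_Ttheta_1} and \eqref{eq:conf_Ttheta_2} gives
\[
b_{\rho(\boldsymbol{k})}\equiv F_{\mathrm{BZ}}(\boldsymbol{k})+\sum_{\substack{\boldsymbol{a}\in\operatorname{Ind}_{\mathrm{BZ}}\\
|\boldsymbol{a}|<|\boldsymbol{k}|}}F_{\mathrm{BZ}}(\boldsymbol{a})\odot T_{\mathcal{B}}(v_{\boldsymbol{k},\boldsymbol{a}})\pmod{I^{\mathrm{alg}}_{\mathrm{CF}}},
\]
where we identify $F_{\mathrm{BZ}}(\boldsymbol{k})\odot1_{\mathbb{Q}\langle\mathcal{B}\rangle^{0}}$ with $F_{\mathrm{BZ}}(\boldsymbol{k})$ modulo the generator $1_{\mathbb{Q}\langle\mathcal{B}\rangle^{0}}-1_{\mathcal{S}}$. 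Writing the same congruence for $\boldsymbol{k}^{\dagger}$ and subtracting, we reindex the second sum by $\boldsymbol{a}'=\boldsymbol{a}^{\dagger}$. This is legitimate because $\dagger$ is an involution of $\operatorname{Ind}_{\mathrm{BZ}}$ preserving $|\cdot|$, so that also $|\boldsymbol{k}^{\dagger}|=|\boldsymbol{k}|$. Using \eqref{eq:v_dual}, i.e.\ $v_{\boldsymbol{k}^{\dagger},\boldsymbol{a}^{\dagger}}=v_{\boldsymbol{k},\boldsymbol{a}}$, we obtain
\[
b_{\rho(\boldsymbol{k})}-b_{\rho(\boldsymbol{k}^{\dagger})}\equiv\bigl(F_{\mathrm{BZ}}(\boldsymbol{k})-F_{\mathrm{BZ}}(\boldsymbol{k}^{\dagger})\bigr)+\sum_{\boldsymbol{a}}\bigl(F_{\mathrm{BZ}}(\boldsymbol{a})-F_{\mathrm{BZ}}(\boldsymbol{a}^{\dagger})\bigr)\odot T_{\mathcal{B}}(v_{\boldsymbol{k},\boldsymbol{a}}).
\]

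By \Cref{thm:BZ_dual_conf}, each difference $F_{\mathrm{BZ}}(\cdot)-F_{\mathrm{BZ}}(\cdot^{\dagger})$ lies in $I_{\mathrm{CF}}\subset I^{\mathrm{alg}}_{\mathrm{CF}}$. Since $I^{\mathrm{alg}}_{\mathrm{CF}}$ is an ideal, the products with $T_{\mathcal{B}}(v_{\boldsymbol{k},\boldsymbol{a}})\in\mathbb{Q}\langle\mathcal{B}\rangle^{0}$ also lie in it. Hence $b_{\rho(\boldsymbol{k})}-b_{\rho(\boldsymbol{k}^{\dagger})}\in I^{\mathrm{alg}}_{\mathrm{CF}}\cap\mathbb{Q}\langle\mathcal{B}\rangle^{0}=I_{\mathrm{CF}}$.

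It remains to check the combinatorics. First, $\rho\colon\operatorname{Ind}_{\mathrm{BZ}}\to\operatorname{Ind}_{\mathrm{SZ}}$ is a bijection. Second, $\rho(\boldsymbol{k}^{\dagger})=\rho(\boldsymbol{k})^{\vee}$. Indeed, if $\boldsymbol{k}=(\{1\}^{d_{1}-1},c_{1}+1,\ldots,\{1\}^{d_{r}-1},c_{r}+1)$, then $\rho(\boldsymbol{k})=(\{0\}^{d_{1}-1},c_{1},\ldots,\{0\}^{d_{r}-1},c_{r})$. Its SZ dual $(\{0\}^{c_{r}-1},d_{r},\ldots,\{0\}^{c_{1}-1},d_{1})$ is exactly $\rho$ applied to $\boldsymbol{k}^{\dagger}=(\{1\}^{c_{r}-1},d_{r}+1,\ldots)$. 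Thus every generator $b_{\boldsymbol{l}}-b_{\boldsymbol{l}^{\vee}}$ of $I^{\mathrm{SZ}}_{\mathrm{dual}}$ lies in $I_{\mathrm{CF}}$. The only delicate points are the reindexing step and the use of the ideal property of $I^{\mathrm{alg}}_{\mathrm{CF}}$; the rest is bookkeeping.
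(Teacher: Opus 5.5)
Your proposal is correct and follows the paper's proof essentially line by line. The paper likewise derives the congruence for $b_{\rho(\boldsymbol{k})}$ from \eqref{eq:conf_T_theta}, \eqref{eq:conf_Ttheta_1} and \eqref{eq:conf_Ttheta_2}, subtracts the $\boldsymbol{k}^{\dagger}$ version using \eqref{eq:v_dual}, removes the remaining differences with \Cref{thm:BZ_dual_conf}, and concludes via $\rho(\boldsymbol{k}^{\dagger})=\rho(\boldsymbol{k})^{\vee}$; your extra remarks (weight preservation of $\dagger$, the generator $1_{\mathbb{Q}\langle\mathcal{B}\rangle^{0}}-1_{\mathcal{S}}$, the ideal property) only make explicit details the paper leaves implicit.
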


\begin{proof}
Let $\boldsymbol{k}\in\operatorname{Ind}_{\mathrm{BZ}}$. By \eqref{eq:conf_T_theta},
\eqref{eq:conf_Ttheta_1}, and \eqref{eq:conf_Ttheta_2}, we have
\[
b_{\rho(\boldsymbol{k})}\equiv F_{\mathrm{BZ}}(\boldsymbol{k})+\sum_{\substack{\boldsymbol{a}\in\operatorname{Ind}_{\mathrm{BZ}}\\
|\boldsymbol{a}|<|\boldsymbol{k}|
}
}F_{\mathrm{BZ}}(\boldsymbol{a})\odot T_{\mathcal{B}}(v_{\boldsymbol{k},\boldsymbol{a}})\pmod{I^{\mathrm{alg}}_{\mathrm{CF}}}.
\]
Thus, 
\begin{align*}
b_{\rho(\boldsymbol{k})}-b_{\rho(\boldsymbol{k}^{\dagger})} & \equiv F_{\mathrm{BZ}}(\boldsymbol{k})+\sum_{\substack{\boldsymbol{a}\in\operatorname{Ind}_{\mathrm{BZ}}\\
|\boldsymbol{a}|<|\boldsymbol{k}|
}
}F_{\mathrm{BZ}}(\boldsymbol{a})\odot T_{\mathcal{B}}(v_{\boldsymbol{k},\boldsymbol{a}})\\
 & \quad-F_{\mathrm{BZ}}(\boldsymbol{k}^{\dagger})-\sum_{\substack{\boldsymbol{a}'\in\operatorname{Ind}_{\mathrm{BZ}}\\
|\boldsymbol{a}'|<|\boldsymbol{k}|
}
}F_{\mathrm{BZ}}(\boldsymbol{a}')\odot T_{\mathcal{B}}(v_{\boldsymbol{k}^{\dagger},\boldsymbol{a}'})\\
 & =\left(F_{\mathrm{BZ}}(\boldsymbol{k})-F_{\mathrm{BZ}}(\boldsymbol{k}^{\dagger})\right)+\sum_{\substack{\boldsymbol{a}\in\operatorname{Ind}_{\mathrm{BZ}}\\
|\boldsymbol{a}|<|\boldsymbol{k}|
}
}(F_{\mathrm{BZ}}(\boldsymbol{a})-F_{\mathrm{BZ}}(\boldsymbol{a}^{\dagger}))\odot T_{\mathcal{B}}(v_{\boldsymbol{k},\boldsymbol{a}})\qquad(\text{by \eqref{eq:v_dual}})\\
 & \equiv0\qquad(\text{by \cref{thm:BZ_dual_conf}})
\end{align*}
modulo $I^{\mathrm{alg}}_{\mathrm{CF}}$. Since $\rho(\operatorname{Ind}_{\mathrm{BZ}})=\operatorname{Ind}_{\mathrm{SZ}}$
and $\rho(\boldsymbol{k}^{\dagger})=\rho(\boldsymbol{k})^{\vee}$,
this completes the proof.
\end{proof}

\section{\label{sec:Further_questions}Further questions}

We conclude by recording several questions arising from our construction.
\begin{itemize}
\item \textbf{Alternative paths.} The construction is based on the standard
path from $u(z)$ to $u(zq)$. It is natural to ask what relations
are obtained by using other paths in place of the standard path. We
expect that the analogue of $\varphi^{\pi}(u)$ obtained from such
a path is independent of the chosen path.
\item \textbf{The duality conjecture.} It remains to prove \Cref{conj:phi_dual}.
\item \textbf{Classical confluence relations.} The relationship between
the confluence relations constructed in this paper and the classical
confluence relations remains to be investigated in more detail.
\item \textbf{The pentagon equation.} In the classical setting, Furusho
proved that Drinfeld\textquoteright s pentagon equation is equivalent
to the confluence relations for classical MZVs \cite{Furusho_pentagon_confluence}.
This raises the problem of formulating a $q$-analogue of the pentagon
equation for $q$MZVs and establishing its equivalence with the confluence
relations constructed in this paper.
\item \textbf{Multiple Eisenstein series.} Another direction is to construct
confluence relations for multiple Eisenstein series.
\end{itemize}

\appendix

\section{Summary of notation for spaces of $q$MPLs\label{app:notation-summary}}

\[
\hat{\mathcal{F}}=\left\{ \mathbb{L}_{\boldsymbol{k}}(\boldsymbol{x})\,\middle|\,\begin{array}{l}
d\geq0,\quad k_{i}\in\mathbb{Z}_{\geq1},\\
x_{i}\in\{z^{h}q^{m}\mid h\in\{0,\pm1\},m\in\mathbb{Z}\},\\
x_{i}\cdots x_{d}\in zq^{\mathbb{Z}}\cup q^{\mathbb{Z}_{>0}}\quad(1\leq i\leq d)
\end{array}\right\} ,
\]

\[
\mathcal{F}^{n}=\{\mathbb{L}_{\boldsymbol{k}}(\boldsymbol{x})\in\hat{\mathcal{F}}\,\mid\,x_{i}\in\{vq^{s}\,\mid\,v\in\{1,zq^{n},(zq^{n})^{-1}\},\,0\leq s\leq k_{i}\}\},
\]
\[
\mathcal{G}=\{\mathbb{L}_{\boldsymbol{k}}(\boldsymbol{x})\in\mathcal{F}^{0}\,\mid\,d=0\text{ or }x_{d}\neq z\},
\]
\[
\mathcal{F}=\bigcup_{n\in\mathbb{Z}}\mathcal{F}^{n},
\]
\[
\mathcal{C}=\{u\in\mathcal{F}\,\mid\,u\text{ is independent of }z\},
\]
\[
\hat{\mathcal{C}}_{z}=\{\mathbb{L}_{\boldsymbol{k}}(\boldsymbol{x})\in\hat{\mathcal{F}}\,\mid\,x_{i}\in q^{\mathbb{Z}}\quad(1\leq i<d),\,x_{d}\in zq^{\mathbb{Z}}\}\cup\{\mathbb{L}(\emptyset)\},
\]
\[
\mathcal{C}^{n}_{z}=\left\{ \mathbb{L}_{k_{1},\dots,k_{d}}(q^{n_{1}},\dots,q^{n_{d-1}},zq^{n+n_{d}})\,\middle|\,0\leq n_{i}\leq k_{i}\ (1\leq i\leq d)\right\} \cup\{\mathbb{L}(\emptyset)\}.
\]
Their inclusion relations are summarized as follows:
\[
\mathcal{C}\subset\mathcal{F}^{n}\subset\mathcal{F}\subset\hat{\mathcal{F}},\qquad\mathcal{G}\subset\mathcal{F}^{0},\qquad\mathcal{C}^{n}_{z}\subset\hat{\mathcal{C}}_{z}\subset\hat{\mathcal{F}}.
\]

\section{\label{app:Proof_of_some_claims}Proofs of some claims}

\subsection{\label{subsec:Proof_of_stuffle_residue}Proof of \Cref{thm:residue_and_stuffle_regularization}}
\begin{lem}
\label{lem:L_st_b0}For $u\in\mathbb{Q}\langle\mathcal{B}\rangle$,
we have
\[
L^{\mathrm{SZ}}_{q}(u*b_{0})(z)=z\frac{d}{dz}L^{\mathrm{SZ}}_{q}(u)(z)+\frac{z}{1-z}L^{\mathrm{SZ}}_{q}(u)(z).
\]
\end{lem}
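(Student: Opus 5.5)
We prove the identity on words $u=b_{k_1}\cdots b_{k_d}$ and extend by linearity, working with the defining series for $|z|$ small. We then extend to all $z$ by meromorphic continuation. The plan is an induction on $d$, using the recursive definition of the stuffle product. It is convenient to write the SZ $q$MPL as a sum over the top index:
\[
L^{\mathrm{SZ}}_{q}(u)(z)=\sum_{m>0}c_u(m)\,z^{m},
\]
where $c_u(m)$ is the sum over $0<m_1<\cdots<m_{d-1}<m_d=m$ of the weights. In this notation $z\frac{d}{dz}$ multiplies the $m$-th coefficient by $m$, and multiplication by $\frac{z}{1-z}=\sum_{n\ge1}z^n$ gives the partial sums $\sum_{m'<m}c_u(m')$. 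So the claim is equivalent to the coefficient identity
\[
c_{u*b_0}(m)=m\,c_u(m)+\sum_{0<m'<m}c_u(m')\qquad(m\ge1).
\]
When $u=1$, we read $c_1$ as the indicator of $m=0$. Then $L^{\mathrm{SZ}}_q(b_0)=z/(1-z)$, and the right-hand side is $0+z/(1-z)$, which checks the base case.

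To prove the coefficient identity, I interpret $b_0$ as a factor $\bigl(q^{n}/(1-q^{n})\bigr)^0=1$ attached to one extra summation index $n$. Then $L^{\mathrm{SZ}}_q(u)\cdot(\text{sum over }n)$ decomposes according to the position of $n$ relative to $m_1<\cdots<m_d$: either $n$ lies strictly between consecutive $m_j$, or $n$ coincides with some $m_j$. This is exactly the harmonic (stuffle) expansion. Each coincidence produces $b_{k_j+0}=b_{k_j}$, so it reproduces $u$ itself.

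The key point is that $z^{m_d}$ only records the largest index. Because of this, the decomposition must be restricted to $n\le m_d$ (otherwise the position $n>m_d$ would change the exponent). With that restriction, the terms of $u*b_0$ in which $b_0$ is inserted strictly before the last letter, or merged into it, together count exactly $m_d$ choices of $n\in\{1,\dots,m_d\}$. They therefore contribute $m\,c_u(m)$. This uses the recursion $(b_iu)*(b_jv)$ with $j=0$, read from the end of the word.

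The one remaining term is $ub_0$, in which $b_0$ comes last. By the identity $\mathrm{Li}^{\mathrm{SZ}}_{q;\boldsymbol{k},0}(z)=\frac{z}{1-z}\mathrm{Li}^{\mathrm{SZ}}_{q;\boldsymbol{k}}(z)$, it gives exactly $\frac{z}{1-z}L^{\mathrm{SZ}}_q(u)$.

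The main bookkeeping obstacle is to check that the stuffle recursion, which is defined from the left, really yields all insertions of $b_0$ into $u$. By this I mean each gap before the last letter with multiplicity one, each merge with a letter, and the final insertion. To handle it, I would first prove the closed form
\[
u*b_0=\sum_{\text{insertions}}(\cdots)+d\cdot u+ub_0,
\]
by a short induction on $d$ using $(b_iu')*b_0=b_i(u'*b_0)+b_0b_iu'+b_iu'$. After that, the counting argument matches $m_d-d$ free gap values plus $d$ coincidences, giving $m_d$ in total.
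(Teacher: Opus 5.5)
Your proposal is correct and is essentially the paper's proof. The paper likewise uses the closed form $u*b_0=\sum_{i=0}^{d}b_{k_1}\cdots b_{k_i}b_0b_{k_{i+1}}\cdots b_{k_d}+d\,u$, evaluates the insertion in the $i$-th gap ($i<d$) as the SZ series weighted by $m_{i+1}-m_i-1$, and concludes with $\sum_{i=0}^{d-1}(m_{i+1}-m_i-1)+d=m_d$ together with $L^{\mathrm{SZ}}_{q}(ub_0)=\frac{z}{1-z}L^{\mathrm{SZ}}_{q}(u)$ (cosmetically, your coefficient identity should sum over $0\le m'<m$ to cover $u=1$, which you checked separately anyway).
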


\begin{proof}
By $\mathbb{Q}$-linearity, it suffices to consider the case
\[
u=b_{k_{1}}\cdots b_{k_{r}}.
\]
Then, we have
\[
u*b_{0}=\sum^{r}_{i=0}b_{k_{1}}\cdots b_{k_{i}}b_{0}b_{k_{i+1}}\cdots b_{k_{r}}+ru.
\]
Since
\[
L^{\mathrm{SZ}}_{q}(b_{k_{1}}\cdots b_{k_{i}}b_{0}b_{k_{i+1}}\cdots b_{k_{r}})(z)=\sum_{0=m_{0}<m_{1}<\cdots<m_{r}}(m_{i+1}-m_{i}-1)\prod^{r}_{j=1}\left(\frac{q^{m_{j}}}{1-q^{m_{j}}}\right)^{k_{j}}z^{m_{r}}
\]
for $0\leq i<r$ and
\[
L^{\mathrm{SZ}}_{q}(b_{k_{1}}\cdots b_{k_{r}}b_{0})(z)=\frac{z}{1-z}L^{\mathrm{SZ}}_{q}(b_{k_{1}}\cdots b_{k_{r}})(z),
\]
we have
\begin{align*}
L^{\mathrm{SZ}}_{q}(u*b_{0})(z) & =\sum_{0=m_{0}<m_{1}<\cdots<m_{r}}\left(\sum^{r-1}_{i=0}(m_{i+1}-m_{i}-1)+r\right)\prod^{r}_{j=1}\left(\frac{q^{m_{j}}}{1-q^{m_{j}}}\right)^{k_{j}}z^{m_{r}}+\frac{z}{1-z}L^{\mathrm{SZ}}_{q}(u)(z)\\
 & =z\frac{d}{dz}L^{\mathrm{SZ}}_{q}(u)(z)+\frac{z}{1-z}L^{\mathrm{SZ}}_{q}(u)(z).\qedhere
\end{align*}
\end{proof}

\begin{lem}
For $u\in\mathbb{Q}\langle\mathcal{B}\rangle$, we have
\[
\operatorname*{Res}_{z=1}\frac{L^{\mathrm{SZ}}_{q}(u*b_{0})(z)}{z(z-1)}\,dz=0.
\]
\end{lem}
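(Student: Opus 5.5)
By \Cref{lem:L_st_b0}, the claim reduces to an identity about a single meromorphic function, namely that the integrand is an exact derivative. Put $f\coloneqq L^{\mathrm{SZ}}_{q}(u)\in\mathcal{M}(\mathbb{C})$. This is meromorphic on all of $\mathbb{C}$: each $\mathrm{Li}^{\mathrm{SZ}}_{q;k_1,\dots,k_d}$ continues meromorphically, with possible poles only in $q^{\mathbb{Z}_{\leq0}}$. Note that $z=1$ may well be a pole of $f$; the argument below does not care.

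By \Cref{lem:L_st_b0},
\[
\frac{L^{\mathrm{SZ}}_{q}(u*b_{0})(z)}{z(z-1)}=\frac{zf'(z)+\frac{z}{1-z}f(z)}{z(z-1)}=\frac{f'(z)}{z-1}-\frac{f(z)}{(z-1)^{2}}.
\]
Here the factor $z$ cancels identically. The right-hand side is exactly
\[
\frac{d}{dz}\left(\frac{f(z)}{z-1}\right),
\]
which is the key observation.

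It remains to recall the standard fact that the derivative of a meromorphic function has zero residue at every point. Write the Laurent expansion $g(z)=\sum_{n\geq -N}c_{n}(z-1)^{n}$ of $g\coloneqq f/(z-1)$ at $z=1$. Differentiating termwise gives $g'(z)=\sum nc_{n}(z-1)^{n-1}$, and the coefficient of $(z-1)^{-1}$ is $0\cdot c_{0}=0$. Hence $\operatorname{Res}_{z=1}g'(z)\,dz=0$, which proves the lemma.

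There is no real obstacle here. The only points needing care are that $f$ is genuinely meromorphic in a neighbourhood of $z=1$ (so that the termwise Laurent argument is legitimate), and the sign bookkeeping $\frac{z}{1-z}\cdot\frac{1}{z(z-1)}=-\frac{1}{(z-1)^{2}}$.
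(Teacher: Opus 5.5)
Your proposal is correct and follows essentially the same route as the paper. The paper also uses \Cref{lem:L_st_b0} to rewrite the integrand as $\frac{d}{dz}\bigl(L^{\mathrm{SZ}}_{q}(u)(z)/(z-1)\bigr)$, and then uses the fact that the derivative of a Laurent series has zero residue; your extra remarks on meromorphy near $z=1$ and on the sign bookkeeping are accurate.
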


\begin{proof}
By \Cref{lem:L_st_b0}, we have
\begin{align*}
\frac{L^{\mathrm{SZ}}_{q}(u*b_{0})(z)}{z(z-1)} & =\frac{d}{dz}\left(\frac{L^{\mathrm{SZ}}_{q}(u)(z)}{z-1}\right).
\end{align*}
Since the residue of the derivative of a Laurent series is zero, the
residue vanishes.
\end{proof}

\begin{lem}
For $u\in\mathbb{Q}\langle\mathcal{B}\rangle^{0}$, we have
\[
\operatorname*{Res}_{z=1}\frac{L^{\mathrm{SZ}}_{q}(u)(z)}{z(z-1)}\,dz=Z^{\mathrm{SZ}}_{q}(u).
\]
\end{lem}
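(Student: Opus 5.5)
For $u\in\mathbb{Q}\langle\mathcal{B}\rangle^{0}$, the plan is to show that $L^{\mathrm{SZ}}_{q}(u)$ is holomorphic at $z=1$ with value $Z^{\mathrm{SZ}}_{q}(u)$. Granting this, the residue is immediate:
\[
\operatorname*{Res}_{z=1}\frac{L^{\mathrm{SZ}}_{q}(u)(z)}{z(z-1)}\,dz=\frac{L^{\mathrm{SZ}}_{q}(u)(1)}{1}=Z^{\mathrm{SZ}}_{q}(u).
\]
By $\mathbb{Q}$-linearity it suffices to treat $u=1$ and single words $u=b_{k_{1}}\cdots b_{k_{d}}$ with $d\ge1$ and $k_{d}\ge1$. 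The case $u=1$ is trivial, since $L^{\mathrm{SZ}}_{q}(1)=1$ and the residue of $dz/(z(z-1))$ at $z=1$ equals $1=Z^{\mathrm{SZ}}_{q}(1)$.

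For a word with $k_{d}\ge1$, I use the stated convergence fact: the defining series of $\mathrm{Li}^{\mathrm{SZ}}_{q;k_{1},\dots,k_{d}}(z)$ converges absolutely for $|z|<|q|^{-k_{d}}$. This disc has radius strictly greater than $1$, so it contains $z=1$. On any closed subdisc of radius $R$ with $1<R<|q|^{-k_{d}}$, the terms are bounded by the absolutely convergent series evaluated at $R$. Hence the series converges uniformly there, and its sum is holomorphic on this disc.

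This sum coincides with the meromorphic continuation $L^{\mathrm{SZ}}_{q}(u)$: both agree near $z=0$, and the identity theorem then gives equality on the whole disc. Substituting $z=1$ termwise gives exactly the series defining $\zeta^{\mathrm{SZ}}_{q}(k_{1},\dots,k_{d})$, since the factor $z^{m_{d}}$ becomes $1$. So $L^{\mathrm{SZ}}_{q}(u)(1)=Z^{\mathrm{SZ}}_{q}(u)$, and the displayed residue computation finishes the proof.

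The main point to check is the convergence claim itself, which rests on the hypothesis $k_{d}\ge1$. The last factor $(q^{m_{d}}/(1-q^{m_{d}}))^{k_{d}}$ has size about $|q|^{k_{d}m_{d}}$, which beats $|z|^{m_{d}}$ when $|z|<|q|^{-k_{d}}$. The inner sum over $m_{1}<\dots<m_{d-1}$ grows at most polynomially in $m_{d}$, because each factor $(q^{m}/(1-q^{m}))^{k}$ is bounded. The paper already asserts this convergence, so it can be quoted directly; the rest of the argument is routine.
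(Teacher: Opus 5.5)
Your proposal is correct and follows the same route as the paper, whose proof is simply the observation that $L^{\mathrm{SZ}}_{q}(u)$ is holomorphic at $z=1$ with value $Z^{\mathrm{SZ}}_{q}(u)$, so the residue equals that value. You additionally justify the holomorphy: the radius of convergence is $|q|^{-k_{d}}>1$ when $k_{d}\geq1$, and the series sum agrees with the continuation by the identity theorem. You also handle $u=1$ separately; both additions are sound.
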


\begin{proof}
Since $L^{\mathrm{SZ}}_{q}(u)$ is holomorphic at $z=1$ and $L^{\mathrm{SZ}}_{q}(u)(1)=Z^{\mathrm{SZ}}_{q}(u)$,
the assertion holds.
\end{proof}

We now prove the theorem.

\begin{proof}[Proof of \Cref{thm:residue_and_stuffle_regularization}]
Let $u\in\mathbb{Q}\langle\mathcal{B}\rangle$. By the stuffle-regularization
decomposition, write
\[
u=u_{0}+\sum^{k}_{j=1}u_{j}*b^{*j}_{0},\qquad u_{0},\ldots,u_{k}\in\mathbb{Q}\langle\mathcal{B}\rangle^{0}.
\]
For $j\geq1$, we have
\[
u_{j}*b^{*j}_{0}=\left(u_{j}*b^{*(j-1)}_{0}\right)*b_{0}.
\]
Hence, by linearity and the two preceding lemmas,
\[
\begin{aligned}\operatorname*{Res}_{z=1}\frac{L^{\mathrm{SZ}}_{q}(u)(z)}{z(z-1)}\,dz & =\operatorname*{Res}_{z=1}\frac{L^{\mathrm{SZ}}_{q}(u_{0})(z)}{z(z-1)}\,dz\\
 & \quad+\sum^{k}_{j=1}\operatorname*{Res}_{z=1}\frac{L^{\mathrm{SZ}}_{q}\left(\left(u_{j}*b^{*(j-1)}_{0}\right)*b_{0}\right)(z)}{z(z-1)}\,dz\\
 & =Z^{\mathrm{SZ}}_{q}(u_{0})\\
 & =Z^{\mathrm{SZ}}_{q}\!\left(\operatorname{reg}_{*}(u)\right).
\end{aligned}
\]
This proves the theorem.
\end{proof}

\subsection{\label{subsec:Proof_of_fn_standard}Proof of \Cref{thm:Fn_StandardPath}}
\begin{lem}
\label{lem:std_path_qmpls}Let $n\in\mathbb{Z}$ and $u\in\mathcal{F}^{n}$.
Let 
\[
\operatorname{Std}(u)=(u_{0},u_{1},\dots,u_{l})
\]
be the standard path from $u(z)$ to $u(zq)$. Then $u_{0},\dots,u_{l}$
are formal $q$MPLs.
\end{lem}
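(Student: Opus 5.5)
The plan is to verify directly that every intermediate symbol $u_j$ in $\operatorname{Std}(u)$ satisfies the defining conditions of $\hat{\mathcal{F}}$. The entries of $u_j$ are visibly of the form $z^{h}q^{m}$ with $h\in\{0,\pm1\}$, since replacing $x_i(z)$ by $x_i(zq)$ only shifts the exponent of $q$. The weights are unchanged. So the only thing to check is the suffix condition \eqref{eq:condition_qMPL} for each suffix product $P_i=x_i\cdots x_d$ of $u_j$.

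First I record the shape of $u\in\mathcal{F}^{n}$. A $z$-dependent entry is either $zq^{n+s}$ or $z^{-1}q^{-n+s}$ with $0\le s\le k_i$. Its level is therefore $r=n+s\ge n$ in the first case and $r=n+1-s\le n+1$ in the second. Constant entries are $q^{s}$ with $s\ge0$.

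Since the $z$-exponent of every suffix product of $u$ lies in $\{0,1\}$ and each $z$-dependent entry contributes $\pm1$, reading a suffix from the right the $z$-entries must alternate $z,z^{-1},z,z^{-1},\dots$. The $z$-exponent of a suffix is the same in $u_j$ as in $u$. Hence if the $z$-exponent is $1$, then $P_i\in zq^{\mathbb{Z}}$ automatically. It remains to treat suffixes with $z$-exponent $0$.

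In such a suffix, the $z$-dependent entries split into consecutive pairs: an entry $z^{-1}q^{1-r}$ at position $p$, followed by the next $z$-dependent entry $zq^{r'}$ at some position $p'>p$. The $q$-exponent of $P_i$ is the sum of the exponents of the constant entries, which are $\ge0$ and unchanged along the path, plus the contributions of the pairs. In $u$, a pair contributes $c=1-r+r'=s+s'\ge0$ by the level bounds above.

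In $u_j$, a pair contributes $c$ if neither or both entries have been replaced, and $c+1$ if only the $z$-entry has been replaced. It contributes $c-1$ if only the $z^{-1}$-entry has been replaced. The key step is to show this last case forces $c\ge2$, using the replacement order. Entries are replaced in increasing level, and ties are replaced right to left, so the right entry $p'$ goes first. Hence the $z^{-1}$-entry at $p$ is replaced strictly before the $z$-entry at $p'$ only if $r<r'$, i.e.\ $c\ge2$. Therefore every pair contributes $\ge0$ in $u_j$, and contributes $\ge1$ whenever it contributed $\ge1$ in $u$.

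Since the $q$-exponent of $P_i$ in $u$ is positive (because $u\in\hat{\mathcal{F}}$), some constant entry or some pair in the suffix contributes positively in $u$. Constants are unchanged along the path, and positive pair contributions remain positive by the previous step, so the $q$-exponent of $P_i$ in $u_j$ is positive. Thus $P_i\in q^{\mathbb{Z}_{>0}}$, which gives \eqref{eq:condition_qMPL} for $u_j$.

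I expect the main subtlety to be this pairing argument, specifically checking that the tie-breaking rule (right to left at equal level) is exactly what excludes the dangerous case $r=r'$. That is the situation exhibited in \Cref{exa:invalid_intermediate_term}.
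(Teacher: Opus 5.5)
Your proof is correct and follows essentially the same route as the paper. In a suffix of $z$-exponent $0$, you pair each $z^{-1}$-entry with the next $z$-entry and note that each pair contributes $s+s'\ge 0$ in $u$. You then use the level order with right-to-left tie-breaking to show that a pair can lose a factor of $q$ only when $s+s'\ge 2$. The paper phrases this as a contradiction (assuming some suffix lies in $q^{\mathbb{Z}_{\le 0}}$ and deducing $r_{a_t}\ge r_{b_t}$ for all pairs), whereas you argue directly, which is only a cosmetic difference.
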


\begin{proof}
Write
\[
u_{i}=\mathbb{L}_{k_{1},\ldots,k_{d}}(x^{(i)}_{1},\ldots,x^{(i)}_{d})\qquad(0\leq i\leq l),
\]
and
\[
u=\mathbb{L}_{k_{1},\ldots,k_{d}}(x_{1},\ldots,x_{d})=\mathbb{L}_{k_{1},\ldots,k_{d}}(z^{h_{1}}q^{m_{1}},\ldots,z^{h_{d}}q^{m_{d}}).
\]
We prove that each $u_{i}$ is a formal $q$MPL by contradiction.
Assume that $u_{i}$ is not a formal $q$MPL, i.e., there exists $p\in\{1,\dots,d\}$
such that
\begin{equation}
x^{(i)}_{p}\cdots x^{(i)}_{d}\in q^{\mathbb{Z}_{\leq0}}.\label{eq:assumption_prod_in}
\end{equation}
Since $u\in\mathcal{F}^{n}$, a $z$-dependent argument has one of
the forms
\[
x_{j}=zq^{n+s_{j}}\qquad\text{or}\qquad x_{j}=z^{-1}q^{-n+s_{j}}\qquad(0\leq s_{j}\leq k_{j}).
\]
For a $z$-dependent position $j$, let $r_{j}$ denote its level.
Then, by definition of the level,
\begin{equation}
r_{j}=n+s_{j}\quad(h_{j}=1),\qquad r_{j}=n+1-s_{j}\qquad(h_{j}=-1).\label{eq:rj_sj}
\end{equation}

Let
\begin{align*}
A & \coloneqq\{a\in\{p,\dots,d\}\,\mid\,h_{a}=-1\},\\
B & \coloneqq\{b\in\{p,\dots,d\}\,\mid\,h_{b}=1\}.
\end{align*}
Then, by assumption, $\#A=\#B$ and
\[
A=\{a_{1}<\cdots<a_{T}\},B=\{b_{1}<\cdots<b_{T}\}
\]
with 
\[
p\leq a_{1}<b_{1}<a_{2}<b_{2}<\cdots<a_{T}<b_{T}\leq d.
\]

We first establish the following two claims:
\begin{equation}
r_{a_{t}}<r_{b_{t}}\Longrightarrow x^{(i)}_{a_{t}}x^{(i)}_{b_{t}}\in q^{\mathbb{Z}_{\geq1}},\label{eq:xat_xbt_geq_1}
\end{equation}
\begin{equation}
x^{(i)}_{a_{t}}x^{(i)}_{b_{t}}\in q^{\mathbb{Z}_{\geq0}},\label{eq:xat_xbt_nonnegative}
\end{equation}
for $t\in\{1,\dots,T\}$. We begin with \eqref{eq:xat_xbt_geq_1}.
By \eqref{eq:rj_sj}, the assumption $r_{a_{t}}<r_{b_{t}}$ is equivalent
to
\[
s_{a_{t}}+s_{b_{t}}\geq2.
\]
Thus,
\[
x^{(i)}_{a_{t}}x^{(i)}_{b_{t}}=x_{a_{t}}x_{b_{t}}\cdot\frac{x^{(i)}_{a_{t}}}{x_{a_{t}}}\cdot\frac{x^{(i)}_{b_{t}}}{x_{b_{t}}}\in q^{\mathbb{Z}_{\geq2}}q^{\{-1,0\}}q^{\{0,1\}}=q^{\mathbb{Z}_{\geq1}}.
\]
We next prove \eqref{eq:xat_xbt_nonnegative}. Since the case $r_{a_{t}}<r_{b_{t}}$
follows from \eqref{eq:xat_xbt_geq_1}, we assume that $r_{a_{t}}\geq r_{b_{t}}$.
Then
\[
x_{a_{t}}x_{b_{t}}=q^{s_{a_{t}}+s_{b_{t}}}\in q^{\mathbb{Z}_{\geq0}}.
\]
Furthermore, since the $b_{t}$-th position is replaced before the
$a_{t}$-th position, we have
\[
\frac{x^{(i)}_{a_{t}}}{x_{a_{t}}}\cdot\frac{x^{(i)}_{b_{t}}}{x_{b_{t}}}\in q^{\{0,1\}}.
\]
Thus, \eqref{eq:xat_xbt_nonnegative} holds.

Then, by \eqref{eq:assumption_prod_in}, \eqref{eq:xat_xbt_geq_1}
and \eqref{eq:xat_xbt_nonnegative}, for all $t\in\{1,\dots,T\}$,
$r_{a_{t}}\geq r_{b_{t}}$ and thus, 
\[
\frac{x^{(i)}_{a_{t}}}{x_{a_{t}}}\cdot\frac{x^{(i)}_{b_{t}}}{x_{b_{t}}}\in q^{\{0,1\}}.
\]
Therefore, 
\[
x^{(i)}_{p}\cdots x^{(i)}_{d}=x_{p}\cdots x_{d}\prod^{T}_{t=1}\frac{x^{(i)}_{a_{t}}}{x_{a_{t}}}\cdot\frac{x^{(i)}_{b_{t}}}{x_{b_{t}}}\in x_{p}\cdots x_{d}q^{\mathbb{Z}_{\geq0}}.
\]
Since $x_{p}\cdots x_{d}\in q^{\mathbb{Z}_{>0}}$ by the assumption
$u\in\hat{\mathcal{F}}$, this contradicts \eqref{eq:assumption_prod_in}.
This completes the proof that $u_{0},\dots,u_{l}$ are formal $q$MPLs.
\end{proof}

\begin{lem}
\label{lem:exists_ldash}Let $n\in\mathbb{Z}$ and $u\in\mathcal{F}^{n}$.
Let 
\[
\operatorname{Std}(u)=(u_{0},u_{1},\dots,u_{l})
\]
be the standard path from $u(z)$ to $u(zq)$. Then there exists $0\leq l'\leq l$
such that
\[
u_{1},\dots,u_{l'}\in\mathcal{F}^{n}\qquad\text{and}\qquad u_{l'},\dots,u_{l}\in\mathcal{F}^{n+1}.
\]
In other words, if $u_{i}\notin\mathcal{F}^{n+1}$ and $u_{i'}\notin\mathcal{F}^{n}$,
then $i+1<i'$.
\end{lem}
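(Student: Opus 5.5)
The plan is to analyze membership in $\mathcal{F}^{n}$ and $\mathcal{F}^{n+1}$ position by position, and to take $l'$ to be the number of $z$-dependent positions of $u$ whose level is at most $n$. By \Cref{lem:std_path_qmpls}, every $u_{i}$ already lies in $\hat{\mathcal{F}}$. Hence $u_{i}\in\mathcal{F}^{m}$ (for $m=n,n+1$) is purely a condition on each argument $x^{(i)}_{j}$ separately, namely $x^{(i)}_{j}\in\{vq^{s}\mid v\in\{1,zq^{m},(zq^{m})^{-1}\},0\leq s\leq k_{j}\}$. Constant positions $x_{j}=q^{s}$ with $0\leq s\leq k_{j}$ never change along the path and satisfy this for every $m$. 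So only $z$-dependent positions matter.

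For a $z$-dependent position $j$, write as in the proof of \Cref{lem:std_path_qmpls} $x_{j}=zq^{n+s_{j}}$ or $x_{j}=z^{-1}q^{-n+s_{j}}$ with $0\leq s_{j}\leq k_{j}$, so that $r_{j}=n+s_{j}$, respectively $r_{j}=n+1-s_{j}$, by \eqref{eq:rj_sj}. I will tabulate the two possible states of the argument: before replacement ($x_{j}$) and after replacement ($x_{j}(zq)$).

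\textbf{Case $h_{j}=1$.}
\begin{itemize}
\item Before replacement, the argument is of $\mathcal{F}^{n}$-type. It is of $\mathcal{F}^{n+1}$-type, i.e.\ $zq^{n+1+s'}$ with $s'=s_{j}-1\geq0$, iff $r_{j}\geq n+1$.
\item After replacement, $zq^{n+1+s_{j}}$ is of $\mathcal{F}^{n+1}$-type. It is of $\mathcal{F}^{n}$-type iff $s_{j}+1\leq k_{j}$, which certainly holds when $r_{j}\leq n$ (then $s_{j}\leq0<k_{j}$).
\end{itemize}

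\textbf{Case $h_{j}=-1$.}
\begin{itemize}
\item Before replacement, $z^{-1}q^{-n+s_{j}}$ is of $\mathcal{F}^{n}$-type. It is of $\mathcal{F}^{n+1}$-type, with $s'=s_{j}+1$, iff $s_{j}+1\leq k_{j}$, i.e.\ $r_{j}\geq n+2-k_{j}$. This holds whenever $r_{j}\geq n+1$, since $k_{j}\geq1$.
\item After replacement, $z^{-1}q^{-n-1+s_{j}}$ is of $\mathcal{F}^{n+1}$-type. It is of $\mathcal{F}^{n}$-type, with $s'=s_{j}-1$, iff $s_{j}\geq1$, i.e.\ $r_{j}\leq n$.
\end{itemize}

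In summary, in both cases: a position of level $\leq n$ is of $\mathcal{F}^{n}$-type both before and after replacement, and a position of level $\geq n+1$ is of $\mathcal{F}^{n+1}$-type both before and after replacement.

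Now use that the standard path replaces positions in increasing order of level. The first $l'$ steps replace exactly the positions of level $\leq n$. For $i\leq l'$, every replaced position has level $\leq n$ and every unreplaced position is in its original $\mathcal{F}^{n}$-type state, so $u_{i}\in\mathcal{F}^{n}$. For $i\geq l'$, every unreplaced position has level $\geq n+1$, hence is of $\mathcal{F}^{n+1}$-type by the summary, and every replaced position is of $\mathcal{F}^{n+1}$-type after replacement, so $u_{i}\in\mathcal{F}^{n+1}$.

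The only real work is the bookkeeping of the ranges $0\leq s\leq k_{j}$ in the four cases, especially the $h_{j}=-1$ sign conventions for the level. I expect no genuine obstacle beyond that once \Cref{lem:std_path_qmpls} supplies the product condition \eqref{eq:condition_qMPL}.
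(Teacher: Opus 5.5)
Your proof is correct and rests on the same key fact as the paper's proof: a $z$-dependent position of level $\le n$ is of $\mathcal{F}^{n}$-type both before and after replacement, and a position of level $\ge n+1$ is of $\mathcal{F}^{n+1}$-type both before and after. This is combined with the increasing-level order of the standard path, and with \Cref{lem:std_path_qmpls} to reduce membership to an argument-wise condition. The paper argues by contradiction instead: a replaced argument failing the $\mathcal{F}^{n}$ condition must have level $\ge n+1$, and an unreplaced one failing the $\mathcal{F}^{n+1}$ condition must have level $\le n$. You instead construct $l'$ explicitly as the number of positions of level $\le n$; the underlying computation is the same.
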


\begin{proof}
We use the same symbols as in the proof of \Cref{lem:std_path_qmpls}.
Assume that $u_{i}\notin\mathcal{F}^{n+1}$ and $u_{i'}\notin\mathcal{F}^{n}$.
By \Cref{lem:std_path_qmpls}, $u_{i}$ and $u_{i'}$ are formal $q$MPLs.
Hence their failure to belong to $\mathcal{F}^{n+1}$ and $\mathcal{F}^{n}$,
respectively, must come from the restrictions on their arguments in
the definitions of these sets.

Since $u_{i'}\notin\mathcal{F}^{n}$, there exists $a\in\{1,\dots,d\}$
such that
\[
x^{(i')}_{a}(z)=x_{a}(zq)\neq x_{a}(z)
\]
and
\[
x_{a}=zq^{n+k_{a}}\text{ or }x_{a}=z^{-1}q^{-n}.
\]
Then, the level $r_{a}$ is $n+k_{a}$ or $n+1$, and so, $r_{a}\geq n+1$.

Similarly, since $u_{i}\notin\mathcal{F}^{n+1}$, there exists $b\in\{1,\dots,d\}$
such that
\[
x^{(i)}_{b}(z)=x_{b}(z)\neq x_{b}(zq)
\]
and
\[
x_{b}=zq^{n}\text{ or }x_{b}=z^{-1}q^{-n+k_{b}}.
\]
Then, the level $r_{b}$ is $n$ or $n+1-k_{b}$, and so, $r_{b}\leq n$.

Therefore, $r_{b}<r_{a}$. By the rule that we replace entries in
increasing order, we have $i+1<i'$. 
\end{proof}

\begin{lem}
\label{lem:D_adjacent_fn}Let $n\in\mathbb{Z}$ and $v,v'\in\mathcal{F}^{n}$.
If $v$ and $v'$ are adjacent, then
\[
D(v,v')\in\mathbb{Q}\mathcal{F}^{n}\otimes(\mathbb{Q}e_{0}\oplus\mathbb{Q}e_{q^{-n}}).
\]
\end{lem}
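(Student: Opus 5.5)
The plan is a direct case check on the definition of $D$ (\Cref{def:D}). Since $D(v,v')=-D(v',v)$ and the target space is a vector space, I may assume that
\[
v=\mathbb{L}_{k_{1},\dots,k_{d}}(x_{1},\dots,x_{d}),\qquad v'=\mathbb{L}_{k_{1},\dots,k_{d}}(x_{1},\dots,qx_{j},\dots,x_{d}),
\]
with $x_{j}$ depending on $z$. Because $v,v'\in\mathcal{F}^{n}$, we have $x_{j}=zq^{n+s}$ or $x_{j}=z^{-1}q^{-n+s}$. Moreover, both $s$ and $s+1$ must be admissible exponents at position $j$, so $0\leq s\leq k_{j}-1$. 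For every term produced by $D$ I will check two things: the new symbol lies in $\hat{\mathcal{F}}$, and its arguments are of the form $wq^{t}$ with $w\in\{1,zq^{n},(zq^{n})^{-1}\}$ and $0\leq t$ at most the new weight of that position.

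\emph{Case $k_{j}>1$.} The output is $\mathbb{L}_{\dots,k_{j}-1,\dots}(x_{1},\dots,x_{d})\otimes e_{0}$, which has the same arguments as $v$. Condition \eqref{eq:condition_qMPL} is therefore inherited. The only new constraint is at position $j$, namely $s\leq k_{j}-1$, and this was just established.

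\emph{Case $k_{j}=1$.} Here $s=0$, so $x_{j}=zq^{n}$ or $x_{j}=z^{-1}q^{-n}$. In both subcases the exponent $m$ in \Cref{def:D} equals $n$, so only $e_{0}$ and $e_{q^{-n}}$ appear.

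The remaining work, and the only real point, is to check that the merged symbols with arguments $x_{j-1}x_{j}$ or $x_{j}x_{j+1}$ lie in $\mathcal{F}^{n}$. Membership in $\hat{\mathcal{F}}$ is immediate: the suffix products of the merged symbol form a subset of those of $v$. For the shape of the merged argument, I track the $z$-exponent $e_{i}$ of the suffix product $x_{i}\cdots x_{d}$. By \eqref{eq:condition_qMPL}, $e_{i}\in\{0,1\}$ for all $i$, with $e_{d+1}=0$, and $h_{i}=e_{i}-e_{i+1}$.

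If $x_{j}=zq^{n}$, then $e_{j}=1$ and $e_{j+1}=0$. This forces $x_{j-1}\neq z^{\pm}$ with the same sign, so $x_{j-1}$ is either constant $q^{t}$ or equal to $z^{-1}q^{-n+t}$. Likewise $x_{j+1}$ is constant or equal to $zq^{n+t}$. The products are then
\[
zq^{n}\cdot q^{t}=zq^{n+t},\qquad zq^{n}\cdot z^{-1}q^{-n+t}=q^{t},
\]
where $t\leq k_{j\pm1}$, the weight of the surviving neighbour. The case $x_{j}=z^{-1}q^{-n}$ is symmetric: the products are $z^{-1}q^{-n+t}$ or $q^{t}$. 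In all cases the merged argument is of the allowed form with exponent at most the weight of the surviving position. All other positions are untouched.

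The edge cases $j=1$ (the first term becomes $\mathbb{L}_{k_{2},\dots}(x_{2},\dots)$) and $j=d$ (the second term is $0$) are covered by the same check. Together these give $D(v,v')\in\mathbb{Q}\mathcal{F}^{n}\otimes(\mathbb{Q}e_{0}\oplus\mathbb{Q}e_{q^{-n}})$.

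The main obstacle is ruling out $z^{\pm2}$ in the merged argument, which is handled by the $e_{i}\in\{0,1\}$ bookkeeping above.
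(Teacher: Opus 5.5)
Your proposal is correct and follows the paper's proof, which consists only of the remark that the lemma follows from the definition of $D$; you write out that check in detail. The steps are: $0\le s\le k_j-1$, $m=n$ when $k_j=1$, and the $z$-exponent bookkeeping $e_i\in\{0,1\}$ for the merged arguments.

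There is one slip. When $x_j=zq^{n}$ you have $e_{j+1}=0$, so $h_{j+1}=-e_{j+2}\le 0$. Hence $x_{j+1}$ is constant or equal to $z^{-1}q^{-n+t}$, not $zq^{n+t}$, which would produce $z^{2}$. Your displayed products already use this corrected form.
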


\begin{proof}
It follows from the definition of $D$.
\end{proof}

\begin{thm*}[\Cref{thm:Fn_StandardPath}]
Let $n\in\mathbb{Z}$ and $u\in\mathcal{F}^{n}$. Let 
\[
\operatorname{Std}(u)=(u_{0},u_{1},\dots,u_{l})
\]
be the standard path from $u(z)$ to $u(zq)$. Then $u_{0},\dots,u_{l}$
are formal $q$MPLs. Furthermore, for $i=1,\dots,l$, the elements
$u_{i-1}$ and $u_{i}$ are adjacent and 
\[
D(u_{i-1},u_{i})\in\mathbb{Q}\mathcal{F}^{n}\otimes(\mathbb{Q}e_{0}\oplus\mathbb{Q}e_{q^{-n}})+\mathbb{Q}\mathcal{F}^{n+1}\otimes(\mathbb{Q}e_{0}\oplus\mathbb{Q}e_{q^{-(n+1)}}).
\]
\end{thm*}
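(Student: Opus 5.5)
The plan is to assemble the statement from the three lemmas just proved: \Cref{lem:std_path_qmpls}, \Cref{lem:exists_ldash} and \Cref{lem:D_adjacent_fn}. The first assertion, that each $u_i$ is a formal $q$MPL, is exactly \Cref{lem:std_path_qmpls}, so nothing further is needed there.

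For adjacency, observe that $u_{i-1}$ and $u_i$ differ in exactly one position $j$. In that position the entry $x_j(z)$ is replaced by $x_j(zq)$, so $x_j$ is $z$-dependent, i.e.\ $x_j\in zq^{\mathbb{Z}}\cup z^{-1}q^{\mathbb{Z}}$. The ratio of the old to the new entry is $q^{\mp1}$. Both elements are formal $q$MPLs by the first step, so they are adjacent in the sense of the definition. One small point to check: each position is replaced at most once along the standard path. The entry at step $i$ is therefore still the original $x_j(z)$ before replacement, and hence lies in $zq^{\mathbb{Z}}\cup z^{-1}q^{\mathbb{Z}}$ with the adjacency condition referring to that original form.

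For the inclusion, take $l'$ as in \Cref{lem:exists_ldash}. We split into two cases.
\begin{itemize}
\item If $i\le l'$, then $u_{i-1},u_i\in\mathcal{F}^n$. (For $i=1$ this uses $u_0=u\in\mathcal{F}^n$; \Cref{lem:exists_ldash} lists $u_1,\dots,u_{l'}$, and $u_0$ is trivially included.) \Cref{lem:D_adjacent_fn} then places $D(u_{i-1},u_i)$ in $\mathbb{Q}\mathcal{F}^n\otimes(\mathbb{Q}e_0\oplus\mathbb{Q}e_{q^{-n}})$.
\item If $i>l'$, then $i-1\ge l'$, so $u_{i-1},u_i\in\mathcal{F}^{n+1}$. \Cref{lem:D_adjacent_fn} with $n+1$ gives the second summand.
\end{itemize}
If $l'=0$, the first case is empty. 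Then $u_0=u$ must itself lie in $\mathcal{F}^{n+1}$, which \Cref{lem:exists_ldash} asserts since its list includes $u_{l'}=u_0$.

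The main obstacle is \Cref{lem:D_adjacent_fn}, which is stated only as ``from the definition'' and which I would verify by the same case split as in \Cref{def:D}.
\begin{itemize}
\item When $k_j>1$, the output keeps all arguments and lowers $k_j$ by one. Membership in $\mathcal{F}^n$ then requires $x_j=vq^s$ with $s\le k_j-1$. This holds because one of $v,v'$ has the smaller exponent, and we orient so that the first argument's exponent is the smaller one; the case $D(u,v)=-D(v,u)$ handles the other direction.
\item When $k_j=1$, the merged arguments $x_{j-1}x_j$ or $x_jx_{j+1}$ must again lie in the allowed set $\{vq^s: 0\le s\le k\}$. The weight of the merged position is unchanged while $s$ shifts by at most $k_j=1$, so this needs checking with the constraints $s\in\{0,1\}$ for position $j$.
\item The form $e_{q^{-m}}$ with $x_j=zq^m$ or $q^{-m}z^{-1}$ must have $m=n$, i.e.\ $s_j=0$, in the relevant orientation. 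This is where the bound $0\le s\le k_j=1$ is essential.
\end{itemize}
I expect the merged-argument membership check in the $k_j=1$ case to be the fiddliest part.
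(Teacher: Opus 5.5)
Your proposal is correct and follows the paper's proof, which likewise takes the first claim directly from \Cref{lem:std_path_qmpls} and obtains the inclusion by combining the splitting index $l'$ of \Cref{lem:exists_ldash} with \Cref{lem:D_adjacent_fn}; your explicit checks of adjacency and of the endpoint $u_{0}$ only spell out that one-line argument. If you verify \Cref{lem:D_adjacent_fn} yourself, two facts settle the $k_{j}=1$ case: $s_{j}=0$ means the merged argument keeps exactly the neighbour's exponent, and \eqref{eq:condition_qMPL} forces the $z$-dependent entries to alternate between $z$ and $z^{-1}$, which rules out a merged argument in $z^{\pm2}q^{\mathbb{Z}}$.
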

\begin{proof}
The first claim is just \Cref{lem:std_path_qmpls}. The second claim
follows from \Cref{lem:exists_ldash} and \Cref{lem:D_adjacent_fn}.
\end{proof}

\subsection{\label{subsec:proof_der_and_stuffle}Proof of \Cref{lem:der_and_stuffle}}
\begin{lem*}[\Cref{lem:der_and_stuffle}]
For $u\in\mathbb{Q}\mathcal{C}$, we have the following identity
of maps from $\mathbb{Q}(\mathcal{F}\cap\hat{\mathcal{C}}_{z})$ to
$\mathbb{Q}\mathcal{F}\otimes\Omega$:
\[
\partial\circ l^{*}_{u}=(l^{*}_{u}\otimes\mathrm{id}_{\Omega})\circ\partial.
\]
\end{lem*}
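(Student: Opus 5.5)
The proof is a direct term-by-term comparison. By linearity it suffices to take $u=\mathbb{L}_{l_{1},\dots,l_{r}}(c_{1},\dots,c_{r})\in\mathcal{C}$ and $v=\mathbb{L}_{k_{1},\dots,k_{d}}(x_{1},\dots,x_{d-1},y)\in\mathcal{F}\cap\hat{\mathcal{C}}_{z}$, where $x_{i}\in q^{\mathbb{Z}}$ and $y=zq^{m}$. The case $v=\mathbb{L}(\emptyset)$ is trivial, since both sides vanish.

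The first step is to observe that every term $w$ of $u*v$ has exactly one $z$-dependent position: the position $p(w)$ occupied by the letter $(k_{d},y)$, either standing alone or merged with some $(l_{j},c_{j})$ into $(k_{d}+l_{j},c_{j}y)$. The standard path $\operatorname{Std}(w)$ therefore consists of a single step, which replaces the argument at $p(w)$ by its $q$-shift. Hence $\partial(w)=D(w,w(zq))$ is given explicitly by \Cref{def:D}. On the other side, $\partial v=\mathbb{L}_{k_{1},\dots,k_{d}-1}(x_{1},\dots,x_{d-1},y)\otimes e_{0}$ if $k_{d}>1$, and $\partial v=-\mathbb{L}_{k_{1},\dots,k_{d-1}}(x_{1},\dots,x_{d-1}y)\otimes e_{q^{-m}}$ if $k_{d}=1$. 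All the terms involved lie in $\mathcal{F}$, because the constant arguments keep exponents in the allowed ranges. Condition \eqref{eq:condition_qMPL} also holds for every term: every tail product containing the $z$-position lies in $zq^{\mathbb{Z}}$, and every other tail product is a product of elements of $q^{\mathbb{Z}_{>0}}$.

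The case $k_{d}>1$ is then immediate. For each term $w$, the weight at $p(w)$ is at least $2$, so $\partial w$ lowers this weight by one and tensors with $e_{0}$. Lowering the weight of the last letter of $v$ gives a bijection between the stuffle terms of $u*v$ and those of $u*(\text{the first factor of }\partial v)$. Under this bijection $\partial w$ corresponds to the matching term, so $\partial(u*v)=(l^{*}_{u}\otimes\mathrm{id})(\partial v)$.

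The main work, and the step I expect to be the real obstacle, is the case $k_{d}=1$, which needs a cancellation argument. I would split the terms $w$ of $u*v$ into three types.
\begin{itemize}
\item[(a)] The letter $(1,y)$ is merged with some $(l_{j},c_{j})$. Then the weight at $p(w)$ is $l_{j}+1>1$, and $\partial w$ is the word with $(l_{j},c_{j}y)$ at that position, tensored with $e_{0}$.
\item[(b)] The letter $(1,y)$ stands alone at $p(w)$. Then $\partial w$ consists of a left-merge term $-(\cdots)\otimes e_{q^{-m}}$ and a right-merge term $+(\cdots)\otimes(e_{q^{-m}}-e_{0})$. The right neighbour of $y$ is either the end of the word or a letter $(l_{j},c_{j})$ of $u$, because every letter of $v$ precedes $y$.
\item[(c)] The left neighbour of a standalone $y$ is a letter of $u$, or the merged letter $(k_{d-1}+l_{j},x_{d-1}c_{j})$, or a letter of $v$, or the beginning of the word.
\end{itemize}
I would then pair terms as follows.
\begin{itemize}
\item The $-e_{0}$ part of a right merge of $y$ into $c_{j}$ cancels the $e_{0}$ term of type (a) with the same interleaving of the remaining letters.
\item The $+e_{q^{-m}}$ part of that right merge produces the word $\dots,(l_{j},yc_{j}),\dots$. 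This cancels the $-e_{q^{-m}}$ left-merge term of the configuration in which $c_{j}$ immediately precedes a standalone $y$, since both give the same formal $q$MPL with opposite signs.
\item The only survivors are the left-merge terms in which the left neighbour of $y$ is the $v$-letter $(k_{d-1},x_{d-1})$ (possibly merged with a $u$-letter), or the beginning of the word when $d=1$. Summed over all interleavings, these give exactly $-\bigl(u*\mathbb{L}_{k_{1},\dots,k_{d-1}}(x_{1},\dots,x_{d-1}y)\bigr)\otimes e_{q^{-m}}=(l^{*}_{u}\otimes\mathrm{id})(\partial v)$.
\end{itemize}

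The delicate points are two. First, the cancelling pairs must be in bijection with identical signs and the same $e_{b}$: here $b=q^{-m}$ throughout, because the $z$-argument is always $zq^{m}$, unaltered by merges with constant $c_{j}$ only when computing $D$ at a standalone $y$. Second, the boundary cases must be handled: $y$ at the end of the word, where the right-merge term is $0$, and $d=1$, where the left merge deletes the letter. I would organize this bookkeeping by writing each term of $u*v$ by specifying a stuffle of $u$ with $(k_{1},\dots,k_{d-1})$ together with the insertion slot of $y$ among the trailing letters of $u$.
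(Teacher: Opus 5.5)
Your proposal is correct and follows essentially the same route as the paper. The paper also reduces to basis elements, uses that every stuffle term has a one-step standard path, and treats $k_d>1$ term by term. For $k_d=1$ it performs exactly your three-way cancellation. It writes $\partial(u*v)=-T_1\otimes e_{q^{-m}}+T_2\otimes(e_{q^{-m}}-e_0)+T_3\otimes e_0$ and shows $T_2=T_3=\sum_\alpha U(\alpha)$ and $T_1=u*\mathbb{L}_{k_1,\dots,k_{d-1}}(x_1,\dots,x_{d-1}y)+\sum_\alpha U(\alpha)$, which are precisely your pairings indexed by the $u$-letter $c_j$ adjacent to $y$ and the stuffle of the preceding letters.
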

\begin{proof}
By $\mathbb{Q}$-bilinearity, it suffices to assume that $u$ and
$v$ are basis elements; that is,
\[
u\in\mathcal{C},\qquad v\in\mathcal{F}\cap\hat{\mathcal{C}}_{z}.
\]
 The assertion is immediate if $u$ or $v$ is the empty word, so
suppose that both are nonempty. Let
\[
u=\mathbb{L}_{k_{1},\ldots,k_{r}}(x_{1},\ldots,x_{r})\in\mathcal{C}
\]
and
\[
v=\mathbb{L}_{l_{1},\ldots,l_{s}}(y_{1},\ldots,y_{s})\in\mathcal{F}\cap\hat{\mathcal{C}}_{z}
\]
with $y_{s}=zq^{m}$. Every stuffle summand of $u*v$ contains exactly
one argument depending on $z$, and hence its standard path consists
of a single step. First suppose that $l_{s}>1$. Then
\[
\partial v=\mathbb{L}_{l_{1},\ldots,l_{s-1},l_{s}-1}(y_{1},\ldots,y_{s-1},zq^{m})\otimes e_{0}.
\]
For each stuffle datum contributing to $u*v$, applying $\partial$
decreases the weight of the unique $z$-dependent entry by one. If
the last entry of $v$ is not merged, this replaces $l_{s}$ by $l_{s}-1$;
if it is merged with an entry of weight $k$, this replaces $k+l_{s}$
by $k+l_{s}-1$. Thus, term by term,
\[
\partial(u*v)=(l^{*}_{u}\otimes\operatorname{id}_{\Omega})(\partial v).
\]
Suppose next that $l_{s}=1$. By the definition of $\partial$,
\[
\partial v=-\mathbb{L}_{l_{1},\ldots,l_{s-1}}(y_{1},\ldots,y_{s-1}y_{s})\otimes e_{q^{-m}},
\]
where the formal $q$MPL on the right-hand side is understood as $\mathbb{L}(\emptyset)$
when $s=1$. By definition,
\[
u*v=\sum^{r+s}_{t=\max(r,s)}\sum_{(f,g)\in\mathrm{Stu}_{t}(r,s)}\mathbb{L}_{m_{1},\dots,m_{t}}(w_{1},\dots,w_{t}),
\]
where
\[
(m_{i},w_{i})=(m^{(f,g)}_{i},w^{(f,g)}_{i})\coloneqq\begin{cases}
(k_{j},x_{j}) & \text{if }f^{-1}(\{i\})=\{j\}\text{ and }g^{-1}(\{i\})=\emptyset\\
(l_{j},y_{j}) & \text{if }f^{-1}(\{i\})=\emptyset\text{ and }g^{-1}(\{i\})=\{j\}\\
(k_{j}+l_{j'},x_{j}y_{j'}) & \text{if }f^{-1}(\{i\})=\{j\}\text{ and }g^{-1}(\{i\})=\{j'\}.
\end{cases}
\]
Here, the unique $z$-dependent entry appears at $i=g(s)$, and $m_{g(s)}$
is equal to $1$ if and only if $f^{-1}(\{g(s)\})=\emptyset$. When
$f^{-1}(\{g(s)\})=\emptyset$, we have
\begin{align*}
 & \partial\left(\mathbb{L}_{m_{1},\dots,m_{t}}(w_{1},\dots,w_{t})\right)\\
 & =-\mathbb{L}_{m_{1},\dots,\widehat{m_{g(s)}},\dots,m_{t}}(w_{1},\dots,w_{g(s)-1}w_{g(s)},\dots,w_{t})\otimes e_{q^{-m}}\\
 & \quad+\begin{cases}
\mathbb{L}_{m_{1},\dots,\widehat{m_{g(s)}},\dots,m_{t}}(w_{1},\dots,w_{g(s)}w_{g(s)+1},\dots,w_{t})\otimes(e_{q^{-m}}-e_{0}) & \text{if }g(s)<t\\
0 & \text{if }g(s)=t,
\end{cases}
\end{align*}
where we understand the first term as 
\[
-\mathbb{L}_{m_{2},\dots,m_{t}}(w_{2},\dots,w_{t})\otimes e_{q^{-m}}
\]
when $g(s)=1$. Thus
\[
\partial(u*v)=-T_{1}\otimes e_{q^{-m}}+T_{2}\otimes(e_{q^{-m}}-e_{0})+T_{3}\otimes e_{0},
\]
where
\begin{align*}
T_{1} & \coloneqq\sum^{r+s}_{t=\max(r,s)}\sum_{\substack{(f,g)\in\mathrm{Stu}_{t}(r,s)\\
f^{-1}(\{g(s)\})=\emptyset
}
}\mathbb{L}_{m_{1},\dots,\widehat{m_{g(s)}},\dots,m_{t}}(w_{1},\dots,w_{g(s)-1}w_{g(s)},\dots,w_{t}),\\
T_{2} & \coloneqq\sum^{r+s}_{t=\max(r,s)}\sum_{\substack{(f,g)\in\mathrm{Stu}_{t}(r,s)\\
f^{-1}(\{g(s)\})=\emptyset\\
g(s)<t
}
}\mathbb{L}_{m_{1},\dots,\widehat{m_{g(s)}},\dots,m_{t}}(w_{1},\dots,w_{g(s)}w_{g(s)+1},\dots,w_{t}),\\
T_{3} & \coloneqq\sum^{r+s}_{t=\max(r,s)}\sum_{\substack{(f,g)\in\mathrm{Stu}_{t}(r,s)\\
f^{-1}(\{g(s)\})\neq\emptyset
}
}\mathbb{L}_{m_{1},\dots,m_{g(s)}-1,\dots,m_{t}}(w_{1},\dots,w_{t}).
\end{align*}
For $\alpha=1,\dots,r$, we put
\[
U(\alpha)\coloneqq\sum^{\alpha+s-2}_{t=\max(\alpha-1,s-1)}\sum_{(f,g)\in\mathrm{Stu}_{t}(\alpha-1,s-1)}\mathbb{L}_{m_{1},\dots,m_{t},k_{\alpha},\dots,k_{r}}(w_{1},\dots,w_{t},x_{\alpha}zq^{m},x_{\alpha+1},\dots,x_{r}),
\]
where $(m_{i},w_{i})$ are defined from $(f,g)$ by the same rule
as above. Roughly speaking, $U(\alpha)$ is obtained by appending
\[
\mathbb{L}_{k_{\alpha},\dots,k_{r}}(x_{\alpha}zq^{m},x_{\alpha+1},\dots,x_{r})
\]
 to each summand of
\[
\mathbb{L}_{k_{1},\dots,k_{\alpha-1}}(x_{1},\dots,x_{\alpha-1})*\mathbb{L}_{l_{1},\ldots,l_{s-1}}(y_{1},\ldots,y_{s-1}).
\]
For $T_{1}$, we have
\begin{align*}
T_{1} & =\sum^{r+s}_{t=\max(r,s)}\sum_{\substack{(f,g)\in\mathrm{Stu}_{t}(r,s)\\
f^{-1}(\{g(s)\})=\emptyset\\
g(s)=1\text{ or }g^{-1}(\{g(s)-1\})\neq\emptyset
}
}\mathbb{L}_{m_{1},\dots,\widehat{m_{g(s)}},\dots,m_{t}}(w_{1},\dots,w_{g(s)-1}w_{g(s)},\dots,w_{t})\\
 & \quad+\sum^{r+s}_{t=\max(r,s)}\sum_{\substack{(f,g)\in\mathrm{Stu}_{t}(r,s)\\
f^{-1}(\{g(s)\})=\emptyset\\
g(s)>1,\,g^{-1}(\{g(s)-1\})=\emptyset
}
}\mathbb{L}_{m_{1},\dots,\widehat{m_{g(s)}},\dots,m_{t}}(w_{1},\dots,w_{g(s)-1}w_{g(s)},\dots,w_{t})
\end{align*}
Here, the first sum is equal to 
\[
u*\mathbb{L}_{l_{1},\ldots,l_{s-1}}(y_{1},\ldots,y_{s-1}y_{s}),
\]
while the second sum is equal to
\[
\sum^{r}_{\alpha=1}U(\alpha),
\]
as can be seen by taking $\alpha$ such that $f(\alpha)=g(s)-1$.
Thus,
\[
T_{1}=u*\mathbb{L}_{l_{1},\ldots,l_{s-1}}(y_{1},\ldots,y_{s-1}y_{s})+\sum^{r}_{\alpha=1}U(\alpha).
\]
For $T_{2}$, taking $\alpha$ such that $f(\alpha)=g(s)+1$, we obtain
\[
T_{2}=\sum^{r}_{\alpha=1}U(\alpha).
\]
For $T_{3}$, taking $\alpha$ such that $f(\alpha)=g(s)$, we obtain
\[
T_{3}=\sum^{r}_{\alpha=1}U(\alpha).
\]
Hence,
\begin{align*}
\partial(u*v) & =-T_{1}\otimes e_{q^{-m}}+T_{2}\otimes(e_{q^{-m}}-e_{0})+T_{3}\otimes e_{0}\\
 & =-u*\mathbb{L}_{l_{1},\ldots,l_{s-1}}(y_{1},\ldots,y_{s-1}y_{s})\otimes e_{q^{-m}}\\
 & =(l^{*}_{u}\otimes\operatorname{id}_{\Omega})(\partial v).
\end{align*}
This completes the proof.
\end{proof}

\section{\label{app:Tables}Tables related to \Cref{conj:phi_dual}}

\begin{longtable}[c]{>{\raggedright}p{0.14\linewidth}>{\raggedright}p{0.14\linewidth}>{\raggedright}p{0.63\linewidth}}
\caption{Expansions of $\varphi^{\pi}(\theta(\widetilde{\boldsymbol{k}}))$
for all admissible augmented indices of weight at most 3 and selected
higher-weight examples. The following notation is used in this table:
$\hat{k}\protect\coloneqq(0,k)$, $k\protect\coloneqq(1,k)$, $F_{\boldsymbol{k}}\protect\coloneqq F_{\mathrm{BZ}}(\boldsymbol{k})$,
$b_{k_{1},\cdots,k_{d}}\protect\coloneqq b_{k_{1}}\cdots b_{k_{d}}$. }
\tabularnewline
\toprule 
$\widetilde{\boldsymbol{k}}$  & $\widetilde{\boldsymbol{k}}^{\dagger}$  & $\varphi^{\pi}(\theta(\widetilde{\boldsymbol{k}}))$ \tabularnewline
\endfirsthead
\midrule 
\multicolumn{3}{c}{\tablename\ \thetable\ -- continued}\tabularnewline
\midrule 
$\widetilde{\boldsymbol{k}}$  & $\widetilde{\boldsymbol{k}}^{\dagger}$  & $\varphi^{\pi}(\theta(\widetilde{\boldsymbol{k}}))$ \tabularnewline
\midrule
\endhead
\midrule 
\multicolumn{3}{r}{Continued on next page}\tabularnewline
\endfoot
\endlastfoot
\midrule 
$\emptyset$  & $\emptyset$  & $F_{\emptyset}\otimes1$ \tabularnewline
$\bigl(\hat{1}\bigr)$  & $\bigl(\hat{1}\bigr)$  & $F_{\emptyset}\otimes b_{0}\allowbreak+F_{\emptyset}\otimes b_{1}$ \tabularnewline
$\bigl(\hat{2}\bigr)$  & $\bigl(1,\hat{1}\bigr)$  & $F_{\emptyset}\otimes b_{1}\allowbreak+F_{\emptyset}\otimes b_{2}$ \tabularnewline
$\bigl(2\bigr)$  & $\bigl(2\bigr)$  & $F_{2}\otimes1\allowbreak-F_{\emptyset}\otimes b_{1}\allowbreak-F_{\emptyset}\otimes b_{2}$ \tabularnewline
$\bigl(\hat{1},\hat{1}\bigr)$  & $\bigl(\hat{1},\hat{1}\bigr)$  & $F_{\emptyset}\otimes b_{1,0}\allowbreak+F_{\emptyset}\otimes b_{1,1}$ \tabularnewline
$\bigl(1,\hat{1}\bigr)$  & $\bigl(\hat{2}\bigr)$  & $F_{\emptyset}\otimes b_{1}\allowbreak+F_{\emptyset}\otimes b_{2}$ \tabularnewline
$\bigl(\hat{3}\bigr)$  & $\bigl(1,1,\hat{1}\bigr)$  & $F_{\emptyset}\otimes b_{2}\allowbreak+F_{\emptyset}\otimes b_{3}$ \tabularnewline
$\bigl(3\bigr)$  & $\bigl(1,2\bigr)$  & $F_{3}\otimes1\allowbreak-F_{\emptyset}\otimes b_{2}\allowbreak-F_{\emptyset}\otimes b_{3}$ \tabularnewline
$\bigl(\hat{1},\hat{2}\bigr)$  & $\bigl(1,\hat{1},\hat{1}\bigr)$  & $F_{\emptyset}\otimes b_{1,1}\allowbreak+F_{\emptyset}\otimes b_{1,2}$ \tabularnewline
$\bigl(\hat{1},2\bigr)$  & $\bigl(2,\hat{1}\bigr)$  & $F_{2}\otimes b_{0}\allowbreak+F_{2}\otimes b_{1}\allowbreak-F_{\emptyset}\otimes b_{1}\allowbreak-2\,F_{\emptyset}\otimes b_{2}\allowbreak-F_{\emptyset}\otimes b_{3}\allowbreak-F_{\emptyset}\otimes b_{1,0}\allowbreak-2\,F_{\emptyset}\otimes b_{1,1}\allowbreak-F_{\emptyset}\otimes b_{1,2}\allowbreak-F_{\emptyset}\otimes b_{2,0}\allowbreak-F_{\emptyset}\otimes b_{2,1}$ \tabularnewline
$\bigl(1,\hat{2}\bigr)$  & $\bigl(1,\hat{2}\bigr)$  & $-F_{2}\otimes b_{0}\allowbreak-F_{2}\otimes b_{1}\allowbreak+F_{\emptyset}\otimes b_{1}\allowbreak+3\,F_{\emptyset}\otimes b_{2}\allowbreak+2\,F_{\emptyset}\otimes b_{3}\allowbreak+F_{\emptyset}\otimes b_{1,0}\allowbreak+F_{\emptyset}\otimes b_{1,1}\allowbreak+F_{\emptyset}\otimes b_{2,0}\allowbreak+F_{\emptyset}\otimes b_{2,1}$ \tabularnewline
$\bigl(1,2\bigr)$  & $\bigl(3\bigr)$  & $F_{1,2}\otimes1\allowbreak-F_{\emptyset}\otimes b_{2}\allowbreak-F_{\emptyset}\otimes b_{3}$ \tabularnewline
$\bigl(\hat{2},\hat{1}\bigr)$  & $\bigl(\hat{1},1,\hat{1}\bigr)$  & $F_{\emptyset}\otimes b_{2,0}\allowbreak+F_{\emptyset}\otimes b_{2,1}$ \tabularnewline
$\bigl(2,\hat{1}\bigr)$  & $\bigl(\hat{1},2\bigr)$  & $F_{2}\otimes b_{0}\allowbreak+F_{2}\otimes b_{1}\allowbreak-F_{\emptyset}\otimes b_{1}\allowbreak-2\,F_{\emptyset}\otimes b_{2}\allowbreak-F_{\emptyset}\otimes b_{3}\allowbreak-F_{\emptyset}\otimes b_{1,0}\allowbreak-2\,F_{\emptyset}\otimes b_{1,1}\allowbreak-F_{\emptyset}\otimes b_{1,2}\allowbreak-F_{\emptyset}\otimes b_{2,0}\allowbreak-F_{\emptyset}\otimes b_{2,1}$ \tabularnewline
$\bigl(\hat{1},\hat{1},\hat{1}\bigr)$  & $\bigl(\hat{1},\hat{1},\hat{1}\bigr)$  & $F_{\emptyset}\otimes b_{1,1,0}\allowbreak+F_{\emptyset}\otimes b_{1,1,1}$ \tabularnewline
$\bigl(\hat{1},1,\hat{1}\bigr)$  & $\bigl(\hat{2},\hat{1}\bigr)$  & $F_{\emptyset}\otimes b_{2,0}\allowbreak+F_{\emptyset}\otimes b_{2,1}$ \tabularnewline
$\bigl(1,\hat{1},\hat{1}\bigr)$  & $\bigl(\hat{1},\hat{2}\bigr)$  & $F_{\emptyset}\otimes b_{1,1}\allowbreak+F_{\emptyset}\otimes b_{1,2}$ \tabularnewline
$\bigl(1,1,\hat{1}\bigr)$  & $\bigl(\hat{3}\bigr)$  & $F_{\emptyset}\otimes b_{2}\allowbreak+F_{\emptyset}\otimes b_{3}$ \tabularnewline
$\bigl(\hat{1},1,\hat{3}\bigr)$  & $\bigl(1,1,\hat{2},\hat{1}\bigr)$  & $-F_{3}\otimes b_{0,0}\allowbreak-F_{3}\otimes b_{0,1}\allowbreak-2\,F_{3}\otimes b_{1,0}\allowbreak-2\,F_{3}\otimes b_{1,1}\qquad\qquad\allowbreak-F_{2}\otimes b_{0,1}\allowbreak-F_{2}\otimes b_{0,2}\allowbreak-F_{2}\otimes b_{1,0}\allowbreak-3\,F_{2}\otimes b_{1,1}\allowbreak-2\,F_{2}\otimes b_{1,2}\allowbreak-F_{2}\otimes b_{2,0}\allowbreak-F_{2}\otimes b_{2,1}\allowbreak+F_{\emptyset}\otimes b_{1,1}\allowbreak+F_{\emptyset}\otimes b_{1,2}\allowbreak+2\,F_{\emptyset}\otimes b_{2,0}\allowbreak+6\,F_{\emptyset}\otimes b_{2,1}\allowbreak+5\,F_{\emptyset}\otimes b_{2,2}\allowbreak+F_{\emptyset}\otimes b_{2,3}\allowbreak+6\,F_{\emptyset}\otimes b_{3,0}\allowbreak+9\,F_{\emptyset}\otimes b_{3,1}\allowbreak+3\,F_{\emptyset}\otimes b_{3,2}\allowbreak+4\,F_{\emptyset}\otimes b_{4,0}\allowbreak+4\,F_{\emptyset}\otimes b_{4,1}\allowbreak+F_{\emptyset}\otimes b_{1,0,1}\allowbreak+F_{\emptyset}\otimes b_{1,0,2}\allowbreak+F_{\emptyset}\otimes b_{1,1,0}\allowbreak+4\,F_{\emptyset}\otimes b_{1,1,1}\allowbreak+3\,F_{\emptyset}\otimes b_{1,1,2}\allowbreak+2\,F_{\emptyset}\otimes b_{1,2,0}\allowbreak+3\,F_{\emptyset}\otimes b_{1,2,1}\allowbreak+F_{\emptyset}\otimes b_{1,2,2}\allowbreak+F_{\emptyset}\otimes b_{1,3,0}\allowbreak+F_{\emptyset}\otimes b_{1,3,1}\allowbreak+F_{\emptyset}\otimes b_{2,0,0}\allowbreak+2\,F_{\emptyset}\otimes b_{2,0,1}\allowbreak+F_{\emptyset}\otimes b_{2,0,2}\allowbreak+3\,F_{\emptyset}\otimes b_{2,1,0}\allowbreak+5\,F_{\emptyset}\otimes b_{2,1,1}\allowbreak+2\,F_{\emptyset}\otimes b_{2,1,2}\allowbreak+F_{\emptyset}\otimes b_{2,2,0}\allowbreak+F_{\emptyset}\otimes b_{2,2,1}\allowbreak+F_{\emptyset}\otimes b_{3,0,0}\allowbreak+F_{\emptyset}\otimes b_{3,0,1}\allowbreak+2\,F_{\emptyset}\otimes b_{3,1,0}\allowbreak+2\,F_{\emptyset}\otimes b_{3,1,1}$ \tabularnewline
$\bigl(1,1,\hat{2},\hat{1}\bigr)$  & $\bigl(\hat{1},1,\hat{3}\bigr)$  & $-F_{1,2}\otimes b_{0,0}\allowbreak-F_{1,2}\otimes b_{0,1}\allowbreak-2\,F_{1,2}\otimes b_{1,0}\allowbreak-2\,F_{1,2}\otimes b_{1,1}\allowbreak-F_{2}\otimes b_{0,1}\allowbreak-F_{2}\otimes b_{0,2}\allowbreak-F_{2}\otimes b_{1,0}\allowbreak-3\,F_{2}\otimes b_{1,1}\allowbreak-2\,F_{2}\otimes b_{1,2}\allowbreak-F_{2}\otimes b_{2,0}\allowbreak-F_{2}\otimes b_{2,1}\allowbreak+F_{\emptyset}\otimes b_{1,1}\allowbreak+F_{\emptyset}\otimes b_{1,2}\allowbreak+2\,F_{\emptyset}\otimes b_{2,0}\allowbreak+6\,F_{\emptyset}\otimes b_{2,1}\allowbreak+5\,F_{\emptyset}\otimes b_{2,2}\allowbreak+F_{\emptyset}\otimes b_{2,3}\allowbreak+6\,F_{\emptyset}\otimes b_{3,0}\allowbreak+9\,F_{\emptyset}\otimes b_{3,1}\allowbreak+3\,F_{\emptyset}\otimes b_{3,2}\allowbreak+4\,F_{\emptyset}\otimes b_{4,0}\allowbreak+4\,F_{\emptyset}\otimes b_{4,1}\allowbreak+F_{\emptyset}\otimes b_{1,0,1}\allowbreak+F_{\emptyset}\otimes b_{1,0,2}\allowbreak+F_{\emptyset}\otimes b_{1,1,0}\allowbreak+4\,F_{\emptyset}\otimes b_{1,1,1}\allowbreak+3\,F_{\emptyset}\otimes b_{1,1,2}\allowbreak+2\,F_{\emptyset}\otimes b_{1,2,0}\allowbreak+3\,F_{\emptyset}\otimes b_{1,2,1}\allowbreak+F_{\emptyset}\otimes b_{1,2,2}\allowbreak+F_{\emptyset}\otimes b_{1,3,0}\allowbreak+F_{\emptyset}\otimes b_{1,3,1}\allowbreak+F_{\emptyset}\otimes b_{2,0,0}\allowbreak+2\,F_{\emptyset}\otimes b_{2,0,1}\allowbreak+F_{\emptyset}\otimes b_{2,0,2}\allowbreak+3\,F_{\emptyset}\otimes b_{2,1,0}\allowbreak+5\,F_{\emptyset}\otimes b_{2,1,1}\allowbreak+2\,F_{\emptyset}\otimes b_{2,1,2}\allowbreak+F_{\emptyset}\otimes b_{2,2,0}\allowbreak+F_{\emptyset}\otimes b_{2,2,1}\allowbreak+F_{\emptyset}\otimes b_{3,0,0}\allowbreak+F_{\emptyset}\otimes b_{3,0,1}\allowbreak+2\,F_{\emptyset}\otimes b_{3,1,0}\allowbreak+2\,F_{\emptyset}\otimes b_{3,1,1}$ \tabularnewline
$\bigl(2,4\bigr)$  & $\bigl(1,1,2,2\bigr)$  & $F_{2,4}\otimes1\allowbreak-F_{4}\otimes b_{0}\allowbreak-3\,F_{4}\otimes b_{1}\allowbreak-2\,F_{4}\otimes b_{2}\allowbreak-2\,F_{3}\otimes b_{1}\allowbreak-4\,F_{3}\otimes b_{2}\allowbreak-2\,F_{3}\otimes b_{3}\allowbreak-3\,F_{2}\otimes b_{2}\allowbreak-7\,F_{2}\otimes b_{3}\allowbreak-4\,F_{2}\otimes b_{4}\allowbreak+6\,F_{\emptyset}\otimes b_{3}\allowbreak+23\,F_{\emptyset}\otimes b_{4}\allowbreak+28\,F_{\emptyset}\otimes b_{5}\allowbreak+11\,F_{\emptyset}\otimes b_{6}\allowbreak+3\,F_{\emptyset}\otimes b_{1,2}\allowbreak+10\,F_{\emptyset}\otimes b_{1,3}\allowbreak+11\,F_{\emptyset}\otimes b_{1,4}\allowbreak+4\,F_{\emptyset}\otimes b_{1,5}\allowbreak+2\,F_{\emptyset}\otimes b_{2,1}\allowbreak+7\,F_{\emptyset}\otimes b_{2,2}\allowbreak+9\,F_{\emptyset}\otimes b_{2,3}\allowbreak+4\,F_{\emptyset}\otimes b_{2,4}\allowbreak+F_{\emptyset}\otimes b_{3,0}\allowbreak+5\,F_{\emptyset}\otimes b_{3,1}\allowbreak+6\,F_{\emptyset}\otimes b_{3,2}\allowbreak+2\,F_{\emptyset}\otimes b_{3,3}\allowbreak+F_{\emptyset}\otimes b_{4,0}\allowbreak+3\,F_{\emptyset}\otimes b_{4,1}\allowbreak+2\,F_{\emptyset}\otimes b_{4,2}$ \tabularnewline
$\bigl(1,1,2,2\bigr)$  & $\bigl(2,4\bigr)$  & $F_{1,1,2,2}\otimes1\allowbreak-F_{1,1,2}\otimes b_{0}\allowbreak-3\,F_{1,1,2}\otimes b_{1}\allowbreak-2\,F_{1,1,2}\otimes b_{2}\allowbreak-2\,F_{1,2}\otimes b_{1}\allowbreak-4\,F_{1,2}\otimes b_{2}\allowbreak-2\,F_{1,2}\otimes b_{3}\allowbreak-3\,F_{2}\otimes b_{2}\allowbreak-7\,F_{2}\otimes b_{3}\allowbreak-4\,F_{2}\otimes b_{4}\allowbreak+6\,F_{\emptyset}\otimes b_{3}\allowbreak+23\,F_{\emptyset}\otimes b_{4}\allowbreak+28\,F_{\emptyset}\otimes b_{5}\allowbreak+11\,F_{\emptyset}\otimes b_{6}\allowbreak+3\,F_{\emptyset}\otimes b_{1,2}\allowbreak+10\,F_{\emptyset}\otimes b_{1,3}\allowbreak+11\,F_{\emptyset}\otimes b_{1,4}\allowbreak+4\,F_{\emptyset}\otimes b_{1,5}\allowbreak+2\,F_{\emptyset}\otimes b_{2,1}\allowbreak+7\,F_{\emptyset}\otimes b_{2,2}\allowbreak+9\,F_{\emptyset}\otimes b_{2,3}\allowbreak+4\,F_{\emptyset}\otimes b_{2,4}\allowbreak+F_{\emptyset}\otimes b_{3,0}\allowbreak+5\,F_{\emptyset}\otimes b_{3,1}\allowbreak+6\,F_{\emptyset}\otimes b_{3,2}\allowbreak+2\,F_{\emptyset}\otimes b_{3,3}\allowbreak+F_{\emptyset}\otimes b_{4,0}\allowbreak+3\,F_{\emptyset}\otimes b_{4,1}\allowbreak+2\,F_{\emptyset}\otimes b_{4,2}$ \tabularnewline
\end{longtable}

\end{document}